\numberwithin{equation}{subsection}
\newcommand{\Cof}{\mathcal{C}of}
\newcommand{\Fib}{\mathcal{F}ib}
\newcommand{\GG}{\mathfrak{G}}
\newcommand{\II}{\mathbb{I}}
\def\B{\mathbb{B}}
\def\C{\mathcal{C}}
\def\D{\mathcal{D}}
\def\G{\mathcal{G}}
\def\F{\mathcal{F}}
\def\SS{\mathcal{S}}
\def\M{\mathcal{M}}
\def\N{\mathcal{N}}
\def\W{\mathcal{W}}
\def\I{\mathcal{I}}
\def\SS{\mathcal{S}}
\def\P{\mathcal{P}}
\def\J{\mathcal{J}}
\def\R{\mathcal{R}}
\renewcommand{\L}{\mathcal{L}}
\def\U{\mathcal{U}}
\def\V{\mathcal{V}}
\def\W{\mathcal{W}}
\def\O{\mathcal{O}}
\def\alp{{\alpha}}
\def\gam{{\gamma}}
\def\del{\delta}
\def\eps{{\varepsilon}}
\def\sig{{\sigma}}
\def\Del{{\Delta}}
\def\Sig{{\Sigma}}
\def\vphi{\varphi}
\newcommand{\HSwarrow}{\kern0.05ex\vcenter{\hbox{\Huge\ensuremath{\Swarrow}}}\kern0.05ex}
\newcommand{\hSwarrow}{\kern0.05ex\vcenter{\hbox{\huge\ensuremath{\Swarrow}}}\kern0.05ex}
\newcommand{\LLSwarrow}{\kern0.05ex\vcenter{\hbox{\LARGE\ensuremath{\Swarrow}}}\kern0.05ex}
\newcommand{\LSwarrow}{\kern0.05ex\vcenter{\hbox{\Large\ensuremath{\Swarrow}}}\kern0.05ex}
\newcommand{\HSearrow}{\kern0.05ex\vcenter{\hbox{\Huge\ensuremath{\Searrow}}}\kern0.05ex}
\newcommand{\hSearrow}{\kern0.05ex\vcenter{\hbox{\huge\ensuremath{\Searrow}}}\kern0.05ex}
\newcommand{\LLSearrow}{\kern0.05ex\vcenter{\hbox{\LARGE\ensuremath{\Searrow}}}\kern0.05ex}
\newcommand{\LSearrow}{\kern0.05ex\vcenter{\hbox{\Large\ensuremath{\Searrow}}}\kern0.05ex}
\newcommand{\HDownarrow}{\kern0.05ex\vcenter{\hbox{\Huge\ensuremath{\Downarrow}}}\kern0.05ex}
\newcommand{\hDownarrow}{\kern0.05ex\vcenter{\hbox{\huge\ensuremath{\Downarrow}}}\kern0.05ex}
\newcommand{\LLDownarrow}{\kern0.05ex\vcenter{\hbox{\LARGE\ensuremath{\Downarrow}}}\kern0.05ex}
\newcommand{\LDownarrow}{\kern0.05ex\vcenter{\hbox{\Large\ensuremath{\Downarrow}}}\kern0.05ex}
\newcommand{\HUparrow}{\kern0.05ex\vcenter{\hbox{\Huge\ensuremath{\Uparrow}}}\kern0.05ex}
\newcommand{\hUparrow}{\kern0.05ex\vcenter{\hbox{\huge\ensuremath{\Uparrow}}}\kern0.05ex}
\newcommand{\LLUparrow}{\kern0.05ex\vcenter{\hbox{\LARGE\ensuremath{\Uparrow}}}\kern0.05ex}
\newcommand{\LUparrow}{\kern0.05ex\vcenter{\hbox{\Large\ensuremath{\Uparrow}}}\kern0.05ex}
\newtheorem{thm}{Theorem}[subsection]
\newtheorem{cor}[thm]{Corollary}
\newtheorem{lem}[thm]{Lemma}
\newtheorem{pro}[thm]{Proposition}
\theoremstyle{definition}
\newtheorem{define}[thm]{Definition}
\newtheorem{example}[thm]{Example}
\newtheorem{defn}[thm]{Definition}
\newtheorem{notn}[thm]{Notation}
\newtheorem{obs}[thm]{Observation}
\theoremstyle{remark}
\newtheorem{rem}[thm]{Remark}
\DeclareMathOperator{\colim}{colim}
\DeclareMathOperator{\Id}{Id}
\DeclareFontFamily{OT1}{pzc}{}
\DeclareFontShape{OT1}{pzc}{m}{it}{<-> s * [1.10] pzcmi7t}{}
\DeclareMathAlphabet{\mathpzc}{OT1}{pzc}{m}{it}
\DeclareMathOperator{\cof}{cof}
\DeclareMathOperator{\fib}{fib}
\DeclareMathOperator{\ad}{ad}
\DeclareMathOperator{\res}{res}
\DeclareMathOperator{\Alg}{Alg}
\DeclareMathOperator{\LMod}{LMod}
\DeclareMathOperator{\RMod}{RMod}
\DeclareMathOperator{\ModCat}{ModCat}
\DeclareMathOperator{\sGr}{sGr}
\DeclareMathOperator{\op}{op}
\DeclareMathOperator{\df}{def}
\DeclareMathOperator{\Set}{Set}
\DeclareMathOperator{\RelCat}{RelCat}
\DeclareMathOperator{\Cat}{Cat}
\DeclareMathOperator{\CAlg}{CAlg}
\DeclareMathOperator{\Mod}{Mod}
\DeclareMathOperator{\Ch}{Ch}
\DeclareMathOperator{\Sym}{Sym}
\DeclareMathOperator{\Fun}{Fun}
\DeclareMathOperator{\AdjCat}{AdjCat}
\DeclareMathOperator{\Obj}{Obj}
\DeclareMathOperator{\BiFib}{biCar}
\DeclareMathOperator{\ModFib}{ModFib}
\DeclareMathOperator{\PR}{PR}
\DeclareMathOperator{\GFib}{Car}
\DeclareMathOperator{\OpFib}{coCar}
\DeclareMathOperator{\Ob}{Ob}
\def\x{\overset}
\def\Hom{\textrm{Hom}}
\newcommand{\tgpd}{\kern0.05ex\vcenter{\hbox{\footnotesize\ensuremath{2}}}\kern0.05ex\mathcal{G}pd} 
\def\rar{\rightarrow}
\def\lrar{\longrightarrow}
\def\hrar{\hookrightarrow}
\def\thrar{\twoheadrightarrow}
\def\ovl{\overline}
\newcommand\ackname{\textbf{Acknowledgements}}
  \newenvironment{acknowledgements}{
      \titlepage
      \null\vfil
      \@beginparpenalty\@lowpenalty
      \begin{center}
        \bfseries \ackname\
        \@endparpenalty\@M
      \end{center}}
     {\par\vfil\null\endtitlepage}
\title{The Grothendieck construction for model categories}
\author{Yonatan Harpaz \;\;\;\; Matan Prasma}
\date{}
\begin{document}
\maketitle

\begin{abstract}
The Grothendieck construction is a classical correspondence between diagrams of categories and coCartesian fibrations over the indexing category. In this paper we consider the analogous correspondence in the setting of model categories. As a main result, we establish an equivalence between suitable diagrams of model categories indexed by $\M$ and a new notion of \textbf{model fibrations} over $\M$. When $\M$ is a model category, our construction endows the Grothendieck construction with a model structure which gives a presentation of Lurie's $\infty$-categorical Grothendieck construction and enjoys several good formal properties. We apply our construction to various examples, yielding model structures on strict and weak group actions and on modules over algebra objects in suitable monoidal model categories.

\end{abstract}

\tableofcontents

\section{Introduction}

In the $2$-category of categories, the lax colimit of a functor $\F: \C \lrar \Cat$ is represented by its \textbf{Grothendieck construction} $\int_\C \F$. Furthermore, the classical Grothendieck correspondence asserts that the association $\F \mapsto \int_\C \F$ gives rise to an equivalence of $(2,1)$-categories between functors $\C \lrar \Cat$ and coCartesian fibrations of categories $\D \lrar \C$. 

According to~\cite[\S I.5]{Ieke}, this construction was first used for diagrams of sets by Yoneda. It was later developed in full generality by Grothendieck in~\cite{SGA1} and became a key tool in studying categories which ``vary in families''. A prominent example is Grothendieck's original application to the study of categories of (quasi-)coherent sheaves which vary over the category of schemes (see~\cite{SGA1}).

In the world of $\infty$-categories, Lurie's straightening and unstraightening functors give the analogous correspondence. In this setting, the role of coCartesian fibrations is even more prominent, as it enables one to study homotopy coherent diagrams of $\infty$-categories without having to explicitly describe the vast net of coherences. One can also reinterpret in these terms various classical results, such as the classification theory of vector bundles. Recent work of Gepner, Haugseng and Nikolaus (\cite[Theorem 1.1]{GHN}) shows that this $\infty$-categorical Grothendieck construction is indeed a model for the $\infty$-categorical lax colimit. 

From this point of view, it is natural to consider the case where one is given a diagram $\F: \C \lrar \ModCat$ of \textbf{model categories}, and see if it is possible to analyse $\int_\C \F$ in model categorical terms, in a way that reflects the underlying diagram of $\infty$-categories. We note that in general, given a diagram of model categories $\F: \C \lrar \ModCat$, the Grothendieck construction $\int_\C\F$ need not even be bicomplete, and so one should not expect $\int_\C\F$ to carry any model structure. In this paper we will consider two possible solutions to this difficulty. One solution will be to consider the case where the indexing category is itself a model category $\M$. In such a setting, it is natural to assume that the diagram $\F:\M\lrar \ModCat$ is \textbf{relative} i.e. sends weak equivalences in $\M$ to Quillen equivalences. We will furthermore assume that $\F$ satisfied one additional property, which we refer to as $\F$ being ``proper" (see Definition~\ref{d:proper}). We will then show that under these assumptions, the category $\int_\M \F$ can be endowed with a natural model structure (see Theorem~\ref{model structure}), extending the model structures of the fibers, such that the canonical projection
$$ \int_\M \F \lrar \M $$
is both a left and a right Quillen functor. In \S\ref{ss:infinity-cat pov} we will prove that the map of the underlying $\infty$-categories induced by our construction coincides with that obtained from Lurie's $\infty$-categorical Grothendieck construction. By~\cite{GHN} this implies that our integral model category is also a model for the lax colimit of the corresponding diagram of $\infty$-categories.

This universal property above suggests that the integral model structure should be invariant (up to Quillen equivalence) under replacing $(\M,\F)$ with a suitably equivalent pair. We will prove in~\ref{s:invariance} that this is indeed the case (see Theorem~\ref{qa}).

A second approach taken up in this paper is to generalize the notion of a model category to a \textbf{relative setting}, in which case $\M$ need only carry three distinguished classed of maps (see \S\ref{s:model-fib}). Our end result is an \textbf{equivalence of $(2,1)$-categories} between proper relative diagrams $\F:\M \lrar \ModCat$ (in the formal sense) and a suitable notion of \textbf{model fibrations} $\N \lrar \M$ (see Theorem~\ref{t:equiv}). We consider this result as the model categorical analogue of the classical \textbf{Grothendieck's correspondence}. In particular, this result gives a substantial justification to our assumptions that $\F$ is proper and relative. We believe that the notions of model fibrations and relative model categories are interesting in their own right and may prove useful, for example in setting up a ``model category of model categories".

Theorem~\ref{model structure} seems to be widely applicable. We will demonstrate this in \S\ref{s:examples} by considering several classes of examples, ranging from slice and coslice categories to various algebras and their module categories. We will also consider an example for the invariance mentioned above in the case of strict versus weak group actions.

\subsection{Relation to other works}
Model structures on Grothendieck constructions were studied before, with the example of the fibred category of enriched categories as a main application. The first of these was in~\cite{Ro}, which was later corrected by~\cite{St}. In~\cite{St}, a model structure on $\int_\M \F$ is constructed under rather restrictive conditions on $\F$ and without the assumption that $\F$ is relative. As a result, the construction in~\cite{St} cannot be applied to most of the examples described in this paper (e.g. the slice construction, see \S\ref{ss:slice}), and does not enjoy the invariance property established in Theorem~\ref{qa}. Note that when $\F$ is relative the conditions of~\cite[Theorem $2.3$]{St} are strictly stronger than the assumptions of Theorem~\ref{model structure}. Moreover, when these conditions are satisfied, the two model structures agree.

We remark that the Grothendieck construction has also appeared in works on \textbf{lax limits} of model categories, where one is interested in constructing a model structure on the category of \textbf{sections} $\C \lrar \int_\C\F$. This was first worked out in~\cite{HS} when $\C$ is a Reedy category. It was later generalized by~\cite{Bar} to an arbitrary category $\C$ under suitable assumptions on $\F(c)$ for $c \in \C$. See also~\cite{To},~\cite{GS} and~\cite{Be}. This topic will not be addressed in this paper.

\subsection{Future work}
There are natural questions which arise from this work.
\begin{enumerate}
\item
What reasonable conditions on $\M$ and $\F$ assure that $\int_\M\F$ is cofibrantly generated, proper, etc., or is such relatively to $\M$?

\item
Assume that $\M$ is a simplicial model category. Is there a good notion of a simplicial functor from $\M$ to the 2-category of simplicial model categories which yields a good theory of a simplicial Grothendieck construction?
\item
How does the theory of model fibrations interact with other aspects of model category theory. For example, can it be used to compute homotopy limits of model categories in certain situations?

\end{enumerate}

\subsection{Organization }
This paper is organized as follows. We start in \S\ref{s:prelim} with various categorical preliminaries. In \S\ref{ss:Groth} we recall the Grothendieck construction and establish a toolkit of terminology. In \S\ref{ss:adj} and \S\ref{ss:left-right} we setup the basic relations between the Grothendieck construction and biCartesian fibrations in the $2$-categorical framework of categories and adjunctions. In \S\ref{ss:bicom} we give sufficient conditions for the Grothendieck construction to be (relatively) bicomplete (see Proposition~\ref{p:cocomplete}). Finally, in \S\ref{ss:model} we fix notations and terminology for the $2$-category of model categories.

In \S\ref{s:integral} we formulate the notion of a \textbf{proper relative} functor $\F: \M \lrar \ModCat$ and prove the existence of the integral model structure on $\int_\M\F$ for such functors (Theorem~\ref{model structure}). Using a result of Hinich we will show in \ref{ss:infinity-cat pov} that our construction gives a model-presentation of Lurie's $\infty$-categorical Grothendieck construction. In \S\ref{s:invariance} we will verify that this model structure is functorial and invariant under natural equivalences and suitable base changes (see Theorem~\ref{qa}). We will also prove that the integral model structure is well-behaved under iteration (see~\ref{p:fubini}).

In \S\ref{s:model-fib} we observe that the notion of a proper relative functor makes sense for a general category $\M$, as long as it is equipped with three distinguished classed of morphisms $\W_\M,\Cof_\M,\Fib_\M$. In this case, the Grothendieck construction cannot be, in general, a model category. However, the model category axioms will hold for $\int_\F \M$ in a \textbf{relatively} to $\M$. This leads naturally to the notions of \textbf{relative model category} and \textbf{model fibration} (see Definitions~\ref{d:relative-model} and Definition~\ref{d:model-fib}). The main result of this section is that the relative integral model structure induced an equivalence of $(2,1)$-categories between proper relative functors $\M \lrar \ModFib$ and model fibrations $\N \lrar \M$.

In \S\ref{s:examples} we will consider various classes of examples. We will begin in \S\ref{ss:slice} with the basic examples of slice and coslice categories. We then continue in \S\ref{ss:group-act} by organizing strict and weak group actions into suitable model fibrations and establishing a Quillen equivalence between them using Theorem~\ref{qa}. In \S\ref{ss:alg} we show that under the hypothesis of~\cite{SS} (see Definition~\ref{d:SS-assume}), a symmetric monoidal model category yields a proper relative functor given by associating to each associative algebra object its category of modules. In particular, this shows that most model structures for spectra (see Example~\ref{e:many}) are admissible with respect to the 2-coloured operad of algebras and modules. To the best of the authors' knowledge, this was only known for the positive model structure on symmetric spectra (\cite{EM}) and the positive model structure on orthogonal spectra (\cite{Kro}). Finally, in \S\ref{ss:calg} we will establish the validity of the analogous construction for \textbf{commutative algebras}, under suitable hypothesis.

\newpage 

\begin{ackname}
We would like to thank Michael Batanin, Ross Street, Geoffroy Horel and Tomer Schlank for useful discussions and suggestions.

The authors were supported by the Dutch Science Foundation (NWO), grants SPI 61-638 and 62001604 (respectively).  
\end{ackname}

\section{Preliminaries }\label{s:prelim}

Unless otherwise stated, a category will always mean a large category and a $2$-category will always mean a \textbf{very large weak 2-category}. If $\C$ is a category and $\D$ is a $2$-category, a functor $\C \lrar \D$ will always mean a \textbf{pseudo-functor}. We will denote by $\Fun(\C,\D)$ the (very large) 2-category of (pseudo-)functors, psuedo-natural transformations, and modifications.

\subsection{The Grothendieck construction }\label{ss:Groth}

We shall begin with the basic definitions. Let $p:\D\rar \C$ be a functor. A morphism $\phi:x\rar y$ in $\D$ is called \textbf{$p$-Cartesian} if for every object
$u\in \D$ and every pair of morphisms $\psi:u\rar y$ and $g:p(u)\rar p(x)$ such that $p(\psi)=p(\phi)g$ there exists a unique morphism $\gam:u\rar x$ such that $\phi \gam =\psi$.
$$\xymatrix@=0.95pc{u\ar@{--}[ddd]\ar@{.>}[ddr]_{\exists !\gam}\ar@/^/[ddrrr]^\psi & & \\ &&& \\ & x\ar[rr]_{\phi}\ar@{--}[dd] & & y\ar@{--}[dd] \\p(u)\ar[dr]_g\ar@/^/[drrr]^{p(\psi)} & & \\ & p(x)\ar[rr]_{p(\phi)} & & p(y).}$$
Dually, a morphism $\phi:x\rar y$ in $\D$ is called \textbf{$p$-coCartesian} if for every object $v\in \D$ and every pair of morphisms $\psi:x\rar v$ and $g:p(y)\rar p(v)$ such that $gp(\phi)=p(\psi)$ there exists a unique morphism $\gam: y\rar v$
such that $\gam \phi=\psi$.
$$\xymatrix@=0.95pc{& & & v\ar@{--}[ddd]\\ & & \\ x\ar[rr]_{\phi}\ar@{--}[dd]\ar@/^/[uurrr]^{\psi} & & y\ar@{--}[dd]\ar@{.>}[uur]_{\exists ! \gam} \\ & & & p(v)\\ p(x)\ar@/^/[urrr]^{p(\psi)}\ar[rr]_{p(\phi)} &  & p(y)\ar[ur]_g.}$$

\begin{rem}\label{r:unique}
The uniqueness property of factorizations along $p$-coCartesian morphisms has the following direct implication. If $\phi: x \lrar y$ in $\D$ is $p$-coCartesian and $\gam,\gam': y \lrar z$ are morphisms such that $\gam \circ \phi = \gam' \circ \phi$ and $p(\gam) = p(\gam')$ then $\gam = \gam'$. The analogous statement for $p$-Cartesian morphisms holds as well.
\end{rem}

A functor $p:\D\rar \C$ is a \textbf{Cartesian fibration} if for every object $y\in \D$ and every morphism $f$ with target $p(y)$ there exist a $p$-Cartesian morphism $\phi$ with target $y$ such that $p(\phi)=f$.
$$\xymatrix@=0.95pc{\circ\ar@{--}[d]\ar[rr]^{\phi} & & y\ar@{--}[d]\\\circ\ar[rr]^{f=p(\phi)} && p(y).}$$
Dually, a functor $p:\D\rar \C$ is called a \textbf{coCartesian fibration} if for every object $x\in \D$ and every morphism $f$ with source $p(x)$ there exist a $p$-coCartesian morphism $\phi$ with source $x$ such that $f=p(\phi)$.
$$\xymatrix@=0.95pc{x\ar@{--}[d]\ar[rr]^{\phi} & & \circ\ar@{--}[d]\\p(x)\ar[rr]^{f=p(\phi)} && \circ.}$$
A functor $p:\D\rar \C$ is called a \textbf{biCartesian fibration} if it is both a Cartesian and a coCartesian fibration.

We will denote by $\GFib(\C)$ (resp. $\OpFib(\C)$) the $2$-category which has as objects the Cartesian (resp. coCartesian) fibrations over $\C$ and as morphisms the triangles
$$\xymatrix{\D\ar[dr]_{p} \ar[rr]^{\Phi} && \D'\ar[dl]^{p'}\\& \C &}$$
such that $\Phi$ sends $p$-Cartesian (resp. $p$-coCartesian) morphisms to $p'$-Cartesian (resp. $p'$-coCartesian) morphisms. The $2$-morphisms are natural transformations $\nu:\Phi\Rightarrow \Psi$ such that for every object $x\in \D$, $p'(\nu_x)=id_{p(x)}$.

\begin{rem}
The definition of a Cartesian fibration $\D \lrar \C$ is not invariant under replacing $\C$ with an equivalent category $\C' \simeq \C$. However, the category $\GFib(\C)$ \textbf{is} invariant under such equivalences. One possible way of making the definition itself invariant is to work with the equivalent notion of a \textbf{Street fibrations} (see~\cite{Str}). A similar solution may be applied to coCartesian fibrations.
\end{rem}

\begin{define}
Let $\F:\C\rar \Cat$ be a functor. The \textbf{Grothendieck construction} of $\F$ is the category $\int_\C\F$ defined as follows. An object of $\int_{\C} \F$ is a pair $(a,x)$ with $a\in \Obj\C$ and $x \in \Obj \F(a)$. A morphism $(a,x) \lrar (a',x')$ is a pair $(f,\phi)$ with $f:a\rar a'$ a morphism in $\C$ and $\phi:\F(f)(x)\rar x'$ a morphism in $F(a')$.
\end{define}

The Grothendieck construction carries a canonical functor
$$ p:\int_\C \F \lrar \C $$
given by the projection $(a,x) \mapsto a$, which is a coCartesian fibration. A $p$-coCartesian lift of $f: a \lrar b$ starting at $(a,x)$ if given by the morphism $(f,\Id):(a,x) \lrar (f(a),\F(f)(x))$. The association $\F \mapsto \int_\C\F$ determines an equivalence of $2$-categories
$$ \int :\Fun(\C,\Cat)\overset{\simeq}{\rar} \OpFib(\C) .$$
Dually, for a functor $\F:\C^{op}\rar \Cat$ we get a Cartesian fibration
$$ \pi:\int_{\C^{op}}\F\rar \C $$
yielding an equivalence of $2$-categories
$$ \int :\Fun(\C^{op},\Cat) \overset{\simeq}{\rar} \GFib(\C). $$

\subsection{Adjunctions and biCartesian fibrations}\label{ss:adj}

Recall that a $2$-category (see \cite{Mac}[XII \S 3]) is called a $(2,1)$-category if all $2$-morphisms are invertible. 
For any $2$-category $\C$ one can consider the associated $(2,1)$-category consisting of the same objects and $1$-morphisms but only the invertible $2$-morphisms of $\C$. Clearly, a $2$-functor $\C\lrar \D$ induces a functor between the associated $(2,1)$-categories. 
Following \cite{Mac}[IV \S 7-8] we will consider the $(2,1)$-category $\AdjCat$ of \textbf{adjunctions} defined as follows.
An object of $\AdjCat$ is a category $\C$, and a morphism from $\C$ to $\D$ is an adjunction
$\xymatrix{\C\ar[r]<1ex>^f & \D\ar[l]<1ex>_(0.55){\upvdash}^u}$. A $2$-morphism
$$ (\sig,\tau) : f \dashv u \Rightarrow f' \dashv u' $$
also called a \textbf{pseudo-transformation of adjunctions}, is a pair of natural \textbf{isomorphisms} $\sigma: f\Rightarrow f'$ and $\tau:u'\Rightarrow u$ such that for every $c\in \C,\;d\in \D$ the
following square of sets commutes
$$\xymatrix{\C(c,u'(d))\ar[r]^{\cong}\ar[d]_{\C(c,\tau_d)} &\D(f'(c),d)\ar[d]^{\D(\sigma_{c},d)}\\
\C(c,u(d))\ar[r]^{\cong} & \D(f(c),d)}$$
Vertical (resp. horizontal) composition is defined via term-wise vertical (resp. horizontal) composition of natural transformations. 
Given a category $\I$, we may consider the category $\Fun(\I,\AdjCat)$ which has as objects the functors $\I \lrar \AdjCat$ and as morphisms the pseudo-natural transformations. Explicitly, given $\F,\G:\I\rar \AdjCat$ a \textbf{pseudo-natural transformation} $\Phi:\F\Rightarrow \G$ assigns for each $i\in \I$ an adjunction $\Phi(i):\xymatrix{\F(i)\ar[r]<1ex>^g & \G(i)\ar[l]<1ex>_(0.55){\upvdash}^v}$ and for each arrow $\alpha:i\rar i'\in \I$, a pseudo-transformation of adjunctions $\Phi(i)=(\sigma:g'f\overset{\cong}{\Rightarrow} f'g ;\;\tau:vu'\overset{\cong}{\Rightarrow} uv')$ rendering the following square commutative up to isomorphism
$$\xymatrix{\F(i)\ar[r]<1ex>^f\ar[d]<-1ex>_g & \F(i')\ar[l]<1ex>_(0.5){\upvdash}^{\overset{u}{\LSwarrow}}\ar[d]<1ex>^{g'} \\ \G(i)\ar[r]<-1ex>_{f'}\ar[u]<-1ex>^(0.5){\leftvdash}_{v} & \G(i')\ar[l]<-1ex>^(0.5){\downvdash}_{u'}\ar[u]<1ex>_(0.5){\vdash}^{v'};}$$ and this data is subject to coherence conditions (see \cite{Gra}). Composition of pseudo-natural transformations is defined in the obvious way.

Let $U_L: \AdjCat \lrar \Cat$ be the $(2,1)$-functor which is identity on objects and associates to each $1$-morphism
$$
\xymatrix{
\C\ar[r]<1ex>^f & \D\ar[l]<1ex>_(0.55){\upvdash}^u
}$$
The left functor $f: \C \lrar \D$, and to each $2$-morphism $ (\sig,\tau) : f \dashv u \Rightarrow f' \dashv u' $ the pseudo-natural transformation $\sig: f \Rightarrow f'$. Similarly, let us denote by $U_R: \AdjCat \lrar \Cat^{\op}$ the $(2,1)$-functor which associates to each adjunction its right functor and to each $2$-morphism $(\sig,\tau)$ the right part $\tau$. The $(2,1)$-functors $U_L$ and $U_R$ are \textbf{faithful}, in the sense that for each pair of categories $\C,\D$ the induced functors
$$ (U_L)_*: \AdjCat(\C,\D) \lrar \Cat(\C,\D) $$
and
$$ (U_R)_*: \AdjCat(\C,\D) \lrar \Cat(\D,\C) $$
are \textbf{fully-faithful}. Similarly, for every small category $\I$, we have induced faithful $(2,1)$-functors
$$ (U_L)^{\I}: \AdjCat^{\I} \lrar \Cat^{\I} $$
and
$$ (U_R)^{\I}: \AdjCat^{\I} \lrar (\Cat^{\op})^{\I} = \Cat^{\I^{\op}} $$
We will denote by $\BiFib(\I)$ the $2$-category of biCartesian fibration over $\I$. The morphisms in $\BiFib(\I)$ are given by adjunctions
$$\xymatrix{
\J\ar[dr]_{p} \ar[rr]<1ex>^{\Phi} && \J'\ar[dl]^{p'}\ar^{\Psi}[ll]<1ex>_{\upvdash}\\& \I &}$$
such that $\Phi$ preserves coCartesian morphisms and $\Psi$ preserves Cartesian morphisms. The $2$-morphisms are given by psuedo-transformations of adjunctions. One then has similar left/right forgetful functors
$$ V_L: \BiFib(\I) \lrar \OpFib(\I) $$
and
$$ V_R: \BiFib(\I) \lrar \GFib(\I) $$
which are again faithful (this is because the right adjoint of any functor that preserves coCartesian morphisms preserves Cartesian morphisms and vice versa). One then has a natural commutative diagram
$$ \xymatrix{
& \Cat^{\I} \ar_(0.4){\simeq}^{\int_{\I}}[r]& \OpFib(\I) \\
\AdjCat^{\I} \ar^{\int_{\I}}[r]\ar^{U^{\I}_L}[ur]\ar_{U^{\I}_R}[dr] & \BiFib(\I) \ar_{V_L}[ur]\ar^{V_R}[dr] & \\
& \Cat^{\I^{\op}} \ar^{\simeq}_{\int_{\I^{\op}}}[r] & \GFib(\I) \\
}$$

The following proposition seems to be well known to experts, but we were not able to find a proof in the literature.
\begin{pro}\label{p:bifib}
The $(2,1)$-functor
$$ \int :\Fun(\I,\AdjCat)\overset{\simeq}{\lrar} \BiFib(\I) .$$
is an equivalence of $(2,1)$-categories.
\end{pro}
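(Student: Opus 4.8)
The plan is to exploit the two faithful forgetful functors $U^{\I}_L \colon \AdjCat^{\I} \lrar \Cat^{\I}$ and $U^{\I}_R \colon \AdjCat^{\I} \lrar \Cat^{\I^{\op}}$ together with the classical Grothendieck equivalences $\int_{\I} \colon \Cat^{\I} \simeq \OpFib(\I)$ and $\int_{\I^{\op}} \colon \Cat^{\I^{\op}} \simeq \GFib(\I)$, which we may assume. The key point is that the whole structure of an object of $\Fun(\I,\AdjCat)$ — a pseudo-functor valued in adjunctions — is recorded by the pair consisting of its left part (a pseudo-functor $\I \lrar \Cat$) and its right part (a pseudo-functor $\I^{\op} \lrar \Cat$), subject to the compatibility that, fiberwise over each $i \in \I$, the two functors form an adjunction and, over each arrow, the transition data are pseudo-transformations of adjunctions. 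Under $\int$, the left part becomes a coCartesian fibration, the right part a Cartesian fibration, and these two fibrations have the same total category $\N$ and projection $p$; the fiberwise adjunctions glue into a global adjunction $\Phi \dashv \Psi$ over $\I$ in which $\Phi$ preserves coCartesian morphisms and $\Psi$ preserves Cartesian morphisms, i.e.\ an object of $\BiFib(\I)$. So the first step is to make this assignment $\F \mapsto \int_{\I}\F$ explicit and check it lands in $\BiFib(\I)$, and likewise spell out its effect on pseudo-natural transformations and modifications, using the already-established commutativity of the square relating $U^{\I}_L, U^{\I}_R, V_L, V_R$ and the two horizontal Grothendieck equivalences.

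For essential surjectivity, I would start from a biCartesian fibration $q\colon\N \lrar \I$ with an adjunction $\Phi \dashv \Psi$ over $\I$ as above. Applying $V_L$ and $V_R$ and then the inverse Grothendieck equivalences produces pseudo-functors $\F_L\colon \I \lrar \Cat$ and $\F_R\colon \I^{\op} \lrar \Cat$ with $\F_L(i) = \F_R(i) = \N_i$ the common fiber. The transition functor of $\F_L$ along $\alpha\colon i \to i'$ is the coCartesian pushforward $\alpha_!$, and the transition functor of $\F_R$ is the Cartesian pullback $\alpha^*$; because $q$ is biCartesian these form an adjunction $\alpha_! \dashv \alpha^*$ (a standard fact: a coCartesian pushforward followed by a Cartesian pullback is computed by the universal properties of Cartesian/coCartesian lifts, giving the unit and counit). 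Assembling the $\N_i$ with these adjunctions, and extracting the coherence isomorphisms from those of $\F_L$ and $\F_R$, yields a pseudo-functor $\F\colon \I \lrar \AdjCat$ with $U^{\I}_L\F \cong \F_L$ and $U^{\I}_R\F \cong \F_R$, hence $\int_{\I}\F \simeq \N$ over $\I$ as a biCartesian fibration.

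For full faithfulness on $2$-morphisms and $1$-morphisms: since $(U_L)_*$ and $(U_R)_*$ are fully faithful on hom-categories (stated in the excerpt), and $V_L$, $V_R$ are faithful, a morphism or $2$-morphism of biCartesian fibrations is determined by its images under $V_L$ and $V_R$; these correspond under the Grothendieck equivalences to a pair consisting of a pseudo-natural transformation of the left parts and one of the right parts, and one checks that a compatible such pair is exactly a pseudo-natural transformation (resp.\ modification) of the $\AdjCat$-valued functors — the compatibility being precisely the commuting-square condition in the definition of a pseudo-transformation of adjunctions. Conversely any morphism in $\BiFib(\I)$, being an adjunction over $\I$ with $\Phi$ preserving coCartesian and $\Psi$ preserving Cartesian arrows, restricts fiberwise to adjunctions whose left/right parts are the images under $\int$ of the components, and the naturality squares over arrows of $\I$ give the required commuting hom-set squares; so $\int$ is full on $1$- and $2$-morphisms. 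I expect the main obstacle to be bookkeeping rather than conceptual: verifying that the coherence data of a pseudo-functor into $\AdjCat$ (the associativity and unit isomorphisms, and the constraint that transition data be \emph{pseudo-transformations of adjunctions}) matches exactly the coherence data of the corresponding biCartesian fibration under $\int$, i.e.\ that no information is lost or spuriously added when passing between the two sides. This is where one must be careful to use that the left and right transition functors are genuine adjoints and that the mate correspondence turns the two families of coherence isomorphisms into one another, so that a single coherent choice on the $\AdjCat$ side is equivalent to a coherent choice of biCartesian structure. Once this compatibility is pinned down, the equivalence follows formally from the two classical Grothendieck equivalences and the faithfulness of the forgetful functors.
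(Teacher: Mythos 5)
Your proposal is correct in substance but takes a genuinely different route from the paper's. You work symmetrically with both forgetful functors $V_L$, $V_R$ (and $U^{\I}_L$, $U^{\I}_R$): essential surjectivity is proved by extracting the left part $\F_L$ and right part $\F_R$ from a biCartesian fibration, noting that the transition functors are adjoint $\alpha_! \dashv \alpha^*$, and assembling an $\AdjCat$-valued pseudo-functor; fullness is proved by extracting a compatible pair of pseudo-natural transformations from a morphism in $\BiFib(\I)$ and checking the compatibility matches the commuting-hom-square condition. The paper instead works one-sidedly through $U^{\I}_L$ and $V_L$ only, and splits the verification into two claims: (1) that a functor $\F\colon\I\to\Cat$ whose Grothendieck construction carries a biCartesian structure lifts to $\AdjCat^{\I}$, which the paper takes as known; and (2) that a pseudo-natural transformation $\sigma\colon U^{\I}_L\F\Rightarrow U^{\I}_L\G$ whose induced map carries a right adjoint over $\I$ lifts to $\AdjCat^{\I}$. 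The paper's key move is that (2) follows from (1) applied over the base $\I\times[1]$: a pseudo-natural transformation between $\Cat$-valued functors is a functor $\I\times[1]\to\Cat$, and an adjunction over $\I$ gives a biCartesian fibration over $\I\times[1]$; lifting it through (1) and restricting back produces the desired $\bar\sigma$. This reduction is slicker and avoids your more delicate step of matching the left- and right-side coherence data via the mate correspondence; on the other hand, your approach actually proves claim (1) (which the paper only cites), so it is more self-contained. The main thing to tighten in your writeup is the last paragraph on full faithfulness: ``the compatibility being precisely the commuting-square condition'' applies to $2$-morphisms in $\AdjCat$, and you need to say explicitly that the pseudo-naturality constraint of $\sigma$ over each arrow of $\I$ is what unwinds into such a $2$-morphism (a pseudo-transformation of adjunctions between $\alpha_!\dashv\alpha^*$ and $\alpha_!'\dashv\alpha'^*$), so that the left and right pseudo-naturality data determine each other by the mate correspondence. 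That is the content replaced by the paper's $\I\times[1]$ trick.
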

\begin{proof}
Since $U^I_L$ and $V_L$ are faithful this amounts to verifying the following claims:
\begin{enumerate}
\item
Let $\F: \I \lrar \Cat$ be a functor such that the coCartesian fibration $\int_\I\F \lrar \I$ is in the image of $V_L$. Then $\F$ is in the image of $U^{\I}_L$.
\item
Let $\F,\G: \I \lrar \AdjCat$ be functors and $\sig: U^{\I}_L(\F) \Rightarrow U^{\I}_L(\G)$ be a pseudo-natural transformation such that the induced map $\sig_*: \int_\I\F \lrar \int_\I\G$ is in the image of $V_L$. Then there exists a morphism $\ovl{\sig}: \F \Rightarrow \G$ such that $U^{\I}_L(\ovl{\sig})) = \sig$.
\end{enumerate}
Assertion $(1)$ can be found, for example, in~\cite{}. As for assertion $(2)$, the pseudo-natural transformation $\sig$ determines a functor $\Sig: \I \times [1] \lrar \Cat$ whose restrictions to $\I \times \{0\}$ and $\I \times \{1\}$ are $U^{\I}_L(\F)$ and $U^{\I}_L(\G)$, respectively. On the other hand, let
$$\xymatrix{
\int_\I\F\ar[dr] \ar[rr]<1ex>^{\sig_*} &&  \int_\I\G\ar[dl]\ar^{\tau_*}[ll]<1ex>_{\upvdash}\\& \I &}$$
be the morphism in $\BiFib(\I)$ whose image under $V_L$ is $\sig_*$. Then the adjunction $\sig_* \dashv \tau_*$ determines a biCartesian fibration
$$ \M \lrar \I \times [1] $$
whose pullbacks to $\I \times \{0\}$ and $\I \times \{1\}$ are $\int_\I\F$ and $\int_\I\G$ respectively. According to $(1)$, the object $\M$ is the Grothendieck construction of a pseudo-natural transformation $\ovl{\Sig}:\I \times [1] \lrar \AdjCat$ whose image under $U^{\I \times [1]}_L$ is $\Sig$. The functor $\ovl{\Sig}$, in turn, determines as morphism $\ovl{\sig}:\F \Rightarrow \G$ in $\AdjCat^{\I}$ such that $V_L(\ovl{\sig}) = \sig$.
\end{proof}

\subsection{Base change for diagrams of adjunctions }\label{ss:left-right}
Let
$$ \xymatrix{
\I \ar[dr]_\F\ar[rr]<1.3ex>^{\L} & & \J\ar[dl]^\G\ar[ll]<0.7ex>^{\R}_\upvdash \\ &  \AdjCat & \\}$$
be a (not necessarily commutative) diagram of categories such that the horizontal pair forms an adjunction. 
\begin{define}\label{d:left-right}
A \textbf{left morphism} from $\F$ to $\G$ over $\L \dashv \R$ is a pseudo-natural transformation $\F \Rightarrow \G \circ \L$, i.e., a compatible family of adjunctions.
$$\xymatrix{
\Sig^L_A:\F(A) \ar[r]<1ex> & \G(\L(A)):\Sig^R_A\ar[l]<1ex>_(0.55){\upvdash}.}$$
indexed by $A \in \I$. Similarly, a \textbf{right morphism} from $\F$ to $\G$ is a pseudo-natural transformation $\F \circ \R \Rightarrow \G$, i.e., a compatible family of adjunctions
$$\xymatrix{
\Theta^L_B:\F(\R(B)) \ar[r]<1ex> & \G(B):\Theta^R_B\ar[l]<1ex>_(0.45){\upvdash}.}$$
indexed by $B \in \J$.
\end{define}

\begin{rem}
Throughout this subsection we will be dealing with a pair of functor $\F,\G$ into $\AdjCat$ with different domains. To keep the notation simple, we shall, as before, use the notation $f_{!} \dashv f^*$ to indicate the image of a morphism $f$ under either $\F$ or $\G$. The possible ambiguity can always be resolved since $\F$ and $\G$ have different domains.
\end{rem}

Now let $\left(\Sig^L, \Sig^R\right)$ be a left morphism as above. We define an adjunction
$$
\xymatrix{\Phi^L:\int_{\I}\F\ar[r]<1ex> & \int_{\J}\G:\Phi^R\ar[l]<1ex>_(0.5){\upvdash}.}
$$
as follows. Given $(A,X) \in \int_\I\F$ we define
$$ \Phi^L(A,X) = \left(\L(A),\Sig^L_A(X)\right) $$
and
$$ \Phi^R(B,Y) = \left(\R(B),\Sig^R_{\R (B)}(\eps^*Y)\right) $$
where $\eps: \L\R(B) \lrar B$ is the counit map. The action on morphisms is defined in the obvious way using the structure of $\left(\Sig^L, \Sig^R\right)$ as a pseudo-natural transformation. The counit 
$$ \Phi^L\left(\Phi^R(B,Y)\right) = \left(\L\R(B),\Sig^L_{\R(B)}\Sig^R_{\R(B)}(\eps^*Y)\right) \lrar (B,Y) $$
is given by the pair of the counit maps $\eps: \L\R(B) \lrar B$ and 
$$ \Sig^L_{\R(B)}\Sig^R_{\R(B)}(\eps^*Y) \lrar \eps^*Y $$
It is routine to verify that this counit map exhibits an adjunction $\Phi^L \dashv \Phi^R$. 

\begin{rem}\label{r:explicit}
More explicitly, if 
$$ (f,\vphi): (A,X) \lrar \Phi^R(B,Y) = \left(\R(B),\Sig^R_{\R(B)}(\eps_{!}Y)\right) $$
is a morphism in $\int_\M \F$, where $f: A \lrar \R(B)$ is a morphism in $\M$ and 
$ \vphi: f_{!}X \lrar \Sig^R_B(\eps_{!}Y) $
is a morphism in $\F(\R(B))$, then its adjoint morphism
$$ (f,\vphi)^{\ad}: \Phi^L(X,A) = (\L(X),\Sig^L_A(X)) \lrar (B,Y) $$
is given by the pair $(f^{\ad},\psi)$ where
$ f^{\ad}: \L(A) \lrar B $
is the adjoint of $f$ with respect to $\L \dashv \R$ and
$$ \psi: (f^{\ad})_{!}\Sig^L_A(X) \lrar Y $$ 
is the adjoint of
$$ \vphi^{\ad}: X \lrar f^*\Sig^R_{\R(B)}(\eps^*Y) \cong \Sig^R_A (\L(f))^*(\eps^*Y) \cong \Sig^R_A \left(f^{\ad}\right)^*Y $$
with respect to the adjunction $\left(f^{\ad}\right)_{!} \circ \Sig^L_A   \dashv  \Sig^R_A \circ \left(f^{\ad}\right)^*$.
\end{rem}

\begin{rem}
If the base change $\L \dashv \R$ is the identity adjunction then the notions of left and right morphisms coincide and become the notion of a morphism in $\AdjCat^{\I}$. In this case the associated adjunction $\Phi^L \dashv \Phi^R$ is just the associated morphism in $\BiFib(\I)$.
\end{rem}

\begin{rem}\label{r:dual}
The dual case of a \textbf{right morphism} 
$$\xymatrix{
\Theta^L_B:\F(\R(B))\ar[r]<1ex> & \G(B) :\Theta^R_B\ar[l]<1ex>_(0.45){\upvdash}}$$
works in a similar way and one obtains an induced adjunction
$$
\xymatrix{\Psi^L:\int_{\I}\F\ar[r]<1ex> & \int_{\J}\G:\Psi^R\ar[l]<1ex>_(0.5){\upvdash}}
$$
\end{rem}

\subsection{Relative limits and colimits of biCartesian fibrations}\label{ss:bicom}
Let $\I$ be a small category. We will denote by $\I^{\triangleright}$ the category obtained by formally adding to $\I$ a \textbf{terminal object}. More explicitly, $\Ob(\I^{\triangleright}) \x{\df}{=} \Ob(\I) \cup \{\ast\}$, the inclusion $\I \hrar \I^{\triangleright}$ is fully-faithful, and $\Hom_{\I^{\triangleright}}(X,\ast)$ is a one pointed set for every $X \in \Ob(\I^{\triangleright})$. Similarly, we will denote by $\I^{\triangleleft}$ the category obtained by formally adding to $\I$ an \textbf{initial object}.
\begin{define}
Let $\I$ be a small category and consider a commutative diagram of categories of the form
\begin{equation}\label{e:square4}
\xymatrix{
\I \ar^{\del}[r]\ar[d] & \D \ar^{\pi}[d] \\
\I^{\triangleright} \ar_{\eps}[r] & \C \\
}
\end{equation}
A \textbf{colimit of $\del$ relative to $\eps$ and $\pi$} is a dashed lift
$$ \xymatrix{
\I \ar^{\del}[r]\ar[d] & \D \ar^{\pi}[d] \\
\I^{\triangleright} \ar_{\eps}[r]\ar@{-->}[ur] & \C \\
}$$
which is initial in the category of all such lifts. We will say that $\pi: \D \lrar \C$ is \textbf{cocomplete}, or that $\D$ is $\pi$-cocomplete, if for every square such as~\ref{e:square4}~, $\del$ admits a colimit relative to $\eps$ and $\pi$. Dually, one may define the notion of \textbf{relative limits} (and relative completeness) by using $\I^{\triangleleft}$ instead of $\I^{\triangleright}$ and taking the terminal dashed lift instead of the initial. We will say that a functor is bicomplete if it is both complete and cocomplete.
\end{define}

\begin{rem}
A category $\D$ is (co)complete if and only if the terminal map $\D \lrar *$ is (co)complete.
\end{rem}

\begin{rem}\label{r:cocompose}
It is straightforward to verify that the class of cocomplete functors is closed under composition. Hence if $\pi: \D \lrar \C$ is a cocomplete functor and $\C$ is a cocomplete category then $\D$ is cocomplete as well. Furthermore, in this case $\pi$ will preserves all colimits. The analogous statement for complete functors holds as well.
\end{rem}

The following Proposition appears to be known to experts. Since we were not able to find an explicit proof in the literature, we have included, for the convenience of the reader, the details of the argument.
\begin{pro}\label{p:cocomplete}
Let $\C$ be a category and Let $\F: \C \lrar \Cat$ be a functor. Assume that $\F(A)$ has all small colimits for every $A$. Then $\int_{\C}\F \lrar \C$ is cocomplete.
\end{pro}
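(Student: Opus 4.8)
The plan is to construct the relative colimit explicitly, fiber by fiber, using the coCartesian transport maps of the fibration $\int_\C\F \lrar \C$ together with the ordinary colimits assumed to exist in each $\F(A)$. Suppose given a square as in~\eqref{e:square4} with $\pi = p$ the projection $\int_\C\F \lrar \C$; so $\eps: \C^{\triangleright} = \I^\triangleright \lrar \C$ picks out a diagram $\eps|_\I: \I \lrar \C$ together with a cocone to an object $a = \eps(\ast) \in \C$, and $\del$ lifts $\eps|_\I$ to $\int_\C\F$. Concretely, $\del$ assigns to each $i \in \I$ an object $\del(i) = (\eps(i), X_i)$ with $X_i \in \F(\eps(i))$, and to each arrow $u: i \to j$ a morphism whose second component is a map $\F(\eps(u))(X_i) \to X_j$ in $\F(\eps(j))$. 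The first step is to push everything into the fiber $\F(a)$: for each $i$, let $g_i: \eps(i) \to a$ be the structure map of the cocone, and set $\wt{X}_i \x{\df}{=} \F(g_i)(X_i) \in \F(a)$. Functoriality of $\F$ (up to the coherence isomorphisms of the pseudo-functor) turns $i \mapsto \wt{X}_i$ into a genuine diagram $\I \lrar \F(a)$; here the maps $\F(\eps(u))(X_i) \to X_j$ get transported along $\F(g_j)$ and combined with the comparison $\F(g_j)\F(\eps(u)) \cong \F(g_i)$.

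The second step is to form $Y \x{\df}{=} \colim_{i \in \I} \wt{X}_i \in \F(a)$, which exists by hypothesis, and to define the dashed lift $\I^\triangleright \lrar \int_\C\F$ by sending $\ast$ to $(a, Y)$ and sending the cocone arrow at $i$ to the morphism $(g_i, \iota_i): (\eps(i), X_i) \to (a, Y)$, where $\iota_i: \F(g_i)(X_i) = \wt{X}_i \to Y$ is the colimit inclusion. One checks this is a cocone in $\int_\C\F$ lying over $\eps$. The third and final step is to verify initiality among all lifts. A competing lift over $\eps$ is a cocone with vertex some $(a, Z)$ — note the first coordinate is forced to be $a = \eps(\ast)$ — together with maps $(g_i, \psi_i): (\eps(i), X_i) \to (a, Z)$, i.e.\ maps $\psi_i: \wt{X}_i \to Z$ in $\F(a)$ compatible with the diagram structure. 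By the universal property of $Y = \colim \wt{X}_i$ there is a unique $\psi: Y \to Z$ in $\F(a)$ with $\psi \iota_i = \psi_i$, and $(\Id_a, \psi)$ is then the unique morphism of lifts; uniqueness of the first component is automatic since any morphism of lifts must project to $\Id_a$ in $\C$, and Remark~\ref{r:unique} (or rather the bare uniqueness in the colimit) pins down the second.

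The main obstacle is bookkeeping the pseudo-functoriality coherences: $\F$ is only a pseudo-functor, so "$i \mapsto \wt X_i$ is a diagram" and "the $\iota_i$ assemble into a cocone in $\int_\C\F$" both require systematically inserting the associativity/unit isomorphisms of $\F$ and checking the relevant cocycle conditions, and similarly the morphisms in $\int_\C\F$ compose only up to these isomorphisms. None of this is deep, but it is where all the care goes; everything else is the evident translation of an ordinary colimit in $\F(a)$ into the Grothendieck construction. I would either carry this out explicitly or, to keep the argument clean, first reduce to the case where $\F$ is a strict functor (replacing $\F$ by a strictification, which does not change $\int_\C\F$ up to equivalence and preserves the hypothesis of fiberwise cocompleteness), and then run the above with all coherence isomorphisms equal to identities. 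A remark afterwards can record the dual statement for relative limits, obtained by the same argument with $\I^\triangleleft$ in place of $\I^\triangleright$, using $p$-Cartesian transport (pullback along the structure maps of a limit cone) and limits in the fibers.
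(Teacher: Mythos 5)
Your proof is correct and follows essentially the same approach as the paper: push the fiberwise objects into $\F(\eps(\ast))$ along $\F$ applied to the cocone maps $\eps(\theta_i)$, take the colimit there, use it as the apex of the lift, and verify initiality via the universal property of the colimit in the fiber. Your discussion of the pseudo-functoriality coherences (and the option to reduce to a strict functor by strictification) is a bit more explicit than the paper, which leaves that bookkeeping implicit.
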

\begin{proof}
Let
$$
\xymatrix{
\I \ar^{\del}[r]\ar[d] & \int_\C\F \ar^{\pi}[d] \\
\I^{\triangleright} \ar_{\eps}[r] & \C \\
}
$$
be a square. For convenience, let us write
$$ \del(i) = (\del_0(i),\del_1(i)) $$
where $\del_0 = \pi \circ \del$ and $\del_1(i) \in \F(\del_0(i))$ is the object determined by $\del(i)$. Furthermore, for each $\alp: i \lrar j$ in $\I$ we will denote by $\del_1(\alp): \del_0(\alp)_*\del_1(i) \lrar \del_1(j)$ the morphism specified by $\del$.

Let us denote by $* \in \I^{\triangleright}$ the cone point and by $\theta_i: i \lrar *$ the unique map in $\I^{\triangleright}$ from $i \in \I \subseteq \I^{\triangleright}$ to $*$.
Let
$$ \del_1': \I \lrar \F(\eps(*)) $$
be given by $\del_1'(i) = \eps(\theta_i)_*(\del_1(i))$ for every object $i \in \I$ and $\del_1'(\alp) = \eps(\theta_i)_*(\del_1(\alp))$ for every morphism $\alp: j \lrar i$.  We then define a functor
$$ \del': \I \lrar \int_{\C}\F $$
by setting
$$ \del'(i) = \left(\eps(*),\del_1'(i)\right) $$
Now since $\F\left(\eps(*)\right)$ has small colimits we can extend $\del_1'$ to a colimit diagram
$$ \ovl{\del_1'}: \I^{\triangleright} \lrar \F\left(\eps(*)\right) $$
The extension $\ovl{\del_1'}$ determines a dashed lift
$$ \xymatrix{
\I \ar^{\del}[r]\ar[d] & \int_\C\F \ar^{\pi}[d] \\
\I^{\triangleright} \ar_{\eps}[r]\ar^{\ovl{\del}}@{-->}[ur] & \C \\
}$$
given by $\ovl{\del}(i) = \del(i)$ for $i \in \I$ and $\ovl{\del}(*) = \left(\eps(*),\ovl{\del_1'}(*)\right)$.

It is left to show that $\ovl{\del}$ is initial in the category of all such lifts. Let $\eta: \I^{\triangleright} \lrar \int_\M\F$ be a competing lift. By (uniquely) factoring the morphisms
$$ \eta(\theta_i):\del(i)=\eta(i) \lrar \eta(*) $$
as
$$ \xymatrix{
\del(i) \ar[rr]\ar[dr] && \eta(*) \\
& (\eps(*),\del'_1(i)) \ar[ur] & \\
}$$
along the $\pi$-coCartesian morphisms $\del(i) \lrar (\eps(*),\del'_1(i))$, we obtain a natural transformation from $\del'_1$ to the constant map $\I \lrar \F(\eps(*))$ on $\eta(*)$. This induces a natural map $\ovl{\del_1'}(*) \lrar \eta(*)$ in $\F(\eps(*))$ and hence a natural transformation of lifts
$$ \tau:\ovl{\del} \lrar \eta .$$
It is then straightforward to verify the uniqueness of $\tau$.

\end{proof}

%

\subsection{Model categories }\label{ss:model}
We shall use a strengthened version of Quillen's original definition of a (closed) model category \cite{Qui}.

\begin{defn}\label{d:model-cat}
A \textbf{model category} $\M$ is category with three distinguished classes of morphisms
$\W=\W_{\M}\;$, $\C of=\C of_{\M}\;$,  $\F ib=\F ib_{\M}$ called \textbf{weak equivalences}, \textbf{fibrations} and \textbf{cofibrations} (respectively), satisfying the following axioms:
\begin{enumerate}[MC1]
\item (Bicompleteness) The category $\M$ is complete and cocomplete.
\item (Two-out-of-three) If $f,g$ are composable maps such that two of $f,g$ and $gf$ are in $\W$ then so is the third.

\item (Retracts) The classes $\W,\Fib$ and $\Cof$ contain all isomorphisms and are closed under retracts.
\item (Liftings) Given the commutative solid diagram in $\M$
$$\xymatrix{A\ar[d]_{i}\ar[r] & X\ar[d]^p\\ B\ar[r]\ar@{-->}[ur] & Y\\}$$ in which $i\in \Cof$ and $p\in \Fib$, a dashed arrow  exists if either $i$ or $p$ are in $\W$.

\item (Factorizations) Any map $f$ in $\M$ has two functorial factorizations:
\begin{enumerate}[(i)]
\item $f=pi$ with $i\in \Cof$ and $p\in \Fib\cap \W$;
\item $f=qj$ with $j\in \Cof \cap \W$ and $q\in \Fib$.
\end{enumerate}
\end{enumerate}

The maps in $\Fib\cap \W$ (resp. $\Cof\cap \W$) are referred to as \textbf{trivial fibrations} (resp. \textbf{trivial cofibrations}). For an object $X\in \M$ we denote by $X^{\fib}$ (resp. $X^{\cof}$) the functorial fibrant (resp. cofibrant) replacement of $X$, obtained by factorizing the map to the terminal object $X\lrar *$ (resp. from the initial object $\emptyset \lrar X$) into a trivial cofibration followed by a fibration (resp. a cofibration followed by trivial fibration).
\end{defn}

Morphisms of model categories are defined as follows.

\begin{defn}
Let $\M,\N$ be model categories. An adjunction $$\xymatrix{F:\M\ar[r]<1ex> & \N:U\ar[l]<1ex>_{\upvdash}}$$ is called a  \textbf{Quillen adjunction} if $F$ preserves cofibrations and trivial cofibration, or equivalently, if $U$ preserves fibrations and trivial fibrations. In such a case, $F$ is called a \textbf{left Quillen functor} and $U$ is called a \textbf{right Quillen functor}.A Quillen adjunction $F\dashv G$ is called a \textbf{Quillen equivalence} if for every cofibrant object $X\in \M$, the composite $X \lrar U(F(X)) \lrar U(F(X)^{\fib})$ is a weak equivalence in $\M$, and for every fibrant object $Y\in \N$, the composite $F(U(Y)^{\cof}) \lrar F(U(Y)) \lrar Y$ is a weak equivalence in $\N$.
\end{defn}

\begin{defn}
We will denote by $\ModCat$ the $(2,1)$-category which has as objects the model categories and as morphisms the Quillen adjunctions; the source and target being those of the left Quillen functor. The $2$-isomorphisms are given by the \textbf{pseudo-natural transformations of Quillen adjunctions}, i.e. the pseudo-natural transformations of the underlying adjunctions in the sense of \S\ref{ss:adj}.
\end{defn}


\section{The integral model structure}\label{s:integral}

Suppose $\M$ is a model category and $\F:\M\rar \ModCat$ a functor. For a morphism $f:A\lrar B$ in $\M$, we denote the associated adjunction in $\ModCat$ by
$$\xymatrix{f_{!}:\F(A)\ar[r]<1ex> & \F(B):f^*\ar[l]<1ex>_{\upvdash}.}$$
Recall that an object of the Grothendieck construction $\int_{\M} \F$ is a pair $(A,X)$ where $A\in \Obj\M$ and $X\in \Obj\F(A)$ and a morphism $(A,X)\rar (B,Y)$ in $\int_{\M} \F$ is a pair $(f,\phi)$ where $f:A\rar B$ is a morphism in $\M$ and $\phi:f_{!}X\rar Y$ is a morphism in $\F(B)$. In this case, we denote by $\phi^{ad}:X\rar f^*Y$ the adjoint map of $\phi$.
\begin{defn}\label{d:model}
Call a morphism $(f,\phi):(A,X)\rar (B,Y)$ in $\int_{\M} \F$
\begin{enumerate}
\item a \textbf{weak equivalence} if $f:A\rar B$ is a weak equivalence in $\M$ and the composite $f_{!}(X^{cof})\rar f_{!}X\rar Y$ is a weak equivalence in $\F(B)$;
\item a \textbf{fibration} if $f:A\rar B$ is a fibration and $\phi^{ad}:X\rar f^*Y$ is a fibration in $\F(A)$;
\item a \textbf{cofibration} if $f:A\rar B$ is a cofibration in $\M$ and $\phi:f_{!}X\rar Y$ is a cofibration in $\F(B)$.
\end{enumerate}
We denote these classes by $\W$, $\Fib$ and $\Cof$ respectively.
\end{defn}

\begin{rem}
The reason for defining weak equivalences in $\int_\M\F$ via a cofibrant replacement of the domain is due to the fact that $f_!(X)$ itself might not have the correct homotopy type if $X$ is not cofibrant.
\end{rem}

We now turn to set up the appropriate conditions on a functor $\F:\M\lrar \ModCat$ that will guarantee the existence of a model structure with weak equivalences, fibrations and cofibrations as in Definition \ref{d:model}. An a-priori condition one should impose is the following.
\begin{defn}\label{d:relative}
We will say that a functor $\F: \M \lrar \ModCat$ is \textbf{relative} if for every weak equivalence $f:A\lrar B$ in $\M$, the associated Quillen pair $f_{!}\dashv f^*$ is a Quillen equivalence.
\end{defn}

\begin{rem}
The condition of relativeness is essential if one wishes to consider $\F$ from an $\infty$-categorical point of view. We will address this issue in greater detail in \S\ref{ss:infinity-cat pov}. 
\end{rem}

\begin{obs}\label{o:sym}
For a relative functor $\F:\M\lrar \ModCat$, a map $(f,\phi):(A,X)\lrar (B,Y)$ in $\int_{\M} \F$ is a weak equivalence iff $f:A\lrar B$ is a weak equivalence and the composite $X\lrar f^*Y\lrar f^*(Y^{\fib})$ is a weak equivalence.
In other words, for a relative functor $\F$, the definition of weak equivalences in $\int_\M \F$ is symmetric with respect to the adjuctions induced by $\F$. This may fail for non-relative functors, see Example~\ref{e:non-relative} below.
\end{obs}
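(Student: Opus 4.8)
\textbf{Proof plan for Observation~\ref{o:sym}.}

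The plan is to show that the two conditions describing weak equivalences in $\int_\M\F$ agree whenever $\F$ is relative, by exploiting the fact that on a weak equivalence $f$ the adjunction $f_!\dashv f^*$ is a Quillen equivalence, and that Quillen equivalences detect weak equivalences in a symmetric fashion on cofibrant and fibrant objects. Fix a morphism $(f,\phi):(A,X)\lrar (B,Y)$ with $f:A\lrar B$ a weak equivalence in $\M$; we must prove that the composite $\gam\colon f_!(X^{\cof})\lrar f_!X\lrar Y$ is a weak equivalence in $\F(B)$ if and only if the composite $\delta\colon X\lrar f^*Y\lrar f^*(Y^{\fib})$ is a weak equivalence in $\F(A)$. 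The first step is to reduce to the case where $X$ is cofibrant and $Y$ is fibrant. This is standard two-out-of-three bookkeeping: the natural map $X^{\cof}\lrar X$ is a weak equivalence, $f^*$ preserves weak equivalences between fibrant objects (being right Quillen, via Ken Brown's lemma), and dually $f_!$ preserves weak equivalences between cofibrant objects; replacing $X$ by $X^{\cof}$ and $Y$ by $Y^{\fib}$ changes both $\gam$ and $\delta$ only by composing with weak equivalences, so neither condition is affected. Here one should be slightly careful that the adjoint of the replaced morphism is the expected composite, but this is immediate from naturality of the unit and counit.

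With $X$ cofibrant and $Y$ fibrant, the morphism $\gam\colon f_!X\lrar Y$ (now with $X^{\cof}=X$) has adjoint $\delta\colon X\lrar f^*Y$ (now with $Y^{\fib}=Y$), and the claim becomes exactly the statement that, for a Quillen equivalence $f_!\dashv f^*$, a morphism $f_!X\lrar Y$ out of the image of a cofibrant object into a fibrant object is a weak equivalence if and only if its adjoint $X\lrar f^*Y$ is. This is the main and essentially only nontrivial point, and it is a well-known characterization of Quillen equivalences: consider the factorization $X\lrar f^*(f_!X)\lrar f^*((f_!X)^{\fib})$ — the composite is a weak equivalence by the definition of Quillen equivalence since $X$ is cofibrant — together with the fact that $f^*$ applied to a weak equivalence between fibrant objects is a weak equivalence. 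The adjoint $\delta$ factors as $X\lrar f^*(f_!X)\lrar f^*Y$ where the second map is $f^*$ applied to $\gam^{\ad}$-free description, i.e. to the unique map $f_!X\to Y$ whose $f^*$-image postcomposed with the unit is $\delta$; chasing the triangle identities shows $\delta$ equals the composite $X\to f^*f_!X \xrightarrow{f^*(\gam)} f^*Y$. If $\gam$ is a weak equivalence, then since both $f_!X$ and $Y$ are fibrant, $f^*(\gam)$ is a weak equivalence, and composing with the weak equivalence $X\to f^*f_!X$ (two-out-of-three against $X\to f^*((f_!X)^{\fib})$ and the weak equivalence $f^*f_!X\to f^*((f_!X)^{\fib})$) shows $\delta$ is a weak equivalence. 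Conversely, if $\delta$ is a weak equivalence, then $f^*(\gam)$ is a weak equivalence by two-out-of-three, and since $f^*$ reflects weak equivalences between fibrant objects when $f_!\dashv f^*$ is a Quillen equivalence (apply the fibrant-object half of the definition, or use that the derived functors are inverse equivalences), $\gam$ is a weak equivalence.

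Finally I would assemble these pieces: given $(f,\phi)$ with $f\in\W_\M$, condition (1) of Definition~\ref{d:model} says $\gam$ is a weak equivalence; by the reduction step this is equivalent to $\gam$ being a weak equivalence after replacing $X$ by $X^{\cof}$ and $Y$ by $Y^{\fib}$, hence by the Quillen-equivalence characterization equivalent to the adjoint $\delta$ being a weak equivalence after the same replacements, hence — again by the reduction step run in the other direction — equivalent to the composite $X\lrar f^*Y\lrar f^*(Y^{\fib})$ being a weak equivalence, which is the asserted symmetric description. I expect the reduction step to be entirely routine and the Quillen-equivalence characterization of adjoint weak equivalences to be the conceptual heart; the only place demanding care is tracking that passing to $X^{\cof}$ and $Y^{\fib}$ transforms the morphism $\gam$ and its adjoint $\delta$ into the morphisms asserted, which is a direct unit/counit computation.
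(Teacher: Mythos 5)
Your overall strategy is correct and is the natural one (the paper itself gives no proof of this observation): reduce, by two-out-of-three bookkeeping, to $X$ cofibrant and $Y$ fibrant, and then invoke the characterization of Quillen equivalences that a map $f_!X\to Y$ is a weak equivalence iff its adjoint $X\to f^*Y$ is. Your reduction step is fine, and your identification of the replaced $\delta$ as the adjoint of the replaced $\gam$ (via naturality of unit/counit) is correct. However, the proof you sketch of the Quillen-equivalence characterization itself has a genuine gap. At the decisive moment you write ``since both $f_!X$ and $Y$ are fibrant, $f^*(\gam)$ is a weak equivalence,'' but $f_!X$ is only \emph{cofibrant} (since $X$ is cofibrant and $f_!$ is left Quillen); nothing forces it to be fibrant, so Ken Brown's lemma does not apply to $f^*(\gam)$. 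For the same reason the subsequent assertion that $f^*f_!X\to f^*\bigl((f_!X)^{\fib}\bigr)$ is a weak equivalence is unjustified: $f_!X\to (f_!X)^{\fib}$ is a trivial cofibration, and a right Quillen functor $f^*$ preserves trivial fibrations, not trivial cofibrations. The same implicit assumption that $f_!X$ is fibrant infects the converse direction (``$f^*$ reflects weak equivalences between fibrant objects'').

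The repair is to factor $\gam$ itself rather than the fibrant-replacement map of $f_!X$. Given $\gam: f_!X\to Y$ a weak equivalence with $Y$ fibrant, write $\gam = q\circ j$ with $j$ a trivial cofibration $f_!X\to Z$ and $q$ a fibration $Z\to Y$; by two-out-of-three $q$ is a trivial fibration, and $Z$ is fibrant, hence is a valid choice of $(f_!X)^{\fib}$. Now $f^*(q)$ is a weak equivalence because $f^*$ preserves trivial fibrations, and $X\to f^*f_!X\to f^*Z$ is the derived unit evaluated at the cofibrant $X$, hence a weak equivalence by the definition of Quillen equivalence (and $\F$ relative). Then $\delta = f^*(q)\circ f^*(j)\circ\eta_X$ is a weak equivalence. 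The converse is dual, factoring the adjoint through a cofibrant replacement of $f^*Y$ on the other side. Alternatively one can simply cite the equivalent characterization of Quillen equivalences (Hovey, Proposition 1.3.13) and skip the re-derivation; as written, though, your derivation would fail.
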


\begin{defn}\label{d:proper}
Let $\M$ be a model category and $\F:\M\rar \ModCat$ a functor. We shall say that $\F$ is
\begin{enumerate}
\item
\textbf{left proper} if whenever  $f:A \lrar B$ is a trivial cofibration in $\M$ the associated left Quillen functor preserves weak equivalences, i.e., $f_{!}(\W_{\F(A)})\subseteq \W_{\F(B)}$;
\item
\textbf{right proper} if whenever $f:A \lrar B$ is a trivial fibration in $\M$ the associated right Quillen functor preserves weak equivalences, i.e., $f^*(\W_{\F(A)})\subseteq \W_{\F(B)}$.
\end{enumerate}
We shall say that $\F$ is \textbf{proper} if it is both left and right proper.
\end{defn}

\begin{rem}
We will see later that the condition of properness is essential for the construction of a well-behaved model structure on $\int_\M\F$ (see Corollary~\ref{c:necessary} below).
\end{rem}

\begin{lem}\label{characterization}
Let $\F:\M \lrar \ModCat$ be a proper relative functor.
\begin{enumerate}[(i)]
\item A morphism $(f,\phi):(A,X)\lrar (B,Y)$ is in $\Cof \cap \W$ if and only if $f:A\lrar B$ is a trivial cofibration and $\phi:f_{!}X\lrar Y$ is a trivial cofibration in $\F(B)$;
\item A morphism $(f,\phi):(A,X)\lrar (B,Y)$ is in $\Fib \cap \W$ if and only if $f:A\lrar B$ is a trivial fibration and $\phi^{ad}:X\lrar f^*Y$ is a trivial fibration in $\F(A)$.
\end{enumerate}
\end{lem}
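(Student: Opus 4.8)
The plan is to prove each of the two characterizations by a standard "two-out-of-three plus lifting" argument, using properness to control the behaviour of $f_!$ and $f^*$ on weak equivalences. I will write out part (i) in detail; part (ii) is dual, using the symmetric description of weak equivalences from Observation~\ref{o:sym} together with right properness.

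For part (i), the "if" direction is the substantial one. Suppose $f:A\lrar B$ is a trivial cofibration in $\M$ and $\phi:f_!X\lrar Y$ is a trivial cofibration in $\F(B)$. By Definition~\ref{d:model} the pair $(f,\phi)$ is certainly a cofibration, so I only need to check it is a weak equivalence, i.e. that the composite $f_!(X^{\cof})\lrar f_!X\overset{\phi}{\lrar} Y$ is a weak equivalence in $\F(B)$. The map $X^{\cof}\lrar X$ is a weak equivalence in $\F(A)$, and since $f$ is a \emph{trivial cofibration} in $\M$, left properness of $\F$ gives that $f_!$ preserves weak equivalences, so $f_!(X^{\cof})\lrar f_!X$ is a weak equivalence in $\F(B)$. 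Composing with the trivial cofibration $\phi$ and using two-out-of-three (MC2) in $\F(B)$, the composite is a weak equivalence, as desired.

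For the "only if" direction of (i), suppose $(f,\phi)$ is in $\Cof\cap\W$. Then $f\in\Cof_\M$ and $\phi\in\Cof_{\F(B)}$ by definition of $\Cof$, and $f\in\W_\M$ since the projection $\int_\M\F\lrar\M$ sends weak equivalences to weak equivalences (immediate from Definition~\ref{d:model}(1)); hence $f$ is a trivial cofibration. It remains to see $\phi$ is a weak equivalence in $\F(B)$. By definition of $\W$ the composite $f_!(X^{\cof})\lrar f_!X\overset{\phi}{\lrar}Y$ is a weak equivalence in $\F(B)$, and as in the previous paragraph left properness makes $f_!(X^{\cof})\lrar f_!X$ a weak equivalence; two-out-of-three then forces $\phi$ to be a weak equivalence, so $\phi\in\Cof_{\F(B)}\cap\W_{\F(B)}$.

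Part (ii) proceeds dually. If $f:A\lrar B$ is a trivial fibration and $\phi^{\ad}:X\lrar f^*Y$ is a trivial fibration in $\F(A)$, then $(f,\phi)$ is a fibration by Definition~\ref{d:model}(2); to see it is a weak equivalence I use the symmetric criterion of Observation~\ref{o:sym}, namely that $X\lrar f^*Y\lrar f^*(Y^{\fib})$ is a weak equivalence. Right properness of $\F$ (applied to the trivial fibration $f$) shows $f^*(Y)\lrar f^*(Y^{\fib})$ is a weak equivalence, and composing with the trivial fibration $\phi^{\ad}$ and using two-out-of-three gives the claim. Conversely, if $(f,\phi)\in\Fib\cap\W$ then $f$ is a trivial fibration (it is a fibration by definition and a weak equivalence since the projection preserves weak equivalences), and the same two-out-of-three argument with the weak equivalence $f^*(Y)\lrar f^*(Y^{\fib})$ from right properness forces $\phi^{\ad}$ to be a weak equivalence, hence a trivial fibration in $\F(A)$.

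The only real subtlety — and the step to state carefully — is the invocation of properness: it is used precisely because the definition of $\W$ in $\int_\M\F$ involves the correction term $f_!(X^{\cof})\lrar f_!X$ (resp. $f^*Y\lrar f^*(Y^{\fib})$), and this term is only guaranteed to be a weak equivalence when $f$ is a \emph{trivial} cofibration (resp. fibration), which is exactly the hypothesis available in both clauses of the lemma. No lifting or factorization arguments are needed here; the result is a direct consequence of Definition~\ref{d:model}, Definition~\ref{d:proper}, Observation~\ref{o:sym}, and the two-out-of-three axiom in the fibers.
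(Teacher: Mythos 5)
Your proof is correct and takes essentially the same route as the paper: properness (applied to the trivial cofibration $f$, resp.\ trivial fibration) makes the correction map $f_!(X^{\cof})\lrar f_!X$ (resp.\ $f^*Y\lrar f^*(Y^{\fib})$) a weak equivalence, so two-out-of-three identifies the condition on the composite with the condition on $\phi$ (resp.\ $\phi^{\ad}$), and Observation~\ref{o:sym} handles part~(ii) dually. The paper states this as a single biconditional while you separate the two directions, but the argument is the same.
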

\begin{proof}\
\begin{enumerate}[(i)]
\item
Since the map $X^{\cof}\overset{\sim}{\lrar} X$ is a weak equivalence, and $f_{!}$ preserve weak equivalences, the composite
$$f_{!}(X^{cof})\lrar f_{!}X\lrar Y$$
is a weak equivalence if and only if $f_{!}X\lrar Y$ is a weak equivalence.
\item
In light of Observation~\ref{o:sym} we can just use the dual argument. More explicitly, since the map $X \x{\sim}{\lrar} X^{\fib}$ is a weak equivalence, and $f^*$ preserve weak equivalences, the composite
$$ X \lrar f^*Y \lrar f^*(Y^{\fib}) $$
is a weak equivalence if and only if $X \lrar f^*Y$ is a weak equivalence.
\end{enumerate}
\end{proof}

We are now ready to prove the main theorem of this section.
\begin{thm}\label{model structure}
Let $\M$ be a model category and $\F: \M \lrar \ModCat$ a proper relative functor. The classes of weak equivalences $\W$, fibrations  $\Fib$ and cofibrations $\Cof$ of~\ref{d:model} endow $\int_{\M}\F$ with the structure of a model category, called the \textbf{integral model structure}.
\end{thm}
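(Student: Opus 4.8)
The plan is to verify the axioms MC1--MC5 in turn, using the structural results already available: bicompleteness of $\int_\M\F$ via Proposition~\ref{p:cocomplete} (and its dual), the characterizations of trivial cofibrations and trivial fibrations in Lemma~\ref{characterization}, and the adjunction/base-change machinery of \S\ref{ss:adj}--\S\ref{ss:left-right}. Axiom MC1 is immediate: since each $\F(A)$ is bicomplete, Proposition~\ref{p:cocomplete} gives that $p:\int_\M\F\lrar\M$ is cocomplete, its dual gives completeness, and since $\M$ itself is bicomplete, Remark~\ref{r:cocompose} yields bicompleteness of $\int_\M\F$. Axiom MC2 (two-out-of-three) follows from the symmetric description of weak equivalences: by Observation~\ref{o:sym} a morphism $(f,\phi)$ is a weak equivalence iff $f$ is a weak equivalence in $\M$ and $X\to f^*Y\to f^*(Y^{\fib})$ is a weak equivalence; unwinding a composable pair $(f,\phi),(g,\psi)$, the $\M$-components satisfy two-out-of-three, and on fibers one reduces --- using that $f^*$ is part of a Quillen equivalence when $f\in\W_\M$, and a diagram chase comparing $f^*$, $g^*$ and $(gf)^*$ --- to two-out-of-three in the relevant fiber. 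Axiom MC3 (retracts) is a formal consequence of the fact that $p$, the fiberwise functors $f_!$, $f^*$, and (functorial) cofibrant/fibrant replacement all preserve retract diagrams, so a retract of a map in $\W$, $\Fib$, or $\Cof$ has $\M$-component and fiber-component that are themselves retracts of maps in the corresponding classes.

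The substance is in MC5 (factorizations) and then MC4 (lifting). For MC5, given $(f,\phi):(A,X)\lrar(B,Y)$, first factor $f = q\circ j$ in $\M$ with $j:A\lrar C$ a trivial cofibration and $q:C\lrar B$ a fibration. Over the intermediate object $C$ one must produce an object $(C,Z)$ together with maps $(j,\alpha):(A,X)\lrar(C,Z)$ and $(q,\beta):(C,Z)\lrar(B,Y)$ in the prescribed classes. The idea is: push $X$ forward to $j_!X\in\F(C)$, form the composite $j_!X \lrar q^*Y$ adjoint to $\phi$ (using $q^*Y\in\F(C)$), then factor this map in the model category $\F(C)$ as $j_!X \overset{\alpha}{\lrar} Z \overset{\beta^{ad}}{\lrar} q^*Y$ with $\alpha$ a cofibration and $\beta^{ad}$ a trivial fibration; set $(C,Z)$ as the intermediate object. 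By Definition~\ref{d:model}, $(j,\alpha)$ is a cofibration (since $j\in\Cof_\M$ and $\alpha\in\Cof_{\F(C)}$), and $(q,\beta)$ is a trivial fibration by Lemma~\ref{characterization}(ii) (since $q\in\Fib_\M$ is... {\it here one must be careful}: $q$ need not be trivial). So instead one should factor $f$ the other way, or more carefully track which half carries the acyclicity. The correct bookkeeping: to get the (cofibration, trivial fibration) factorization of $(f,\phi)$, factor $f = q\circ j$ in $\M$ with $j\in\Cof_\M$ and $q\in\Fib_\M\cap\W_\M$; then in $\F(C)$ factor $j_!X\to q^*Y$ as a cofibration followed by a trivial fibration, and conclude $(j,\alpha)\in\Cof$ and $(q,\beta)\in\Fib\cap\W$ via Lemma~\ref{characterization}(ii) --- this uses $q\in\W_\M$ together with properness (right properness lets $q^*$ be controlled). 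Functoriality is inherited from functoriality of factorizations in $\M$ and in the $\F(A)$'s. The dual construction, factoring $f$ as a trivial cofibration followed by a fibration and factoring the relevant adjoint map in $\F(A)$ (using {\it left} properness and Lemma~\ref{characterization}(i)), gives factorization (i).

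Finally, MC4 (lifting): given a square with $(i,\iota):(A,X)\lrar(B,Y)$ a cofibration and $(p,\rho):(C,Z)\lrar(D,W)$ a fibration, one first solves the lifting problem for the $\M$-components $i$ and $p$ (possible since one of them is acyclic), obtaining $h:B\lrar C$ with $ph = $ the bottom map and $hi = $ the top map. The remaining lifting problem lives entirely in a single fiber: transporting everything to $\F(B)$ via $h_!$ and $p^*$, one gets a square in $\F(B)$ whose left edge is a cofibration (the fiber-component of $(i,\iota)$, or its image, stays a cofibration) and whose right edge is a fibration (the fiber-component of $(p,\rho)$, suitably pulled back, using that $p^*$ of a fibration is a fibration, plus Lemma~\ref{characterization} when the acyclic case demands it); then the model structure on $\F(B)$ supplies the lift, which assembles with $h$ into the desired morphism $(B,Y)\lrar(C,Z)$. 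The main obstacle I expect is precisely this reduction-to-a-single-fiber step in MC4 and MC5: one must choose the intermediate/lifted base map and then carefully identify which fiber to work in and which of the two properness hypotheses (left vs.\ right) and which half of Lemma~\ref{characterization} is needed so that the fiber-level map is in the correct class --- the relative and proper hypotheses on $\F$ are exactly what make these fiber-level problems well-posed, and getting the adjunction bookkeeping (the interplay of $f_!$, $f^*$, units/counits, and cofibrant replacements) right is where the real care lies.
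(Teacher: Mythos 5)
Your overall strategy --- verifying MC1--MC5 directly, with Proposition~\ref{p:cocomplete} and its dual for bicompleteness, factoring first in $\M$ and then in the fiber over the intermediate object for MC5, and lifting first in $\M$ and then reducing to a single fiber for MC4 --- is the same as the paper's proof, and the bookkeeping you flag for MC5 (which half of Lemma~\ref{characterization}, where the two properness hypotheses enter) resolves exactly as you surmise.

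The gap is in MC3 (retracts). You assert this is ``a formal consequence'' of $f_!$, $f^*$ and functorial cofibrant replacement preserving retract diagrams, but for $\W$ this does not work. A weak equivalence $(f,\vphi)$ is defined through the asymmetric condition that $f_!(X^{\cof}) \to f_!X \to X'$ be a weak equivalence, and after transporting a retract diagram to a common fiber $\F(A')$ via the base retraction $r'$, the relevant fiber-level map is exhibited as a retract of $r'_!\psi$; but $r'_!$ is merely left Quillen and the target of $\psi$ need not be cofibrant, so $r'_!\psi$ need not be a weak equivalence even when $\psi$ is. The paper circumvents this by proving factorizations and 2-out-of-3 first and then reducing retract-closure of $\W$ to retract-closure of $\W\cap\Cof$ and $\W\cap\Fib$, whose characterizations in Lemma~\ref{characterization} avoid cofibrant replacement entirely and for which the transport via $r'_!$ (resp.\ $i^*$) does preserve the relevant class. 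In particular your ordering, with retracts third and factorizations fourth, is backwards: the retract axiom for $\W$ genuinely depends on having the factorization axiom already. (A smaller slip: in MC4 with your labelling the residual lifting problem lives in $\F(C)$, the fiber over the source of the fibration, not $\F(B)$.)
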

The proof of Theorem~\ref{model structure} will occupy the reminder of this section and we shall break it down into separate claims, each verifying a different axiom. We use the terminology of Definition~\ref{d:model} for convenience.

\begin{enumerate}
\item
\textbf{$\int_{\M}\F$ is bicomplete}.
\begin{proof}
Viewing $\F$ as a diagram of left (Quillen) functors indexed by $\M$, Proposition~\ref{p:cocomplete} shows that $\int_{\M}\F\lrar \M$ is cocomplete and since $\M$ is cocomplete it follows that $\int_{\M}\F$ is cocomplete. Viewing $\F$ as a diagram of right (Quillen) functors indexed by $\M^{\op}$, Proposition~\ref{p:cocomplete} applies again to show that $\left(\int_{\M}\F\right)^{\op}\lrar \M^{\op}$ is cocomplete. Since $\M^{\op}$ is cocomplete it follows that $\left(\int_{\M}\F\right)^{\op}$ is cocomplete so that $\int_{\M}\F$ is complete.   


\end{proof}

\item
\textbf{$\W$ is closed under 2-out-of-3}.
\begin{proof}
Let $(A,X) \x{(f,\phi)}{\lrar} (B,Y) \x{(g,\psi)}{\lrar} (C,Z)$ be a pair of composable morphisms, where $\phi: f_{!}X \lrar Y$ is a morphism in $\F(B)$ and $\psi: g_{!}Y \lrar Z$ is a morphism in $\F(Z)$. Their composition is given by the map
$$ (g \circ f,\psi \circ g_{!}(\phi)) : (A,X) \lrar (C,Z) $$
Assume first that $(f,\phi)$ is a weak equivalences. By definition we get that $f$ is a weak equivalence in $\M$. Let $X^{\cof} \lrar X$ be a cofibrant replacement of $X$. Since $(f,\phi)$ is a weak equivalence the composition
$$ f_{!}\left(X^{\cof}\right) \lrar f_{!}(X) \x{\phi}{\lrar} Y $$
is a weak equivalence. Since $X^{\cof}$ is cofibrant and $f_{!}$ is left Quillen we get that $f_{!}\left(X^{\cof}\right)$ is cofibrant and so by the above we can consider it as a cofibrant replacement of $Y$. Now suppose that one (and hence both) of $g,f \circ g$ are weak equivalences in $\M$. Then we see that the condition of $(g,\psi)$ being a weak equivalence and the condition of $(g \circ f,\psi \circ g_{!}(\phi))$ being a weak equivalence are both equivalent to the composition
$$ g_{!}f_{!}\left(X^{\cof}\right) \lrar g_{!}(Y) \x{\psi}{\lrar} Z $$
being a weak equivalence in $\F(C)$.

Now assume that $(g,\phi)$ and $(g \circ f,\psi \circ g_{!}(\phi))$ are weak equivalences. Then $g$ and $g \circ f$ are weak equivalences and so $f$ is a weak equivalence. Let $X^{\cof} \lrar X$ be a cofibrant replacement for $X$. We need to show that the composition
$$ f_{!}\left(X^{\cof}\right) \lrar f_{!}(X) \x{\phi}{\lrar} Y $$
is a weak equivalence in $\F(B)$. Factor this map as $f_{!}\left(X^{\cof}\right) \lrar Y^{\cof} \lrar Y$ where the former is a cofibration and the latter a weak equivalence. Then $Y^{\cof}$ can be considered as a cofibrant replacement for $Y$. Now consider the two maps
$$ g_{!}f_{!}\left(X^{\cof}\right) \lrar g_{!}\left(Y^{\cof}\right) \lrar Z $$

Since $(g,\psi)$ and $(g \circ f,\psi \circ g_{!}(\phi))$ are weak equivalences we get the right map and the composition of the two maps are weak equivalences. Hence we get that the map
$$ g_{!}f_{!}\left(X^{\cof}\right) \lrar g_{!}\left(Y^{\cof}\right) $$
is a weak equivalence in $\F(C)$. Since $g$ is a weak equivalence in $\M$ it follows that $g_{!} \dashv g^*$ is a Quillen equivalence. Since Quillen equivalences reflect equivalences between cofibrant objects we get that the map
$$ f_{!}\left(X^{\cof}\right) \lrar Y^{\cof} $$
is a weak equivalence, and hence the composition
$$ f_{!}\left(X^{\cof}\right) \lrar Y $$
is a weak equivalence.
\end{proof}
\item
\textbf{The class $\Cof$ (resp. $\Cof \cap \W$) satisfies the left lifting property with respect to $\W \cap \Fib$ (resp. $\Fib$)}.
\begin{proof}
Consider a commutative diagram of the form
\begin{equation}\label{e:square}
\xymatrix{
(A,X) \ar[r]\ar_{(i,\iota)}[d] & (B,Y) \ar^{(p,\pi)}[d] \\
(C,Z) \ar[r] & (D,W) \\
}
\end{equation}
where $(i,\iota)$ is a cofibration and $(p,\pi)$ is a fibration. We need to show that if either $(i,\iota)$ or $(p,\pi)$ is trivial then this diagram admits a lift. Now note that $p$ is a fibration in $\M$ and $i$ is a cofibration in $\M$ and in this case one of these will be a weak equivalence in $\M$. Hence the indicated lift
$$ \xymatrix{
A \ar[r]\ar_{i}[d] & B \ar^{p}[d] \\
C \ar[r]\ar^{f}@{-->}[ur] & D \\
}
$$
will exist. In order to extend $f$ to a lift in~\ref{e:square} we need to construct a map $\vphi: f_{!}C \lrar B$ satisfying suitable compatibility conditions. Unwinding the definitions this amounts to constructing a lift in the square
\begin{equation}\label{e:square2}
\xymatrix{
f_{!}i_{!}X \ar[r]\ar_{f_{!}\iota}[d] & Y \ar^{\pi^{\ad}}[d] \\
f_{!}Z \ar[r]\ar@{-->}^{\vphi}[ur] & p^*W \\
}\end{equation}
which lives in the model category $\F(B)$. Since $f_{!}$ is a left Quillen functor and $\iota$ is a cofibration we get that $f_{!}\iota$ is a cofibration. Similarly, by our assumptions  $\pi^{\ad}$ is a fibration. Now According to Lemma~\ref{characterization}, if $(i,\iota)$ is trivial then $\iota$ is trivial and if $(p,\pi)$ is trivial then $\pi^{\ad}$ is trivial. Since $f_{!}$ preserves trivial cofibrations we get that in either of these cases the indicated lift in~\ref{e:square2} will exist.
\end{proof}

\item
\textbf{Every morphism $f$ in $\int_\C\F$ can be functorially factored as a morphism $f' \in \Cof$ (resp. $f' \in \W \cap \Cof$) followed by a morphism $f'' \in \W \cap \Fib$ (resp. $f'' \in \Fib$).}
\begin{proof}
Let
$$ (f,\vphi): (A,X) \lrar (B,Y) $$
be a morphism in $\int_\C \F$, so that $f: A \lrar B$ is a morphism in $\M$ and $\vphi: f_{!}X \lrar Y$ is a morphism in $\F(B)$. We start by (functorially) factoring $f$ as
$$ A \x{f'}{\lrar} C \x{f''}{\lrar} B $$
where $f'$ is a cofibration (resp. trivial cofibration) in $\M$ and $f''$ is a trivial fibration (resp. fibration) in $\M$. Now consider the map
$$ \psi: f'_{!}X \lrar (f'')^*Y $$
in $\F(C)$ which is adjoint to
$$ \vphi: f''_{!}f'_{!}X = f_{!}X \lrar Y $$
We can (functorially) factor $\psi$ as
$$ f'_{!}X \x{\vphi'}{\lrar} Z \x{\psi'}{\lrar} (f'')^*Y $$
where $\vphi'$ is a cofibration (resp. trivial cofibration) in $\F(C)$ and $\psi'$ is a trivial fibration (resp. fibration) in $\F(C)$. Let
$$ \vphi'': f''_{!}Z \lrar Y $$
be the adjoint map of $\psi'$. Then we obtain a factorization
$$ (A,X) \x{(f',\vphi')}{\lrar} (C,Z) \x{(f'',\vphi'')}{\lrar} (B,Y) $$
of $(f,\vphi)$ and using Lemma~\ref{characterization} we get that $(f',\vphi')$ is a cofibration (resp. trivial cofibration) in $\int_\M \F$ and $(f'',\vphi'')$ is a trivial fibration (resp. fibration) in $\int_\M \F$.

\end{proof}

\item
\textbf{$\W,\Cof$ and $\Fib$ are closed under retracts and contain all isomorphisms}.

\begin{proof}
We have already established that every map in $\int_\M\F$ can be \textbf{functorially} factored into a cofibration followed by a trivial fibration and to a trivial cofibration followed by a fibration. Furthermore, in light of the $2$-out-of-$3$ rule verified above every weak equivalence in $\int_\M\F$ will be factored into a trivial cofibration followed by a trivial fibration in any of these two factorizations. By applying these factorizations to retract diagrams we see that $\W$ will be closed under retracts once $\W \cap \Cof$ and $\W \cap \Fib$ are closed under retracts.

Given a category $\C$, we will use the term \textbf{retract diagram of maps in $\C$} to indicate a retract diagram in the arrow category $\C^{[1]}$. Now consider a retract diagram of maps
$$ \xymatrix{
(A,X) \ar^{(i,\iota)}[r]\ar^{(f,\vphi)}[d] & (B,Y) \ar^{(r,\rho)}[r]\ar^{(g,\psi)}[d] & (A,X) \ar^{(\vphi,f)}[d] \\
(A',X') \ar^{(i',\iota')}[r] & (B',Y') \ar^{(r',\rho')}[r] & (A',X') \\
}$$
in $\int_\M\F$. Then we get in particular a retract diagram of maps
$$ \xymatrix{
A \ar^{i}[r]\ar^{f}[d] & B \ar^{r}[r]\ar^{g}[d] & A \ar^{f}[d] \\
A' \ar^{i'}[r] & B' \ar^{r'}[r] & A' \\
}$$
in $\M$ and a retract diagram of maps
\begin{equation}\label{e:retract-2}
\xymatrix{
f_{!}X \ar[r]\ar^{\vphi}[d] & r'_{!}g_{!}Y \ar[r]\ar^{r_{!}'\psi}[d] & f_{!}X \ar^{\vphi}[d] \\
X' \ar[r] & r'_{!}Y' \ar[r] & X' \\
}
\end{equation}
in $\F(A')$. Now assume that $(g,\psi)$ is a (trivial) cofibration. Then $g$ is a (trivial) cofibration in $\M$ and $\psi$ is a (trivial) cofibration in $\F(B')$. In this case $r'_{!}\psi$ will be a (trivial) cofibration in $\F(A)$. Hence both $f$ and $\vphi$ will be (trivial) cofibrations in $\M$ and $\F(A')$ respectively and so $(f,\vphi)$ will be a (trivial) cofibration in $\int_\M \F$. This shows that $\Cof$ and $\W \cap \Cof$ are closed under retracts.

To show that $\Fib$ and $\W \cap \Fib$ are closed under retracts we observe that the diagram~\ref{e:retract-2} also induces a retract diagram of the form
$$ \xymatrix{
X \ar[r]\ar^{\vphi^{\ad}}[d] & i^*Y \ar[r]\ar^{i^*\psi^{\ad}}[d] & X \ar^{\vphi^{\ad}}[d] \\
f^*X' \ar[r] & i^*g^*Y' \ar[r] & f^*X' \\
}$$
in the model category $\F(A)$. A similar argument will now show that if $g$ is a (trivial) fibration in $\M$ and $\psi^{\ad}$ is a (trivial) fibration in $\F(B)$ then $f$ is a (trivial) fibration in $\M$ and $\vphi^{\ad}$ is a (trivial) fibration in $\F(A)$. This shows that $\Fib$ and $\W \cap \Fib$ are closed under retracts.
\end{proof}

\end{enumerate}

\subsection{Comparison with the $\infty$-categorical Grothendieck construction}\label{ss:infinity-cat pov}

Let $\Set_\Del$ denote the category of simplicial sets and let $\Set^+_\Del$ denote the category of marked simplicial sets. The category $\Set_\Del$ can be endowed with the Joyal model structure (see~\cite[Theorem $2.2.5.1$]{Lur09}) yielding a model for the theory of $\infty$-categories. Similarly, the category $\Set^+_\Del$ can be endowed with the coCartesian model structure (see~\cite[Remark $3.1.3.9$]{Lur09}) yielding another model for the theory of $\infty$-categories. The adjunction
$$ \xymatrix{
\Set_\Del \ar[r]<1ex>^{(\bullet)^{\flat}} & \Set^+_\Del \ar[l]<1ex>_{\upvdash}^{U} \\
}$$
is a Quillen equivalence, where $X^{\flat} = (X,s_0(X_0))$ is the minimal marked simplicial set on $X$ and $U(X,M) = X$. Given a model category $\M$, we will denote by 
$$ \M_\infty = U\left(\N\left(\M^{\cof}\right),\N\left(\W\cap \M^{\cof}\right)\right) $$ 
the underlying simplicial set of the fibrant replacement of the marked simplicial set $\left(\N\left(\M^{\cof}\right),\N\left(\W \cap \M^{\cof}\right)\right)$. Here, $\M^{\cof} \subseteq \M$ denotes the full subcategory of cofibrant objects. Following Lurie (see~\cite[Definition $1.3.4.15$]{Lur11}), we will refer to $\M_\infty$ as the underlying $\infty$-category of $\M$. In the case of $\Set^+_\Del$, we will also denote by $\Cat_\infty \x{\df}{=} \left(\Set^+_\Del\right)_\infty$ the underlying $\infty$-category of $\infty$-categories. 

Now let $\F:\M \lrar \ModCat$ be a proper relative functor. By restricting attention to the left Quillen functors one obtains a relative functor
$$ \F^{\cof}: \M^{\cof}\lrar \RelCat $$
given by $\F^{\cof}(A) = (\F(A))^{\cof}$. Since $\F$ is relative we get in particular that $\F^{\cof}$ sends weak equivalences in $\M^{\cof}$ to Dwyer-Kan equivalences of relative categories. Composing with the nerve functor we obtain a functor
$$ \N \F^{\cof} :\M^{\cof} \lrar \Set^+_\Del = \left(\Set^+_\Del\right)^{\cof} $$
which sends $\W^{\cof}$ to weak equivalences. We hence obtain a map of $\infty$-categories
$$ \F_\infty: \M_\infty \lrar \Cat_\infty $$

\begin{rem}
The construction of an $\infty$-categorical map $\F_\infty$
underlying $\F$ depends crucially on the fact that $\F$ is \textbf{relative}. For non-relative functors there is often no well-defined way to associate a map of the form $\M_\infty \lrar \Cat_\infty$. A concrete example illustrating this can be found in Example~\ref{e:non-relative} below. 
\end{rem}

According to Lurie's $\infty$-categorical Grothendieck construction, given by the unstraightening functor (see~\cite[Theorem $3.2.0.1$]{Lur09}), there exists an equivalence of $\infty$-categories between $\Fun(\M_\infty,\Cat_\infty)$ and the $\infty$-category of coCartesian fibrations $X \lrar \M_\infty$. We will denote by 
$$ \int_{\M_\infty}\F_\infty \lrar \M_\infty $$
the coCartesian fibration associated with $\F_\infty$ by the aforementioned equivalence. The purpose of this section is to relate the underlying $\infty$-category of the integral model structure on $\int_\M \F$, constructed in the previous subsection, to the $\infty$-category $\int_{\M_\infty} \F_\infty$. We first observe that the natural projection
$$ \int_\M \F \lrar \M $$
induces a map of $\infty$-categories
$$ \left(\int_\M\F\right)_\infty \lrar \M_\infty $$
We then have the following Proposition:
\begin{pro}
Let $\F: \M \lrar \ModCat$ be a proper relative functor and let
$$ \F_\infty: \M_\infty \lrar \Cat_\infty $$
be the associated $\infty$-functor as above. Then there is a natural equivalence of $\infty$-categories over $\M_\infty$
$$ \xymatrix{
\left(\int_\M\F\right)_\infty \ar^{\simeq}[rr] \ar[dr] && \int_{\M_\infty} \F_\infty \ar[dl] \\
& \M_\infty & \\
}$$
\end{pro}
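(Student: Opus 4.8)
The plan is to exhibit both $\infty$-categories as the underlying $\infty$-category of a single marked-simplicial-set model, or more precisely to produce an explicit zig-zag of equivalences over $\M_\infty$ whose total effect is the identification claimed. First I would recall Hinich's localization theorem (alluded to in the introduction as ``a result of Hinich''): for a relative functor into $\RelCat$ (or into $\ModCat$, remembering only left Quillen functors), the relative nerve of the Grothendieck construction of the associated diagram of relative categories computes the unstraightening of the induced $\infty$-functor. Concretely, the relative category $\left(\int_\M\F, \W\right)$, restricted to cofibrant-in-each-fiber objects over cofibrant objects of $\M$, should map by a Dwyer--Kan-type equivalence to the relative nerve / unstraightening $\int_{\M_\infty}\F_\infty$. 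So the first step is to set up the relative subcategory of $\int_\M\F$ on which this comparison is clean.

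The key technical point, and the step I expect to be the main obstacle, is that the integral model structure's weak equivalences are defined using a cofibrant replacement in the \emph{domain} fiber, so $\left(\int_\M\F\right)_\infty$ is by definition the underlying $\infty$-category of $\left(\int_\M\F^{\cof}_{\text{fib}}, \W\right)$ where one restricts to objects $(A,X)$ with $A$ cofibrant in $\M$ and $X$ cofibrant in $\F(A)$ --- call this full subcategory $\left(\int_\M\F\right)^{\cof}$. I would argue, using MC5 functorial factorizations in the integral model structure (Claim 4 in the proof of Theorem~\ref{model structure}) together with Lemma~\ref{characterization}, that the inclusion $\left(\int_\M\F\right)^{\cof}\hookrightarrow \int_\M\F$ induces an equivalence on underlying $\infty$-categories; this is the standard ``cofibrant objects compute the localization'' argument, valid because every object admits a trivial cofibration from / weak equivalence to a cofibrant one. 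The subtlety is to check that a morphism $(f,\vphi)$ between cofibrant objects is a weak equivalence in the integral sense \emph{iff} $f\in\W_\M$ and $\vphi\in\W_{\F(B)}$ --- which follows from the same reasoning as Lemma~\ref{characterization}(i), since $f_!$ applied to the already-cofibrant $X$ has the right homotopy type.

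Next I would identify $\left(\int_\M\F\right)^{\cof}$, as a relative category, with the Grothendieck construction $\int_{\M^{\cof}}\F^{\cof}$ of the relative diagram $\F^{\cof}\colon\M^{\cof}\to\RelCat$, $A\mapsto(\F(A))^{\cof}$, equipped with its natural class of weak equivalences (the ones that are fiberwise weak equivalences over weak equivalences in $\M^{\cof}$). This is essentially a definition-chase: objects, morphisms, and the left-pushforward description all match on the nose. Having done this, I would invoke the comparison between the relative nerve of a Grothendieck construction of relative categories and Lurie's unstraightening: the marked simplicial set $\N\left(\int_{\M^{\cof}}\F^{\cof}\right)$ with its fiberwise-weak-equivalence marking is, after fibrant replacement in the coCartesian model structure on $\left(\Set^+_\Del\right)_{/\M_\infty}$, equivalent over $\M_\infty$ to the unstraightening of $\N\F^{\cof}$, which is by definition $\int_{\M_\infty}\F_\infty$. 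This is where Hinich's result (or equivalently the Gepner--Haugseng--Nikolaus lax-colimit comparison combined with standard straightening yoga) does the real work; it is quoted rather than reproved.

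Finally I would assemble the zig-zag: $\left(\int_\M\F\right)_\infty \xleftarrow{\;\simeq\;}\left(\int_\M\F\right)^{\cof}_\infty \xrightarrow{\;\cong\;}\left(\int_{\M^{\cof}}\F^{\cof}\right)_\infty \xrightarrow{\;\simeq\;}\int_{\M_\infty}\F_\infty$, and observe that every map in sight is a map \emph{over} $\M_\infty$ (compatibly with the projections to $\M$ and its underlying $\infty$-category), since all the identifications are induced by functors commuting with the projection to $\M$ and $\M^{\cof}$. Naturality in $\F$ follows because each step of the construction is manifestly functorial in the proper relative functor $\F$. The only genuinely nontrivial input is the Hinich-type comparison theorem; the rest is bookkeeping about cofibrant replacements and the compatibility of the various projections, which I would carry out but not belabor.
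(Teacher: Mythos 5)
Your proposal is correct and takes essentially the same route as the paper: both restrict to the cofibrant subcategory $(\int_\M\F)^{\cof}$ (which by the definition of $(-)_\infty$ is already what one localizes), identify the resulting marked simplicial set over $(\N\M^{\cof},\N\W^{\cof})$ as a marked coCartesian fibration in Hinich's sense, and invoke Hinich's Proposition~3.1.4 to obtain the coCartesianness of $(\int_\M\F)_\infty\to\M_\infty$ together with the identification of its straightening with $\F_\infty$. Your zig-zag is a slightly more explicit bookkeeping of the same chain of identifications, with the first arrow being definitional rather than a genuine comparison.
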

\begin{proof}
Let $\W,\V$ be the classes of weak equivalences of $\M$ and of the integral model structure $\int_\M \F$ (respectively). It is straightforward to verify that the map $$\left(\N\left(\int_\M \F\right)^{cof}, \N\V^{cof}\right)\lrar \left(\N\M^{\cof},\N\W^{cof}\right)$$ is a marked cocartesian fibration in the sense of \cite[Definition $3.1.1$]{Hin} (the marking on $\N\M^{\cof}$ is saturated since $\M$ is a model category). Hence the first part of \cite[Proposition $3.1.4$]{Hin} implies that $\left(\int_\M\F\right)_\infty\lrar \M_\infty$ is a coCartesian map. The second part of \cite[Proposition $3.1.4$]{Hin} implies the equivalence of $\infty$-categories (over $\M_\infty$) $$\left(\int_\M\F\right)_\infty \overset{\simeq}{\lrar} \int_{\M_\infty} \F_\infty .$$ 
\end{proof}

In light of the main result of Gepner, Haugseng and Nikolaus (\cite[Theorem 1.1]{GHN}), we have the following immediate corollary:
\begin{cor}
Let $\F: \M \lrar \ModCat$ be a proper relative functor and let
$ \F_\infty: \M_\infty \lrar \Cat_\infty $
be the associated $\infty$-functor as above. Then the integral model category $\int_\M\F$ is a model for the lax colimit of $\F_\infty$.
\end{cor}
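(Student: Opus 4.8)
The plan is simply to chain the preceding Proposition together with the main theorem of Gepner--Haugseng--Nikolaus, so the argument is essentially a one-line deduction once the meaning of the statement is unwound. Recall that to say a model category $\N$ is a \emph{model} for an $\infty$-category $\mathcal{X}$ means precisely that there is an equivalence $\N_\infty \simeq \mathcal{X}$ between the underlying $\infty$-category of $\N$, in the sense fixed at the start of \S\ref{ss:infinity-cat pov}, and $\mathcal{X}$. Thus the only thing to be verified is that $\left(\int_\M\F\right)_\infty$ is equivalent, as an $\infty$-category, to the lax colimit of the diagram $\F_\infty: \M_\infty \lrar \Cat_\infty$.

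First I would invoke the preceding Proposition, which furnishes a natural equivalence of $\infty$-categories over $\M_\infty$
$$ \left(\int_\M\F\right)_\infty \overset{\simeq}{\lrar} \int_{\M_\infty}\F_\infty , $$
where the right-hand side is the coCartesian fibration obtained by applying Lurie's unstraightening functor to $\F_\infty$. Next I would recall that \cite[Theorem 1.1]{GHN} identifies this $\infty$-categorical Grothendieck construction with the lax colimit: for any $\infty$-functor $\mathcal{G}: \mathcal{C} \lrar \Cat_\infty$, the coCartesian fibration $\int_{\mathcal{C}}\mathcal{G} \lrar \mathcal{C}$, regarded as an object of $\Cat_\infty$ via the forgetful functor, is the lax colimit of $\mathcal{G}$. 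Applying this with $\mathcal{C} = \M_\infty$ and $\mathcal{G} = \F_\infty$ gives an equivalence between $\int_{\M_\infty}\F_\infty$ and the lax colimit of $\F_\infty$. Composing the two equivalences yields $\left(\int_\M\F\right)_\infty \simeq \colim^{\mathrm{lax}}\F_\infty$, which is exactly the assertion.

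I do not expect any genuine obstacle here; the construction really is formal given the inputs. The one point deserving a word of care is that the equivalence of the preceding Proposition is natural over $\M_\infty$, so that the identification with $\int_{\M_\infty}\F_\infty$ is one of objects living over $\M_\infty$, and hence matches the way the lax-colimit comparison of \cite{GHN} is set up. This compatibility is already built into the statement of that Proposition, so nothing further is required.
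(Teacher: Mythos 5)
Your argument is exactly the paper's intended one: the corollary is stated as an ``immediate'' consequence of the preceding Proposition together with \cite[Theorem 1.1]{GHN}, and you have simply spelled out that chain. Nothing is missing.
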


\section{Functoriality and Invariance}\label{s:invariance}

In this section we will discuss the behaviour of the integral model structure under various constructions. We begin by observing that if $\M$ is a category then a morphism in the $(2,1)$-category $\Fun(\M,\ModCat)$ are given by pseudo-natural transformations $(\sig,\tau):\F \Rightarrow \G$ such that for every $A \in \M$ the adjunction
$$ \xymatrix{
\F(A) \ar[r]<1ex>^{\L} & \G(A) \ar[l]<1ex>_{\upvdash}^{\R} \\
}$$
is a Quillen adjunction. We will call such pseudo-natural transformations \textbf{Quillen transformations}. Now let $\M$ be a model category. We will denote by 
$$ \Fun_{\PR}(\M,\ModCat) \subseteq \Fun(\M,\ModCat) $$ 
the full $(2,1)$-category spanned by the proper relative functors. We recall the following observation for future reference.

\begin{obs}\label{o:functoriality}
Let $\M$ be a model category. If $(\sig,\tau): \F \Rightarrow \G$ is a Quillen transformation then the induced adjunction
$$ \xymatrix{
\int_\M\F \ar[dr]\ar[rr]<1ex>^{\sig_*} && \int_\M\G \ar[ll]<1ex>_{\upvdash}^{\tau_*} \ar[dl] \\
& \M & \\
}$$
is simply given by $\sig_*(A,X) = (A,\sig(X))$ and $\tau_*(A,X) = (A,\tau_*(X))$. It is then clear that under the respective integral model structures the adjunction $(\sig_*,\tau_*)$ is a Quillen adjunction. 
\end{obs}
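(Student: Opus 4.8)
The plan is to unwind the definitions of the integral model structures on $\int_\M\F$ and $\int_\M\G$ (Definition~\ref{d:model}) and check directly that the functor $\sig_*$ preserves cofibrations and trivial cofibrations, which is one of the equivalent characterizations of a Quillen adjunction. Since $(\sig,\tau)$ is a Quillen transformation, for each $A \in \M$ we have a Quillen adjunction $\L_A \dashv \R_A$ between $\F(A)$ and $\G(A)$; by pseudo-naturality, for a morphism $f: A \lrar B$ in $\M$ there is a coherence isomorphism $\L_B \circ f_!^{\F} \cong f_!^{\G} \circ \L_A$ (and dually on the right adjoints), which is what makes $\sig_*$ and $\tau_*$ well-defined functors in the first place, as recalled in the statement.

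First I would verify the explicit formula: $\sig_*(A,X) = (A,\L_A(X))$ on objects, and on a morphism $(f,\phi): (A,X) \lrar (B,Y)$, where $\phi: f_!^{\F}X \lrar Y$ in $\F(B)$, the image is $(f, \L_B(\phi))$ post-composed with the coherence iso $f_!^{\G}\L_A(X) \cong \L_B f_!^{\F}(X)$, i.e. a morphism $f_!^{\G}\L_A(X) \lrar \L_B(Y)$ in $\G(B)$. The formula for $\tau_*$ is analogous using the right adjoints, and the adjunction $\sig_* \dashv \tau_*$ over $\M$ is the special case of the base-change construction of \S\ref{ss:left-right} with $\L \dashv \R$ the identity adjunction on $\M$, as noted in the remark following Remark~\ref{r:explicit}; so no separate verification of the adjunction is needed.

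Next I would check cofibrations. If $(f,\phi)$ is a cofibration in $\int_\M\F$, then by Definition~\ref{d:model} $f$ is a cofibration in $\M$ and $\phi: f_!^{\F}X \lrar Y$ is a cofibration in $\F(B)$. Its image under $\sig_*$ has first component $f$ (still a cofibration in $\M$) and second component the composite of the iso $f_!^{\G}\L_A(X) \xrightarrow{\cong} \L_B f_!^{\F}(X)$ with $\L_B(\phi)$; since $\L_B$ is left Quillen it preserves cofibrations, so $\L_B(\phi)$ is a cofibration in $\G(B)$, and composing with an isomorphism keeps it a cofibration. Hence $\sig_*(f,\phi)$ is a cofibration. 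For trivial cofibrations I would invoke Lemma~\ref{characterization}(i): a morphism is in $\Cof \cap \W$ iff $f$ is a trivial cofibration in $\M$ and $\phi$ is a trivial cofibration in the fiber; then the same argument applies, using that $\L_B$ preserves trivial cofibrations and that Lemma~\ref{characterization}(i) holds for $\int_\M\G$ as well (both $\F$ and $\G$ being proper relative). This already shows $\sig_* \dashv \tau_*$ is a Quillen adjunction.

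There is no serious obstacle here: the only mild subtlety is bookkeeping with the pseudo-naturality coherence isomorphisms, ensuring that the second component of $\sig_*(f,\phi)$ is genuinely a cofibration (resp. trivial cofibration) in $\G(B)$ after inserting the iso, and that one is entitled to use Lemma~\ref{characterization} for $\G$ — which is legitimate precisely because membership of $\Fun_{\PR}$ is what we are assuming. Optionally, for symmetry one can also check that $\tau_*$ preserves fibrations and trivial fibrations using Definition~\ref{d:model}(2), Lemma~\ref{characterization}(ii), and the fact that each $\R_A$ is right Quillen; this is dual and can be left to the reader.
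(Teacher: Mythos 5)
Your proposal is correct and fills in exactly the routine verification the paper alludes to with ``it is then clear'': the explicit description of $\sig_*$ on morphisms, the use of Definition~\ref{d:model} for cofibrations, and Lemma~\ref{characterization}(i) (legitimate since both $\F$ and $\G$ are assumed proper relative) for trivial cofibrations, with the coherence isomorphism from pseudo-naturality absorbed harmlessly since pre/post-composition by an isomorphism preserves (trivial) cofibrations. This is the same approach the paper has in mind; no divergence to report.
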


\subsection{Base change}
Recall the notions of \textbf{left and right morphisms} discussed in \S\ref{d:left-right}. We wish to address the analogous setting for diagrams of model categories.

\begin{defn}
Let $\M,\N$ be model categories and 
$$ \xymatrix{
\M \ar[dr]_\F\ar[rr]<1.3ex>^{\L} & & \N\ar[dl]^\G\ar[ll]<0.7ex>^{\R}_\upvdash \\ &  \ModCat & \\}$$
a (not necessarily commutative) diagram such that the horizontal pair is a Quillen adjunction and  $\F,\G$ are proper relative functors. We will say that a left morphism $\F \Rightarrow \G \circ \L$ is a \textbf{left Quillen morphism} if the associated adjunctions
$$\xymatrix{
\Sig^L_A:\F(A) \ar[r]<1ex> & \G(\L(A)):\Sig_A^R\ar[l]<1ex>_(0.55){\upvdash}.}$$
are Quillen adjunction. Similarly we define \textbf{right Quillen morphisms}.
\end{defn}

\begin{define}\label{d:left-right-equiv}
Let $\M,\N,\F,\G$ be as above. We will say that a left Quillen morphism
$$\xymatrix{
\Sig^L_A:\F(A) \ar[r]<1ex> & \G(\L(A)):\Sig^R_A\ar[l]<1ex>_(0.55){\upvdash}.}$$
indexed by $A \in \M$ is a \textbf{left Quillen equivalence} if $\Sig^L_A \dashv \Sig^R_A$ is a Quillen equivalence for every cofibrant $A \in \M$. Similarly, we will say that a right Quillen morphism
$$\xymatrix{
\Theta^L_B:\F(\R(B)) \ar[r]<1ex> & \G(B) :\Theta^R_B\ar[l]<1ex>_(0.45){\upvdash}}$$
indexed by $B \in \N$ is a \textbf{right Quillen equivalence} if $\Theta^L_B \dashv \Theta^R_B$ is a Quillen equivalence for every fibrant $B \in \N$
\end{define}

\begin{thm}\label{qa}
Let $\M,\N$ be model categories and
$$ \xymatrix{
\M \ar[dr]_\F\ar[rr]<1.2ex>^{\L} & & \N\ar[dl]^\G\ar[ll]<0.6ex>^{\R}_\upvdash \\ &  \ModCat & \\}$$
a diagram as above in which the horizontal pair is a Quillen adjunction and $\F,\G$ are proper relative functors. Let $\F \Rightarrow \G \circ \L$ be a left Quillen morphism given by a compatible family of adjunctions $\left(\Sig^L_A, \Sig^R_A\right)_{A \in \M}$. Then the induced adjunction
$$
\xymatrix{\Phi^L:\int_{\M}\F\ar[r]<1ex> & \int_{\N}\G:\Phi^R\ar[l]<1ex>_(0.5){\upvdash}.}
$$
is a Quillen adjunction. Furthermore, if the left Quillen morphism is a left Quillen equivalence then $\left(\Phi^L,\Phi^R\right)$ is a Quillen equivalence. The same result holds for the adjunction induced by a right Quillen morphism (see Remark~\ref{r:dual}).
\end{thm}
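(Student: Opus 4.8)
The plan is to verify the Quillen adjunction and Quillen equivalence statements separately, using the explicit descriptions of $\Phi^L$ and $\Phi^R$ from \S\ref{ss:left-right} together with the characterization of (trivial) cofibrations in the integral model structure given by Lemma~\ref{characterization}. First I would show $\Phi^L$ is left Quillen. A cofibration $(i,\iota):(A,X)\lrar(A',X')$ in $\int_\M\F$ consists of a cofibration $i:A\lrar A'$ in $\M$ and a cofibration $\iota:i_!X\lrar X'$ in $\F(A')$. Applying $\Phi^L$ gives the pair $\big(\L(i),\Sig^L_{A'}(\iota)\circ(\text{comparison})\big)$; since $\L$ is left Quillen, $\L(i)$ is a cofibration in $\N$, and since $\Sig^L_{A'}$ is left Quillen and the pseudo-naturality square relating $\L(i)_!\Sig^L_A$ and $\Sig^L_{A'}i_!$ is filled by an \emph{isomorphism}, the second component is a cofibration in $\G(\L(A'))$. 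Hence $\Phi^L$ preserves cofibrations. For trivial cofibrations the same argument applies using part (i) of Lemma~\ref{characterization}: $(i,\iota)$ trivial means $i$ is a trivial cofibration and $\iota$ a trivial cofibration, both of which are preserved by the left Quillen functors $\L$ and $\Sig^L_{A'}$. (Equivalently one could check $\Phi^R$ preserves fibrations and trivial fibrations directly from Remark~\ref{r:explicit} and part (ii) of the Lemma; I would do whichever reads more cleanly.)

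For the Quillen equivalence, assuming now that each $\Sig^L_A\dashv\Sig^R_A$ is a Quillen equivalence for cofibrant $A$ and that $\L\dashv\R$ is a Quillen equivalence, I would use the standard criterion: $\Phi^L\dashv\Phi^R$ is a Quillen equivalence iff $\Phi^L$ reflects weak equivalences between cofibrant objects and for every fibrant $(B,Y)\in\int_\N\G$ the derived counit $\Phi^L\big(\Phi^R(B,Y)^{\cof}\big)\lrar(B,Y)$ is a weak equivalence. A cofibrant object of $\int_\M\F$ is a pair $(A,X)$ with $A$ cofibrant in $\M$ and $X$ cofibrant in $\F(A)$ (this follows from the factorization/lifting analysis already carried out in the proof of Theorem~\ref{model structure}, applied to the map from the initial object). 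Given a map $(f,\vphi):(A,X)\lrar(A',X')$ between such cofibrant objects, $\Phi^L(f,\vphi)$ being a weak equivalence means $\L(f)$ is a weak equivalence in $\N$ and $\Sig^L_{A'}(\vphi)$ (composed with the iso) is a weak equivalence in $\G(\L(A'))$ on cofibrant inputs; since $\L$ reflects weak equivalences between cofibrant objects, $f$ is a weak equivalence, hence $f_!\dashv f^*$ is a Quillen equivalence (relativeness of $\F$), and since $\Sig^L_{A'}$ is a Quillen equivalence it reflects weak equivalences between cofibrant objects, giving that $f_!X\lrar X'$ — and thus $\vphi$ — is a weak equivalence in $\F(A')$; so $(f,\vphi)\in\W$.

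For the counit condition, fix a fibrant $(B,Y)$, so $B$ is fibrant in $\N$ and $Y$ is fibrant in $\G(B)$. By Remark~\ref{r:dual}'s formula, $\Phi^R(B,Y)=\big(\R(B),\Sig^R_{\R(B)}(\eps^*Y)\big)$. A cofibrant replacement in $\int_\M\F$ of this object can be built fiberwise: first replace $\R(B)$ by a cofibrant $\widetilde{A}\lrar\R(B)$ in $\M$, then take a cofibrant replacement of the pullback to $\widetilde{A}$ inside $\F(\widetilde{A})$; chasing through the explicit description of $\Phi^L$ and of the counit, the map $\Phi^L(\Phi^R(B,Y)^{\cof})\lrar(B,Y)$ has first component the derived counit $\L(\R(B)^{\cof})\lrar B$ of $\L\dashv\R$ — a weak equivalence since $\L\dashv\R$ is a Quillen equivalence and $B$ is fibrant — and second component, after transport along the structural isomorphisms of the pseudo-natural transformation, the derived counit of the Quillen equivalence $\Sig^L_{\widetilde{A}}\dashv\Sig^R_{\widetilde{A}}$ evaluated at a fibrant object, hence also a weak equivalence. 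Since weak equivalences in $\int_\N\G$ are detected by a weak equivalence in $\N$ together with a weak equivalence in the fiber over the target (Observation~\ref{o:sym}), this shows the derived counit is a weak equivalence, completing the proof. The dual statement for right Quillen (equivalence) morphisms follows by the symmetric argument using Remark~\ref{r:dual}, part (ii) of Lemma~\ref{characterization}, and the right-handed version of the cofibrant/fibrant replacement bookkeeping.

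I expect the main obstacle to be precisely this last bookkeeping step: writing a cofibrant replacement of $\Phi^R(B,Y)$ in $\int_\M\F$ explicitly enough that one can identify the two components of the derived counit with honest derived counits of the constituent Quillen equivalences, keeping careful track of the coherence isomorphisms $\sig$ appearing in the pseudo-natural transformation $\F\Rightarrow\G\circ\L$ and of the base-change isomorphisms $\L(f)^*\eps^*\cong f^*\R(\cdot)$ used in Remark~\ref{r:explicit}. None of the individual facts are hard, but assembling them coherently — and making sure ``fibrant'' and ``cofibrant'' hypotheses land where the definition of Quillen equivalence needs them — is where the real work lies.
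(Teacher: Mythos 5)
Your proposal is correct, and the Quillen adjunction part matches the paper's proof essentially verbatim (both reduce to Lemma~\ref{characterization}). For the Quillen equivalence, however, you take a genuinely different route. The paper applies the adjunction criterion directly: it fixes a cofibrant $(A,X)\in\int_\M\F$ and a fibrant $(B,Y)\in\int_\N\G$, takes an arbitrary map $(f,\vphi)\colon (A,X)\lrar \Phi^R(B,Y)$, and uses Remark~\ref{r:explicit} to compute the adjoint $(f^{\ad},\psi)\colon\Phi^L(A,X)\lrar(B,Y)$ explicitly, then observes that $f$ is a weak equivalence iff $f^{\ad}$ is (since $\L\dashv\R$ is a Quillen equivalence) and that $\vphi^{\ad}$ is a weak equivalence iff $\psi$ is (since the composite adjunction $(f^{\ad})_!\circ\Sig^L_A\dashv\Sig^R_A\circ(f^{\ad})^*$ is a Quillen equivalence). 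You instead invoke Hovey's criterion (reflection of weak equivalences between cofibrant objects plus derived counit at fibrants) and then construct an explicit cofibrant replacement of $\Phi^R(B,Y)$ fiberwise. Both arguments are valid; the paper's is somewhat cleaner precisely because it avoids the ``bookkeeping'' you flag at the end — all the coherence and base-change isomorphisms are absorbed into the single displayed chain of isos in Remark~\ref{r:explicit}, and there is no need to track a cofibrant replacement of $\Phi^R(B,Y)$ across the pseudo-naturality squares. One minor remark on your reflection argument: the invocation of relativeness of $\F$ (``hence $f_!\dashv f^*$ is a Quillen equivalence'') is superfluous there — since $X$ is cofibrant, the weak-equivalence condition on $(f,\vphi)$ is just that $f$ and $\vphi\colon f_!X\lrar X'$ are weak equivalences, and the latter follows from $\Sig^L_{A'}$ reflecting weak equivalences between the cofibrant objects $f_!X$ and $X'$ without any appeal to $f_!\dashv f^*$.
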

\begin{proof}
Using Lemma~\ref{characterization} it is straightforward to verify that $\Phi^L$ preserves cofibrations and trivial cofibrations and hence is a left Quillen functor.

Now assume that $\Sig^L_A \dashv \Sig^R_A$ is a Quillen equivalence whenever $A$ is cofibrant. Let $(A,X) \in \int_\M \F$ be a cofibrant object, $(B,Y) \in \int_\N \G$ a fibrant object and consider a 
map
$$ (f,\vphi): (A,X) \lrar \left(\R(B),\Sig^R_{\R(B)}(\eps_{!}Y)\right) $$
where $f: A \lrar \R(B)$ is a morphism in $\M$ and 
$$ \vphi: f_{!}X \lrar \Sig^R_B(\eps_{!}Y) $$ 
is a morphism in $\F(\R(B))$. Since $X$ is cofibrant in $\F(A)$ we get $(f,\vphi)$ is a weak equivalence if and only if $f$ is a weak equivalence in $\M$ and $\vphi$ is a weak equivalence in $\F(\R(B))$. Since $\F$ is relative this is equivalent to $f$ being a weak equivalence and 
$$ \vphi^{\ad}: X \lrar f^*\left(\Sig^R_{\R(B)}(\eps^*Y)\right) \;\cong\; \Sig^R_A\left(\L(f)^*(\eps^*Y)\right) \;\cong\; \Sig^R_A (f^{\ad})^*Y $$
being a weak equivalence, where the first isomorphism is given by the structure of $\Sig^R$ as a pseudo-natural transformation and $f^{\ad} = \eps \circ \L(f)$ is the adjoint of $f$. According to Remark~\ref{r:explicit} the adjoint morphism to $(f,\vphi)$ is given by the map
$$ (f^{\ad},\psi): (\L(A),\Sig^L_A(X)) \lrar (B,Y) $$
where
$$ \psi: (f^{\ad})_{!}\Sig^L_A(X) \lrar Y $$ 
is the adjoint of $\vphi^{\ad}$ with respect to $\left(f^{\ad}\right)_{!} \circ \Sig^L_A   \dashv  \Sig^R_A \circ \left(f^{\ad}\right)^*$. Since $\L \dashv \R$ is a Quillen equivalence we see that $f$ is a weak equivalence if and only if $f^{\ad}$ is a weak equivalence. Furthermore, in this case the adjunction $\left(f^{\ad}\right)_{!} \circ \Sig^L_A   \dashv  \Sig^R_A \circ \left(f^{\ad}\right)^*$ is a Quillen equivalence by our assumptions and so $\vphi^{\ad}$ is a weak equivalence if and only if $\psi$ is a weak equivalence. This shows that $\Phi^L \dashv \Phi^R$ is a Quillen equivalence. The proof for the adjunction induced by a right Quillen morphism is completely analogous.
\end{proof}

\begin{rem}
The invariance property established in Theorem~\ref{qa} depends crucially on the fact that $\F$ is \textbf{relative}. For non-relative functors this invariance may fail drastically, as illustrated in Example~\ref{e:non-relative} below.
\end{rem}

\begin{example}\label{e:non-relative}
Let $\M=\{\emptyset\overset{\sim}{\lrar} \ast\}$ be the category with two objects and one non-identity morphism. It is straightforward to check that $\M$ has all small limits and colimits. We may endow $\M$ with a model structure in which every morphism is a weak equivalence and a cofibration, and only the isomorphisms are fibrations. A functor $\F:\M \lrar \ModCat$ can then be described as a Quillen adjunction
$$ \xymatrix{
\F(\emptyset)\ar[r]<1ex>^{\L_{\F}} & \F(\ast) \ar[l]<1ex>^{\R_{\F}}_{\upvdash}}
$$
Note that $\F$ as above is relative precisely when the Quillen adjunction $\L_{\F} \dashv \L_{\R}$ is a Quillen equivalence. Now assume that $\F$ is not necessarily relative, but is such that the choice of weak equivalences, cofibrations and fibrations appearing in Definition~\ref{d:model} endows $\int_\M \F$ with a model structure. This happens, for example, whenever $\F(\emptyset)$ is the trivial model category with one object and no non-identity morphisms. The inclusion $\{\ast\} \subseteq \M$ (where the left hand side carries a trivial model structure) is a right Quillen functor and induces a Quillen adjunction
$$ \xymatrix{\int_\M\F\ar[r]<1ex>^-{\gam_\ast} & \int_{\{\ast\}} \F = \F(\ast) \ar[l]<1ex>^-{\iota_\ast}_-{\upvdash}}$$
where $\gam_\ast(\emptyset,X) = \L_\F(X)$, $\gam_\ast(\ast,X) = X$ and $\iota_\ast(X) = (\ast,X)$. The counit of this adjunction is an isomorphism and its derived unit is a weak equivalence by Definition~\ref{d:model}. We hence see that $\iota_\ast \dashv \gam_\ast$ is a Quillen equivalence. On the other hand, the inclusion $\{\emptyset\} \subseteq \M$ is a left Quillen functor, inducing a Quillen adjunction
$$ \xymatrix{\F(\emptyset) = \int_{\{\emptyset\}} \F \ar[r]<1ex>^-{\iota_\emptyset} & \int_\M\F \ar[l]<1ex>^-{\gam_\emptyset}_-{\upvdash}}$$
where $\gam_\emptyset(\emptyset,X) = X$, $\gam_\emptyset(\ast,X) = \R_{\F}(X)$ and $\iota_\emptyset(X) = (\emptyset,X)$. One can then verify that the composite
$$ \xymatrix{\F(\emptyset)\ar[r]<1ex>^{\iota_\emptyset} & \int_\M\F \ar[l]<1ex>^{\gam_\emptyset}_{\upvdash} \ar[r]<1ex>^{\gam_\ast} & \F(\ast) \ar[l]<1ex>^{\iota_\ast}_{\upvdash}}$$
coincides with $\L_\F \dashv \R_\F$. We hence see that if $\F$ is \textbf{not relative} then $\gam_{\emptyset} \dashv \iota_{\emptyset}$ cannot be a Quillen equivalence, despite the fact that the inclusion $\{\emptyset\} \subseteq \M$ is a Quillen equivalence.
\end{example}

\subsection{A Fubini Theorem}\label{ss:fubini}

Let $\M,\N$ be categories and $\F:\M\times \N\lrar \Cat$ a functor. For each $A\in\M$ we have a functor $\F^A:\{A\}\times\N\lrar \Cat$ and for each $B\in\N$ we have a functor $\F_B:\M\times\{B\}\lrar \Cat$. It is immediate to notice that we have an \textbf{isomorphism} of categories
\begin{equation}\label{e:discrete Fubini}\displaystyle\mathop{\int}_{(A,B)\in\M\times \N}\F(A,B)\cong \mathop{\int}_{A\in \M}\mathop{\int}_{B\in \N}\F^A(B)\cong\mathop{\int}_{B\in \N}\mathop{\int}_{A\in \M}\F_B(A) \end{equation}
The purpose of this section is to extend the above results to the setting of model categories. Let $\M,\N$ be model categories and $\F:\M\times \N\lrar \ModCat$ a functor where $\M\times \N$ is endowed with the product model structure. Since for any $(f,g):(A,B)\lrar (A',B')$ in $\M\times \N$, $(f,g)=(f,\Id_B)\circ (\Id_{A'},g)$ we see that $\F$ is proper relative iff for each $A\in\M$ and each $B\in\N$, $F^A$ and $F_B$ are proper and relative. We now have the following:

\begin{pro}["Fubini Theorem"]\label{p:fubini}
Let $\M,\N$ be model categories and \\$\F:\M\times\N\lrar \ModCat$ a functor which is proper and relative. Then the functors  $$\displaystyle\mathop{\int}_{B \in \N}\F^{(-)}(B):\M\lrar \ModCat$$ and 
$$\displaystyle\mathop{\int}_{A \in \M}\F_{(-)}(A):\N\lrar \ModCat$$ are proper and relative and we have natural isomorphisms of model categories 

$$\displaystyle\mathop{\int}_{(A,B) \in \M\times \N}\F(A,B)\cong \mathop{\int}_{A\in \M}\left(\mathop{\int}_{B \in \N}\F^A(B)\right )\cong\mathop{\int}_{B\in \N}\left(\mathop{\int}_{A \in \M}\F_B(A)\right)$$
\end{pro}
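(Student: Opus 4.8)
The plan is to reduce the statement to the three ingredients it really contains: first, that the underlying categorical Fubini isomorphism~\eqref{e:discrete Fubini} holds on the nose; second, that this isomorphism is compatible with the three distinguished classes of morphisms in the integral model structures on both sides; and third, that the two intermediate functors $\int_{B\in\N}\F^{(-)}(B)$ and $\int_{A\in\M}\F_{(-)}(A)$ are themselves proper relative, so that the iterated integral model structures on the right-hand side are even defined (via Theorem~\ref{model structure}). Since everything in sight is strict, the isomorphism of underlying categories is purely formal and I would simply cite~\eqref{e:discrete Fubini}; the content is in matching the model structures.

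First I would establish that $\int_{B\in\N}\F^{(-)}(B):\M\lrar\ModCat$ is proper and relative. For a morphism $f:A\lrar A'$ in $\M$, the induced adjunction on $\int_{\N}\F^A(\bullet)\lrar\int_{\N}\F^{A'}(\bullet)$ is the one associated, via Observation~\ref{o:functoriality} (functoriality of the integral construction in Quillen transformations), to the Quillen transformation $\F^A\Rightarrow\F^{A'}$ given levelwise over $B\in\N$ by the adjunction $\F(f,\Id_B):\F(A,B)\lrar\F(A',B)$. If $f$ is a weak equivalence, then each $\F(f,\Id_B)$ is a Quillen equivalence because $\F$ is relative, so the induced map on integrals is a Quillen equivalence by Theorem~\ref{qa} (applied with the identity base change $\L\dashv\R$, i.e.\ the case where left/right morphisms are just morphisms in $\Fun(\N,\ModCat)$); hence $\int_{\N}\F^{(-)}(\bullet)$ is relative. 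Likewise, if $f$ is a trivial cofibration (resp.\ trivial fibration) in $\M$, then each $\F(f,\Id_B)_!$ (resp.\ $\F(f,\Id_B)^*$) preserves weak equivalences by left (resp.\ right) properness of $\F$, and an easy check using Definition~\ref{d:model} and Lemma~\ref{characterization} shows that $\sigma_*$ (resp.\ $\tau_*$) then preserves weak equivalences of the integral model structures; so $\int_{\N}\F^{(-)}(\bullet)$ is proper. The same argument with the roles of $\M$ and $\N$ swapped handles $\int_{A\in\M}\F_{(-)}(A)$.

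Next I would identify the model structures under the isomorphism~\eqref{e:discrete Fubini}. It suffices to show that a morphism on the left-hand side is a weak equivalence (resp.\ cofibration, resp.\ fibration) in the sense of Definition~\ref{d:model} applied to $\F:\M\times\N\lrar\ModCat$ if and only if its image on, say, $\int_{A\in\M}(\int_{B\in\N}\F^A(B))$ is a weak equivalence (resp.\ cofibration, resp.\ fibration) for the iterated integral model structure. A morphism in the iterated Grothendieck construction unwinds to a triple $(f,g,\phi)$ with $f:A\lrar A'$ in $\M$, $g:B\lrar B'$ in $\N$, and $\phi:\F(f,g)_!X\lrar X'$ in $\F(A',B')$, and the cofibration condition on the outer integral says: $f\in\Cof_\M$, and the image of $(g,\phi)$ in $\int_\N\F^{A'}(\bullet)$ is a cofibration, i.e.\ $g\in\Cof_\N$ and $\phi$ (after the canonical identification $\F(f,\Id)_!\F(\Id,g)_!X\cong\F(f,g)_!X$) is a cofibration in $\F(A',B')$ — which is exactly the cofibration condition for $\F$ on $\M\times\N$ using that $\Cof_{\M\times\N}=\Cof_\M\times\Cof_\N$. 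The fibration case is the symmetric statement using $\phi^{\ad}$, $\Fib_{\M\times\N}=\Fib_\M\times\Fib_\N$, and the adjunction identities, together with Lemma~\ref{characterization} to handle the passage between the ``$f_!$'' and ``$f^*$'' descriptions. The weak equivalence case is the one requiring slightly more care, because the definition involves a cofibrant replacement of the source; here I would invoke Observation~\ref{o:sym} (valid since all functors in sight are relative) and the fact that cofibrant objects in the iterated Grothendieck construction project to cofibrant objects at each stage, so the nested cofibrant-replacement condition collapses to the single condition for $\M\times\N$. The third isomorphism is proved identically by symmetry.

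The main obstacle, and the only place where genuine bookkeeping is needed, is the third step: carefully tracking the coherence isomorphisms $\F(f,\Id)_!\circ\F(\Id,g)_!\cong\F(f,g)_!\cong\F(\Id,g')_!\circ\F(f,\Id)_!$ supplied by $\F$ being a pseudo-functor on the product, and checking that the weak-equivalence condition on the iterated integral — which a priori requires cofibrantly replacing at the \emph{outer} stage, hence replacing an object of $\int_\N\F^A(\bullet)$, which need not be replaced ``fiberwise'' — agrees with the single-step condition for $\F$ on $\M\times\N$. I expect this to go through cleanly precisely because of Observation~\ref{o:sym}: for a relative functor one may compute weak equivalences symmetrically, i.e.\ via $X\lrar f^*(Y^{\fib})$ rather than $f_!(X^{\cof})\lrar Y$, and a fibrant replacement in $\int_\N\F^A(\bullet)$ is fibrant fiberwise by construction of the integral model structure, so the two formulations match. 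Everything else is a formal consequence of the strictness of~\eqref{e:discrete Fubini}, Observation~\ref{o:functoriality}, Theorem~\ref{model structure}, Theorem~\ref{qa}, and Lemma~\ref{characterization}.
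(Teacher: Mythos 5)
Your proof follows essentially the same outline as the paper's: relativeness of the intermediate functors via Theorem~\ref{qa}, properness via a direct component-wise check, and the coincidence of the model structures via the strict isomorphism~\eqref{e:discrete Fubini} together with Lemma~\ref{characterization}. The one place where you diverge is in the final identification step, and there you make things harder for yourself than necessary. You worry, correctly, that the weak-equivalence condition on the iterated integral involves a cofibrant (or fibrant) replacement taken in $\int_\N\F^{A}(\bullet)$, which a priori moves the $\N$-coordinate as well as the fibre coordinate, and you then try to argue this matches the single-step condition via Observation~\ref{o:sym} and a claim that such replacements are ``fibrant fiberwise.'' That claim is true but not obviously sufficient: a fibrant replacement of $(B',Y)$ in $\int_\N\F^{A'}(\bullet)$ will in general replace $B'$ by some $B''$ as well, and one still has to track the interaction of $f^*$ with the resulting $\pi$-Cartesian lifts, so the bookkeeping you flag as the ``main obstacle'' is genuinely more delicate than your sketch acknowledges. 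The paper sidesteps this entirely: it suffices to match cofibrations and fibrations (which have replacement-free descriptions, and trivial ones via Lemma~\ref{characterization}), because a model structure is determined by those two classes — every weak equivalence factors as a trivial cofibration followed by a trivial fibration, by MC5 and two-out-of-three. Once cofibrations and fibrations are seen to agree under~\eqref{e:discrete Fubini} (a routine unwinding, exactly the part of your proof that is clean), the weak equivalences agree for free, and the delicate replacement argument you foresee is unnecessary. Aside from this, and the fact that the paper actually writes out the properness computation you label ``an easy check,'' the two proofs are the same.
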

\begin{proof}
We will prove that $\int_{B \in \N}\F^{(-)}(B)$ is proper and relative. The proof for $\int_{A \in \M}\F_{(-)}(A)$ is completely analogous. The fact that $\int_{B \in \N}\F^{(-)}(B)$ is relative follows from the invariance of the integral model structure (Theorem \ref{qa}). To see that $\int_{\N}\F^{(-)}$ is proper, let $f: A \lrar A'$ be a trivial cofibration and let
$$ (\Id_A,g,\vphi):(A,B,X)\lrar (A,B',X') $$ 
be a weak equivalence in $\mathop{\int}_{B \in \N}\F^A(B)$ where $g: B \lrar B'$ is a weak equivalence in $\N$ and
$$ \vphi: (\Id_A,g)_!X \lrar X' $$
is a morphism in $\F^A(B')$ such that the composite
$$ (\Id_A,g)_!(X^{\cof}) \lrar (\Id_A,g)_!X \lrar X' $$
is a weak equivalence in $\F^{A'}(B)$. Then $f_!(\Id_A,g,\vphi)$ can be identified with the map
$$ (\Id_{A'},g,\psi):(A',B,(f,\Id_B)_!(X))\lrar (A',B',(f,\Id_{B'})_!(X'))$$ 
in $\mathop{\int}_{B \in \N}\F^A(B)$ where 
$$ \psi: (\Id_{A'},g)_!(f,\Id_B)_!X = (f,\Id_{B'})_!(\Id_{A},g)_!X \lrar (f,\Id_{B'})_!X' $$
is given by $(f,\Id_{B'})_!\vphi$.

We then need to verify that the composite
$$ (f,\Id_{B'})_!(\Id_{A},g)_!X^{\cof} \lrar (f,\Id_{B'})_!(\Id_{A},g)_!X \lrar (f,\Id_{B'})_!X' $$

is a weal equivalence in $\F(A',B')$. But this now follows from the fact that $\F$ is proper and $(f,\Id_{B'}):(A,B') \lrar (A',B')$ is a trivial cofibration in $\M \times \N$.

The isomorphism of \ref{e:discrete Fubini} together with Lemma~\ref{characterization} now easily implies that all the above-mentioned model structures coincide.
\end{proof}

\section{Model fibrations}\label{s:model-fib}

Let $\M$ be a category equipped with three subcategories $\W_\M,\Cof_\M,\Fib_\M \subseteq \M$ which contain all objects. We shall refer to such objects as \textbf{pre-model categories}. Given a pre-model category $\M$, we will still refer to morphisms in $\W_\M$, $\Cof_\M$, $\Fib_\M$, $\Cof_\M \cap \W_\M$ and $\Fib_\M \cap \W_\M$ as weak equivalences, cofibrations, fibrations, trivial cofibrations and trivial fibrations respectively. A morphism of pre-model categories is an adjunction
$$\xymatrix{\M\ar^{\L}[r]<1ex> &N\ar^{\R}[l]<1ex>_{\upvdash}}$$
such that $\L$ preserves cofibrations and trivial cofibrations and $\R$ preserves fibrations and trivial fibrations. We will refer to such adjunctions as \textbf{Quillen adjunctions}. 

We now observe that the notion of a \textbf{proper relative functor} (see Definitions~\ref{d:relative} and~\ref{d:proper}) can be extended to the case where the domain is a pre-model category \textbf{verbatim}. We will denote by 
$$ \Fun_{\PR}(\M,\ModCat) \subseteq \Fun(\M,\ModCat) $$ 
the full $(2,1)$-subcategory spanned by proper relative functors. 

Our goal in this section is to understand the Grothendieck construction of a proper relative functor $\F: \M \lrar \ModCat$ in the case where $\M$ is a pre-model category. We start by formulating a relative counterpart of the model category axioms. For this we will need a relative counterpart of the notion of a weak factorization system.

\begin{defn}\label{d:rel-wfs}
Let $\M, \N$ be categories, each equipped with two classes of maps $\L_\M,\R_\M \subseteq \M^{\Del^1}$ and $\L_\N,\R_\N \subseteq \N^{\Del^1}$ containing all the identities. Let $\pi:\N\lrar \M$ be a functor such that $\pi(\L_{\N}) \subseteq \L_\M$ and $\pi(\R_\N) \subseteq \R_\M$. We will say that $(\L_\N,\L_\M)$ constitute a \textbf{$\pi$-weak factorization systems relative to $(\L_\M,\R_\M)$} if the following holds:
\begin{enumerate}
\item
$\L_{\N}$ (resp. $\R_{\N}$) contains any retract $f$ of a morphism in $\L_{\N}$ (resp. $\R_{\N}$) provided that $\pi(f)$ is contained in $\L_\M$ (resp. $\R_\M$).
\item
For every morphism $\vphi: X \lrar Y$ in $\N$ and every factorization of $\pi\vphi$ as $\pi \vphi = g \circ h$ such that $h \in \L_\M$ and $g \in \R_\M$ there exists a factorization of $\vphi$ as $\vphi = \psi \circ \eta$ such that $\eta \in \L_\N,\psi \in \R_\N$ and such that $\pi\psi = g$ and $\pi\eta = h$.
\item
For every square in $\N$ of the form
$$ \xymatrix{
X \ar[r]\ar_{\psi}[d] & Z \ar^{\eta}[d] \\
Y \ar[r] & W \\
}$$
such that $\psi \in \L_\N$ and $\eta \in \R_\N$ and for every dashed lift
$$ \xymatrix{
\pi(X) \ar[r]\ar_{\pi\psi}[d] & \pi(Z) \ar^{\pi\eta}[d] \\
\pi(Y) \ar[r]\ar@{-->}^{u}[ur] & \pi(W) \\
}$$
there exists a dashed lift
$$ \xymatrix{
X \ar[r]\ar_{\psi}[d] & Z \ar^{\eta}[d] \\
Y \ar[r]\ar@{-->}^{\gam}[ur] & W \\
}$$
such that $\pi\gam = u$.
\end{enumerate}
\end{defn}

\begin{lem}\label{l:wfs-comp}
Let $\M,\M',\M''$ be three categories with corresponding pairs of classes of morphisms $\L_\M,\R_\M \subseteq \left(\M\right) ^{\Del^1}$, $\L_{\M'},\R_{\M'} \subseteq (\M')^{\Del^1}$ and $\L_{\M''},\R_{\M''} \subseteq (\M'')^{\Del^1}$, containing all the identities. Let $\pi: \M \lrar \M'$ and $\pi': \M' \lrar \M''$ be functors which preserve these classes of morphisms. If $(\L_\M,\R_\M)$ is a $\pi$-weak factorization system with respect to $(\L_{\M'},\R_{\M'})$ and $(\L_{\M'},\R_{\M'})$ is a $\pi'$-weak factorization system with respect to $(\L_{\M''},\R_{\M''})$ then $(\L_\M,\R_\M)$ is a $(\pi' \circ \pi)$-weak factorization system with respect to $(\L_{\M''},\R_{\M''})$.
\end{lem}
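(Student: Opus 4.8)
The plan is to verify the three defining conditions of Definition~\ref{d:rel-wfs} for the pair $(\L_\M,\R_\M)$ with respect to the composite functor $\pi'\circ\pi$ and the target classes $(\L_{\M''},\R_{\M''})$. Throughout, one uses freely that $\pi$ sends $\L_\M$ into $\L_{\M'}$ and $\R_\M$ into $\R_{\M'}$, that $\pi'$ sends $\L_{\M'}$ into $\L_{\M''}$ and $\R_{\M'}$ into $\R_{\M''}$, and hence that $\pi'\pi$ sends $\L_\M$ into $\L_{\M''}$ and $\R_\M$ into $\R_{\M''}$; this is the hypothesis needed even to state that $(\L_\M,\R_\M)$ is a candidate $(\pi'\circ\pi)$-weak factorization system.

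For condition $(1)$: suppose $f$ is a retract of a morphism in $\L_\M$ (in the arrow category $\M^{\Del^1}$) and that $(\pi'\circ\pi)(f)\in\L_{\M''}$. I would first observe that $\pi(f)$ is a retract of a morphism in $\L_{\M'}$, since $\pi$ is a functor and hence induces a functor on arrow categories carrying the retract diagram for $f$ to a retract diagram for $\pi(f)$; moreover $\pi'(\pi(f)) = (\pi'\circ\pi)(f)\in\L_{\M''}$. Applying condition $(1)$ for the $\pi'$-weak factorization system $(\L_{\M'},\R_{\M'})$ relative to $(\L_{\M''},\R_{\M''})$ gives $\pi(f)\in\L_{\M'}$. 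Now $f$ is a retract (in $\M^{\Del^1}$) of a morphism in $\L_\M$ with $\pi(f)\in\L_{\M'}$, so condition $(1)$ for the $\pi$-weak factorization system $(\L_\M,\R_\M)$ relative to $(\L_{\M'},\R_{\M'})$ yields $f\in\L_\M$. The argument for retracts into $\R_\M$ is dual.

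For condition $(2)$: let $\vphi:X\lrar Y$ be a morphism in $\M$ and suppose $(\pi'\circ\pi)\vphi = g\circ h$ with $h\in\L_{\M''}$ and $g\in\R_{\M''}$. Here the idea is to lift the factorization downward one floor at a time. Since $\pi'$ is the relevant functor at the top floor, and $(\pi'\circ\pi)\vphi = \pi'(\pi\vphi)$, condition $(2)$ for $(\L_{\M'},\R_{\M'})$ relative to $(\L_{\M''},\R_{\M''})$ applied to the morphism $\pi\vphi$ of $\M'$ and the factorization $g\circ h$ produces a factorization $\pi\vphi = \psi'\circ\eta'$ with $\eta'\in\L_{\M'}$, $\psi'\in\R_{\M'}$, $\pi'\eta' = h$ and $\pi'\psi' = g$. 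Now apply condition $(2)$ for $(\L_\M,\R_\M)$ relative to $(\L_{\M'},\R_{\M'})$ to $\vphi$ itself and the factorization $\pi\vphi = \psi'\circ\eta'$: this gives $\vphi = \psi\circ\eta$ with $\eta\in\L_\M$, $\psi\in\R_\M$, $\pi\eta = \eta'$ and $\pi\psi = \psi'$. Then $(\pi'\circ\pi)\eta = \pi'\eta' = h$ and $(\pi'\circ\pi)\psi = \pi'\psi' = g$, as required.

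For condition $(3)$: given a square in $\M$ with left edge $\psi\in\L_\M$ and right edge $\eta\in\R_\M$, together with a dashed lift $u$ of the $(\pi'\circ\pi)$-image square in $\M''$, I would again descend in two stages. Apply $\pi'$ after $\pi$: the $\pi'\pi$-image square is $\pi'$ of the $\pi$-image square, and $u$ is a lift of it, so condition $(3)$ for $(\L_{\M'},\R_{\M'})$ relative to $(\L_{\M''},\R_{\M''})$ — applied to the $\pi$-image square in $\M'$, whose edges $\pi\psi\in\L_{\M'}$ and $\pi\eta\in\R_{\M'}$ — produces a dashed lift $v$ in $\M'$ with $\pi'v = u$. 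Then condition $(3)$ for $(\L_\M,\R_\M)$ relative to $(\L_{\M'},\R_{\M'})$, applied to the original square in $\M$ with the lift $v$ of its $\pi$-image, produces the desired dashed lift $\gam$ in $\M$ with $\pi\gam = v$, whence $(\pi'\circ\pi)\gam = \pi'v = u$. This completes the verification. There is no serious obstacle here: the lemma is a routine two-step bookkeeping argument, and the only point requiring a moment's care is the functoriality-on-arrow-categories observation used in condition $(1)$ (that $\pi$ and $\pi'$ carry retract diagrams to retract diagrams), which is immediate since any functor induces a functor on the corresponding arrow categories.
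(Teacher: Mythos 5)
Your proof is correct and follows essentially the same two-stage descent as the paper's own argument: for each of conditions (1)--(3) you first apply the $\pi'$-weak factorization system hypothesis at the top level, then the $\pi$-weak factorization system hypothesis at the bottom level. The only cosmetic difference is that you make explicit the (immediate) observation that a functor carries retract diagrams in the arrow category to retract diagrams, which the paper leaves implicit.
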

\begin{proof}
We will prove that $(\L_\M,\R_\M)$ satisfy conditions $(1)-(3)$ of Definition~\ref{d:rel-wfs}.
\begin{enumerate}
\item
Let $f$ be a morphism in $\N$ which is a retract of a morphism in $\L_\M$ such that $\pi'(\pi(f))$ is contained in $\L_{\M''}$. Since $\pi(f)$ is also a retract of a morphism in $\L_{\M'}$ we may conclude from property $(1)$ for $\pi'$ that $\pi(f)$ is in $\L_{\M'}$. Using property $(1)$ for $\pi$ it then follows that $f$ is in $\L_\M$ as desired. The proof for the case where $f$ is a retract of a morphism in $\R_{\M}$ is the same.
\item
Let $\vphi: X \lrar Y$ be a morphism in $\N$ and let $\pi'\pi\vphi = g \circ h$ be a factorization such that $h \in \L_{\M''}$ and $g \in \R_{\M''}$. By property $(2)$ for $\pi'$ there exists a factorization of $\pi\vphi$ as $\pi\vphi = \psi \circ \eta$ such that $\eta \in \L_{\M'},\psi \in \R_{\M'}$ and such that $\pi'\psi = g$ and $\pi'\eta = h$. Using property $(2)$ for $\pi$ we then deduce that there exists a factorization of $\vphi = G \circ H$ such that $H \in \L_{\M},G \in \R_{\M}$ and such that $\pi G = \psi$ and $\pi H = \eta$. It follows that $\pi'\pi G = g$ and $\pi'\pi H = h$.
\item
Let
$$ \xymatrix{
X \ar[r]\ar_{\psi}[d] & Z \ar^{\eta}[d] \\
Y \ar[r] & W \\
}$$
be a diagram in $\M$ such that $\psi \in \L_\M$ and $\eta \in \R_\M$ and equipped with a dashed lift
$$ \xymatrix{
\pi'\pi(X) \ar[r]\ar_{\pi'\pi\psi}[d] & \pi'\pi(Z) \ar^{\pi'\pi\eta}[d] \\
\pi'\pi(Y) \ar[r]\ar@{-->}^{u}[ur] & \pi'\pi(W) \\
}$$
By property $(3)$ for $\pi'$ there exists a dashed lift
$$ \xymatrix{
\pi(X) \ar[r]\ar_{\pi\psi}[d] & \pi(Z) \ar^{\pi\eta}[d] \\
\pi(Y) \ar[r]\ar@{-->}^{\gam}[ur] & \pi(W) \\
}$$
such that $\pi'\gam = u$. By property $(3)$ for $\pi$ there exists a dashed lift
$$ \xymatrix{
X \ar[r]\ar_{\psi}[d] & Z \ar^{\eta}[d] \\
Y \ar[r]\ar@{-->}^{U}[ur] & W \\
}$$
such that $\pi U = \gam$ and so $\pi'\pi U = u$.

\end{enumerate}
\end{proof}

We are now ready to define the relative analogue of the notion of a model category.
\begin{defn}\label{d:relative-model}
Let $\M,\N$ be two pre-model categories. We will say that a functor $\pi: \N \lrar \M$ exhibits $\N$ as a \textbf{model category relative to $\M$} if the following conditions are satisfied:
\begin{enumerate}
\item
$\pi$ is bicomplete.
\item
Let $f: X \lrar Y$ and $g: Y \lrar Z$ be morphisms in $\N$. If two of $f,g,g\circ f$ are in $\W_\N$ and if the image of the third is in $\W_\M$ then the third is in $\W_\N$.
\item
$(\Cof_\N\cap \W_\N,\Fib_\N)$ and $(\Cof_\N,\Fib_\N\cap \W_\N)$ are $\pi$-weak factorization systems relative to $(\Cof_\M \cap \W_\M,\Fib_\M)$ and $(\Cof_\M,\Fib_\M \cap \W_\M)$ respectively.
\end{enumerate}
In this case we will also say that $\pi$ is a \textbf{relative model category}.
\end{defn}

\begin{rem}\label{r:absolute}
The terminology of relative model category can be justified by the fact that a pre-model category $\M$ is a model category precisely when the terminal map $\M \lrar \ast$ is a relative model category.
\end{rem}

\begin{pro}\label{p:rel-model-comp}
Let $\pi: \M \lrar \M'$ and $\pi': \M' \lrar \M''$ be relative model categories. Then $\pi' \circ \pi: \M \lrar \M''$ is a relative model category.
\end{pro}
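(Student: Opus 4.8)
The plan is to verify directly that $\pi' \circ \pi$ satisfies the three conditions of Definition~\ref{d:relative-model}, using the corresponding properties of $\pi$ and $\pi'$ together with Lemma~\ref{l:wfs-comp}. First note that since $\pi$ and $\pi'$ are morphisms of pre-model categories in the sense that they preserve the relevant classes of morphisms (being relative model categories, they in particular preserve weak equivalences, (trivial) cofibrations and (trivial) fibrations), the composite $\pi' \circ \pi$ preserves all these classes, so the setup of Definition~\ref{d:relative-model} makes sense.

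For condition $(1)$, bicompleteness of $\pi' \circ \pi$ follows from the fact that the class of cocomplete functors (and dually, complete functors) is closed under composition, which was recorded in Remark~\ref{r:cocompose}; applying this to both $\pi$ and $\pi'$ gives that $\pi' \circ \pi$ is bicomplete. For condition $(2)$, the relative two-out-of-three property: suppose $f \colon X \lrar Y$ and $g\colon Y \lrar Z$ are morphisms in $\M$, two of $f, g, g\circ f$ lie in $\W_\M$, and the image of the third under $\pi' \circ \pi$ lies in $\W_{\M''}$. I would first apply relative two-out-of-three for $\pi'$: since $\pi$ preserves weak equivalences, two of $\pi f, \pi g, \pi(g\circ f)$ lie in $\W_{\M'}$, and the image of the third under $\pi'$ lies in $\W_{\M''}$, so the third lies in $\W_{\M'}$. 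Thus all three of $\pi f, \pi g, \pi(g \circ f)$ lie in $\W_{\M'}$. Now apply relative two-out-of-three for $\pi$: two of $f, g, g \circ f$ lie in $\W_\M$, and the image of the third under $\pi$ lies in $\W_{\M'}$, hence the third lies in $\W_\M$, as desired.

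For condition $(3)$, I would invoke Lemma~\ref{l:wfs-comp} twice. By hypothesis $(\Cof_\M \cap \W_\M, \Fib_\M)$ is a $\pi$-weak factorization system relative to $(\Cof_{\M'} \cap \W_{\M'}, \Fib_{\M'})$, and $(\Cof_{\M'} \cap \W_{\M'}, \Fib_{\M'})$ is a $\pi'$-weak factorization system relative to $(\Cof_{\M''} \cap \W_{\M''}, \Fib_{\M''})$; Lemma~\ref{l:wfs-comp} then yields that $(\Cof_\M \cap \W_\M, \Fib_\M)$ is a $(\pi' \circ \pi)$-weak factorization system relative to $(\Cof_{\M''} \cap \W_{\M''}, \Fib_{\M''})$. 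The same argument applied to the other pair gives that $(\Cof_\M, \Fib_\M \cap \W_\M)$ is a $(\pi'\circ\pi)$-weak factorization system relative to $(\Cof_{\M''}, \Fib_{\M''} \cap \W_{\M''})$. This establishes condition $(3)$ and completes the proof.

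I do not anticipate a genuine obstacle here: the statement is essentially bookkeeping, with all the real content already isolated in Remark~\ref{r:cocompose} and Lemma~\ref{l:wfs-comp}. The only point requiring a little care is condition $(2)$, where one must feed the conclusion of the two-out-of-three step for $\pi'$ back as a hypothesis for the two-out-of-three step for $\pi$; getting the logical order of the two applications right is the one place a careless argument could go astray.
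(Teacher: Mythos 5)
Your proposal is correct and follows essentially the same route as the paper's proof: condition $(1)$ via Remark~\ref{r:cocompose}, condition $(3)$ via Lemma~\ref{l:wfs-comp}, and condition $(2)$ by a direct check. The only difference is that the paper dismisses condition $(2)$ as ``straightforward to check,'' whereas you spell out the two-stage application of relative two-out-of-three (first for $\pi'$, then feeding the conclusion into $\pi$), which is a correct and welcome expansion of that step.
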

\begin{proof}
By Remark~\ref{r:cocompose} we see that $\pi' \circ \pi$ will satisfy $(1)$ of Definition~\ref{d:relative-model}. It is straightforward to check that it will satisfy $(2)$ as well. Finally, property $(3)$ for $\pi'\circ \pi$ follows from Lemma~\ref{l:wfs-comp}
\end{proof}

Combining Proposition~\ref{p:rel-model-comp} and Remark~\ref{r:absolute} we obtain the following corollary:
\begin{cor}
Let $\pi: \N \lrar \M$ be a relative model category such that $\M$ is a model category. Then $\N$ is a model category.
\end{cor}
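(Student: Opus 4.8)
The plan is to obtain this as an immediate consequence of Proposition~\ref{p:rel-model-comp} and Remark~\ref{r:absolute}, so that the argument is essentially a one-line composition once the pieces are lined up. First I would observe that the terminal category $\ast$ carries a unique (trivial) pre-model structure, and that by Remark~\ref{r:absolute} the hypothesis that $\M$ is a model category is precisely the assertion that the terminal functor $\M \lrar \ast$ is a relative model category. By hypothesis $\pi \colon \N \lrar \M$ is also a relative model category, and in particular $\N$ is a pre-model category.

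Next I would apply Proposition~\ref{p:rel-model-comp} to the composable pair $\N \overset{\pi}{\lrar} \M \lrar \ast$, concluding that the composite $\N \lrar \ast$ is a relative model category. Finally, invoking Remark~\ref{r:absolute} in the opposite direction — a pre-model category is a model category as soon as its terminal map is a relative model category — we conclude that $\N$ is a model category, as desired.

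I do not expect any genuine obstacle here: all of the substance is already packaged in Proposition~\ref{p:rel-model-comp}, whose three conditions come respectively from Remark~\ref{r:cocompose} (bicompleteness is closed under composition), a direct check (the relative two-out-of-three), and Lemma~\ref{l:wfs-comp} (composition of relative weak factorization systems). The only point meriting a second glance is that Remark~\ref{r:absolute} is being used as a biconditional, so one should check that unwinding Definition~\ref{d:relative-model} with target $\ast$ really does recover Definition~\ref{d:model-cat}: conditions $(1)$ and $(2)$ become MC1 and MC2 verbatim, while the two $\pi$-weak factorization systems of condition $(3)$ specialise — via Definition~\ref{d:rel-wfs}, in which every constraint involving $\pi(f)$ over $\ast$ is vacuous — to the two ordinary weak factorization systems $(\Cof_\N \cap \W_\N, \Fib_\N)$ and $(\Cof_\N, \Fib_\N \cap \W_\N)$, encoding MC4, MC5 and the retract-closure parts of MC3 for $\Cof$ and $\Fib$; closure of $\W_\N$ under retracts then follows formally from the relative two-out-of-three applied to a retract diagram, exactly as in the proof of Theorem~\ref{model structure}.
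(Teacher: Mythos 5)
Your proposal is correct and follows exactly the paper's intended argument: combine Proposition~\ref{p:rel-model-comp} (composition of relative model categories) with Remark~\ref{r:absolute} (a pre-model category is a model category iff its terminal map is a relative model category), applied to the composite $\N \to \M \to \ast$. The paper's own proof is a one-liner along precisely these lines; your extra paragraph unwinding why Remark~\ref{r:absolute} is genuinely a biconditional is a reasonable, if optional, sanity check.
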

\begin{proof}
A pre-model category $\M$ is a model category if and only if $\M\lrar \{*\}$ is a model category.
\end{proof}

Now let $\pi:\N \lrar \M$ be a relative model category. Since $\pi$ is bicomplete we deduce that for each $A \in \M$ the fiber $\N \times_\M \{A\}$ is bicomplete. We will denote by $\emptyset_A, \ast_A \in \N \times_\M \{A\}$ the initial and terminal objects of $\N \times_\M \{A\}$, respectively. We will say that an object $X \in \N$ is $\pi$-cofibrant if the unique map $\emptyset_{\pi(X)} \lrar X$ covering $\Id_{\pi(X)}$ is in $\Cof_\N$. Similarly, we will say that an object $X \in \N$ is $\pi$-fibrant if the unique map $X \lrar \ast_{\pi(X)}$ covering $\Id_{\pi(X)}$ is in $\Fib_\N$.

\begin{defn}\label{d:model-fib}
Let $\pi: \N \lrar \M$ be a relative model category. We will say that $\pi$ is a \textbf{model fibration} if the following conditions are satisfied:
\begin{enumerate}
\item
The underlying functor of $\pi$ is a biCartesian fibration.
\item
If $f: X \lrar Y$ is a $\pi$-coCartesian morphism in $\N$ such that $X$ is $\pi$-cofibrant and $\pi(f) \in \W_\M$ then $f \in \W_\N$.
\item
If $f: X \lrar Y$ is a $\pi$-Cartesian morphism in $\N$ such that $Y$ is $\pi$-fibrant and $\pi(f)$ is in $\W_\M$ then $f \in \W_\N$.
\end{enumerate}
A \textbf{morphism} of model fibrations over $\M$ is a Quillen adjunction
$$\xymatrix{
\N\ar[dr]_{\pi} \ar[rr]<1ex>^{\Phi} && \N'\ar[dl]^{\pi'}\ar^{\Psi}[ll]<1ex>_{\upvdash}\\& \M &}$$
over $\M$ such that $\Phi$ preserves coCartesian morphisms and $\Psi$ preserves Cartesian morphisms. The $2$-morphisms are given by psuedo-transformations of adjunctions. We will denote the resulting $(2,1)$-category by $\ModFib(\M)$.
\end{defn}

\begin{thm}\label{t:mod-fib}
Let $\M$ be a pre-model category and $\F: \M \lrar \ModCat$ be a proper relative functor. Then
$$ \pi:\int_\M \F \lrar \M $$
is a model fibration.
\end{thm}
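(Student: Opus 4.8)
The strategy is to verify the three defining conditions of a model fibration (Definition~\ref{d:model-fib}) for $\pi:\int_\M\F\lrar\M$, reusing as much as possible the arguments already assembled in the proof of Theorem~\ref{model structure}. The first observation is that most of those arguments never used that $\M$ itself satisfied the full model category axioms -- they used only that $\M$ carries the three classes $\W_\M,\Cof_\M,\Fib_\M$ together with the compatibility between liftings and factorizations \emph{in $\M$}. So the bulk of the work is bookkeeping: re-reading the proof of Theorem~\ref{model structure} and isolating exactly which facts about $\M$ were invoked, then checking that each such invocation is precisely of the relative form demanded by Definition~\ref{d:relative-model}.

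Concretely, I would proceed as follows. First, condition (1) of Definition~\ref{d:model-fib}: that the underlying functor of $\pi$ is a biCartesian fibration. This is essentially already recorded in \S\ref{ss:Groth} (the Grothendieck construction of a functor into $\AdjCat$, equivalently $\ModCat$, is a biCartesian fibration, by Proposition~\ref{p:bifib}); one just notes that a proper relative functor into $\ModCat$ in particular lands in categories and adjunctions. Second, condition (1) of Definition~\ref{d:relative-model} (that $\pi$ is bicomplete): this is exactly Proposition~\ref{p:cocomplete} applied to $\F$ viewed as a diagram of left Quillen functors, together with its dual applied to $\M^{\op}$ -- the same two sentences used in step (1) of the proof of Theorem~\ref{model structure}, except that now we stop \emph{before} invoking bicompleteness of $\M$. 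Third, condition (2) of Definition~\ref{d:relative-model} (relative two-out-of-three): this is the content of step (2) of the proof of Theorem~\ref{model structure}; I would re-examine that argument and point out that the only place cocompleteness/two-out-of-three \emph{in $\M$} entered was to pass between ``$g$ is a weak equivalence in $\M$'' and ``$g\circ f$ is a weak equivalence in $\M$'' -- which is now supplied as a hypothesis, since we are told the image in $\M$ of the third map lies in $\W_\M$.

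Fourth, and this is the heart of the matter, condition (3) of Definition~\ref{d:relative-model}: that $(\Cof\cap\W,\Fib)$ and $(\Cof,\Fib\cap\W)$ are $\pi$-weak factorization systems relative to the corresponding pairs on $\M$. The retract axiom (1) of Definition~\ref{d:rel-wfs} is extracted from step (5) of the proof of Theorem~\ref{model structure}: the retract arguments there, applied to diagrams~\ref{e:retract-2}, only used retract-closure of (trivial) cofibrations/fibrations \emph{in the fibers} $\F(A)$, which are genuine model categories, plus retract-closure in $\M$ -- and the latter is exactly what the relative formulation now hands us as the hypothesis ``$\pi(f)\in\L_\M$''. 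The factorization axiom (2) is step (4) of the proof of Theorem~\ref{model structure}, read relatively: given $\vphi:(A,X)\lrar(B,Y)$ and a factorization $\pi\vphi = g\circ h = (f''\circ f')$ of $f=\pi\vphi$ in $\M$ of the prescribed type, we run \emph{the same construction} -- form $\psi:f'_!X\lrar(f'')^*Y$ in $\F(C)$ (where $C$ is the intermediate object, i.e.\ the one $g,h$ factor through), factor it in the model category $\F(C)$, take adjoints -- and Lemma~\ref{characterization} identifies the two pieces as living in $(\Cof\cap\W,\Fib)$ resp.\ $(\Cof,\Fib\cap\W)$ over the given $h,g$. The lifting axiom (3) is step (3) of the proof of Theorem~\ref{model structure}: given a lifting square in $\int_\M\F$ with left leg in $\Cof$ (or $\Cof\cap\W$), right leg in $\Fib$ (or $\Fib\cap\W$), and a chosen lift $f$ of the image square \emph{in $\M$}, one reduces -- exactly as in diagram~\ref{e:square2} -- to a lifting problem in the single model category $\F(B)$, where a lift exists unconditionally; the only use of $\M$ was to obtain the lift $f$, which is now given. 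Note that here Lemma~\ref{characterization} is used with $\M$ a pre-model category; its proof only used that the fibers are model categories and that $f_!$, $f^*$ preserve weak equivalences for $f$ a trivial cofibration resp.\ trivial fibration in $\M$, i.e.\ properness of $\F$ -- so it goes through verbatim.

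Finally, conditions (2) and (3) of Definition~\ref{d:model-fib} (compatibility of $\pi$-(co)Cartesian morphisms with weak equivalences). A $\pi$-coCartesian morphism out of $(A,X)$ covering $f:A\lrar B$ is, up to isomorphism, $(f,\Id):(A,X)\lrar(B,f_!X)$; it is $\pi$-cofibrant exactly when $\emptyset_A\lrar X$ is a cofibration in $\F(A)$, i.e.\ $X$ is cofibrant. If moreover $f\in\W_\M$, then by Definition~\ref{d:model}(1) the morphism $(f,\Id)$ is a weak equivalence iff $f_!(X^{\cof})\lrar f_!X\lrar f_!X$ is a weak equivalence in $\F(B)$; since $X$ is already cofibrant this composite is $f_!$ of a weak equivalence between cofibrant objects, hence -- because $f_!\dashv f^*$ is a Quillen equivalence ($\F$ relative!), in particular $f_!$ is left Quillen and preserves weak equivalences between cofibrant objects -- a weak equivalence. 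This gives (2); condition (3) follows by the dual argument using Observation~\ref{o:sym}, a $\pi$-Cartesian morphism $(f,\Id^{\ad}\text{-type})$ into a $\pi$-fibrant $(B,Y)$, and $f^*$ preserving weak equivalences between fibrant objects.

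I expect the main obstacle -- really the only non-mechanical part -- to be the careful re-reading of the proof of Theorem~\ref{model structure} needed to confirm that \emph{every} appeal to the model structure on $\M$ is of exactly the shape that the relative definitions (Definitions~\ref{d:rel-wfs}, \ref{d:relative-model}) provide, with no hidden use of, say, functorial factorizations in $\M$ beyond what the lifting/retract/factorization axioms give. A secondary subtlety is making sure Lemma~\ref{characterization} and Observation~\ref{o:sym} survive the weakening of $\M$ to a pre-model category; as noted, their proofs only use properness of $\F$ and the model structures on the fibers, so this is fine, but it should be stated explicitly.
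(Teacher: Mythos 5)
Your proposal is correct and follows the same route as the paper: the paper's proof of Theorem~\ref{t:mod-fib} is a two-sentence gloss (``the fact that $\pi$ is a relative model category follows by examining the proof of Theorem~\ref{model structure}\dots $\pi$ is clearly a biCartesian fibration and properties (2) and (3) of Definition~\ref{d:model-fib} are a direct consequence of the definition of weak equivalences''), and your write-up simply supplies, correctly, all the bookkeeping that ``examining'' was meant to encompass. In particular your identification of exactly which clauses of the proof of Theorem~\ref{model structure} map to which clauses of Definitions~\ref{d:rel-wfs} and~\ref{d:relative-model}, your check that Lemma~\ref{characterization} and Observation~\ref{o:sym} only use properness/relativity of $\F$ and the fibrewise model structures, and your direct verification of conditions (2) and (3) of Definition~\ref{d:model-fib} via Ken Brown's lemma for $f_!$ on cofibrant objects (dually $f^*$ on fibrant objects) are all exactly what the paper is silently relying on.
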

\begin{proof}
The fact that $\pi$ is a relative model category follows by examining the proof of Theorem~\ref{model structure}. Now $\pi$ is clearly a biCartesian fibration and properties (2) and (3) of Definition~\ref{d:model-fib} are a direct consequence of the definition of weak equivalences in the integral model structure.
\end{proof}

Let $\M$ be a pre-model category. Observation~\ref{o:functoriality} extends to the case of pre-model categories verbatim. In particular, the association $\F \mapsto \int_\M\F$ determines a functor of $(2,1)$-categories
\begin{equation}\label{e:groth}
\int_\M : \Fun_{\PR}(\M,\ModCat) \lrar  \ModFib(\M)
\end{equation}

Our purpose in this section is to prove the following theorem, which is a model categorical analogue of Grothendieck's classical correspondence:
\begin{thm}\label{t:equiv}
Let $\M$ be a pre-model category. The functor $\int_\M$ above is an equivalence of $(2,1)$-categories.
\end{thm}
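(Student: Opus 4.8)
The plan is to construct an explicit inverse to the Grothendieck functor $\int_\M$ by ``straightening'' a model fibration $\pi:\N\lrar\M$ into a proper relative functor $\St(\pi):\M\lrar\ModCat$, and to check that the two constructions are mutually inverse up to coherent equivalence, invoking Proposition~\ref{p:bifib} to handle the underlying $2$-categorical bookkeeping. First I would observe that since a model fibration is in particular a biCartesian fibration, Proposition~\ref{p:bifib} already gives an equivalence of $(2,1)$-categories $\Fun(\M,\AdjCat)\simeq\BiFib(\M)$ on the level of underlying categories-with-adjunctions. So the content of the theorem is twofold: (a) when the biCartesian fibration $\pi:\N\lrar\M$ is moreover a model fibration, each fiber $\N_A := \N\times_\M\{A\}$ carries a canonical model structure, making the straightening land in $\ModCat$ rather than merely $\AdjCat$; and (b) the resulting functor $\M\lrar\ModCat$ is proper and relative, and conversely every proper relative functor arises this way. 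Once these are in place, fully-faithfulness of $\int_\M$ follows from the fully-faithfulness of the forgetful functors $\ModCat\lrar\AdjCat$ and $\ModFib(\M)\lrar\BiFib(\M)$ (both are inclusions of $(2,1)$-subcategories that only restrict the objects, not the hom-categories), combined with Proposition~\ref{p:bifib}.

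For step (a), given a model fibration $\pi:\N\lrar\M$ and an object $A\in\M$, I would equip $\N_A$ with the classes $\W_{\N_A} := \W_\N\cap\N_A$, $\Cof_{\N_A} := \Cof_\N\cap\N_A$, $\Fib_{\N_A} := \Fib_\N\cap\N_A$. Bicompleteness of $\N_A$ is immediate from condition (1) of Definition~\ref{d:relative-model} (applied to the constant diagram over the discrete category on one object, or rather to $\I=\{A\}\hookrightarrow\M$). Two-out-of-three inside $\N_A$ follows from condition (2) of Definition~\ref{d:relative-model}, since a morphism in $\N_A$ has image $\Id_A\in\W_\M$. The weak factorization system axioms (factorization, lifting, retracts) for $\N_A$ follow from condition (3) of Definition~\ref{d:relative-model}: a $\pi$-weak factorization system relative to $(\Cof_\M\cap\W_\M,\Fib_\M)$, when restricted to the fiber over $A$ where $\pi$ becomes the terminal map $\N_A\lrar\{A\}$, becomes (by Remark~\ref{r:absolute}) an honest weak factorization system --- note that for morphisms of $\N_A$ the side conditions ``$\pi(f)\in\Cof_\M\cap\W_\M$'' etc. are automatic since $\pi(f)=\Id_A$. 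The biCartesian structure then supplies the adjunctions $f_!\dashv f^*$ for $f:A\lrar B$, and one checks these are Quillen adjunctions: $f_!$ preserves cofibrations and trivial cofibrations by taking $\pi$-coCartesian lifts and using that $\pi$-coCartesian lifts of (trivial) cofibrations can be chosen to be (trivial) cofibrations --- this is where condition (3) of Definition~\ref{d:rel-wfs} combined with a lifting argument, or more directly the factorization property, is used. For step (b): relativeness is exactly conditions (2) and (3) of Definition~\ref{d:model-fib}, which say precisely that $\pi$-coCartesian lifts of weak equivalences in $\M$ over cofibrant objects, and $\pi$-Cartesian lifts over fibrant objects, are weak equivalences --- unwinding, this says $f_!\dashv f^*$ is a Quillen equivalence for $f\in\W_\M$ (using that a Quillen adjunction is a Quillen equivalence iff the derived unit/counit is an equivalence, checked on cofibrant/fibrant objects). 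Properness is similar but more delicate: for $f:A\lrar B$ a trivial cofibration, left properness demands $f_!$ preserve \emph{all} weak equivalences, which one extracts by a lifting/factorization manipulation from the relative weak factorization system axioms; this is where I expect the most care is needed.

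Finally I would check that the two round-trips are the identity up to coherent equivalence. Starting from $\F:\M\lrar\ModCat$ proper relative, forming $\pi:\int_\M\F\lrar\M$ (which is a model fibration by Theorem~\ref{t:mod-fib}), and straightening back: the fiber $(\int_\M\F)_A$ is canonically isomorphic to $\F(A)$ as a category, the biCartesian structure recovers $f_!\dashv f^*$ by the explicit description of (co)Cartesian lifts in \S\ref{ss:Groth}, and the model structure on the fiber recovered in step (a) agrees with that of $\F(A)$ because the integral (trivial) cofibrations and fibrations restrict on the fiber to exactly $\Cof_{\F(A)}$, $\Fib_{\F(A)}$, $\W_{\F(A)}$ by Definition~\ref{d:model} and Lemma~\ref{characterization}. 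Conversely, starting from a model fibration $\pi:\N\lrar\M$, straightening to $\St(\pi)$ and re-integrating recovers $\N$ as a biCartesian fibration by Proposition~\ref{p:bifib}, and one checks the model fibration structure is preserved: the integral model structure on $\int_\M\St(\pi)$ has the same (trivial) cofibrations and fibrations as $\N$ because these are detected fiberwise plus a condition on the base, by Definition~\ref{d:model}, and the fiberwise data plus base data match by construction. The main obstacle, as flagged above, is establishing properness of $\St(\pi)$ from the relative weak factorization system axioms of Definition~\ref{d:rel-wfs} --- i.e.\ showing that $\pi$-(co)Cartesian transport along trivial (co)fibrations of $\M$ preserves arbitrary weak equivalences, not merely the ones forced by two-out-of-three; this requires combining the lifting property (3) of Definition~\ref{d:rel-wfs} with a functorial-factorization argument inside the total category, analogous to --- but relative over $\M$ --- the classical proof that in a model category trivial cofibrations are closed under cobase change and that left Quillen functors between specific model categories preserve weak equivalences when the relevant properness holds.
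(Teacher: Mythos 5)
Your overall strategy matches the paper's: reduce to the $2$-categorical Proposition~\ref{p:bifib} via the faithful forgetful functors, equip the fibers with the restricted classes, show the transition adjunctions are Quillen, verify properness and relativeness, and check the round-trips. However, two points need attention.

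First, your claim that $\ModCat\lrar\AdjCat$ and $\ModFib(\M)\lrar\BiFib(\M)$ are ``inclusions of $(2,1)$-subcategories that only restrict the objects, not the hom-categories'' is false. These forgetful functors are \emph{faithful} but not \emph{full}: a pseudo-natural transformation of adjunctions need not be a Quillen transformation, and an adjunction over $\M$ preserving (co)Cartesian morphisms need not be a Quillen adjunction. Consequently, full-faithfulness of $\int_\M$ does not follow formally from Proposition~\ref{p:bifib}. You must additionally prove that if $\F,\G$ are proper relative and $(\sig,\tau):U(\F)\Rightarrow U(\G)$ is a pseudo-natural transformation in $\Fun(\M,\AdjCat)$ whose integration $\sig_*\dashv\tau_*$ is a Quillen adjunction, then each component $\sig_A\dashv\tau_A$ is already a Quillen adjunction (hence $(\sig,\tau)$ is a Quillen transformation). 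This is the paper's claim~(2), established by identifying $\F(A)$, $\G(A)$ with the fibers $\int_\M\F\times_\M\{A\}$, $\int_\M\G\times_\M\{A\}$ carrying their restricted model structures, and observing $\sig_A\dashv\tau_A$ is the restriction of $\sig_*\dashv\tau_*$ to these fibers.

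Second, your treatment of properness of the straightening is under-specified. You correctly flag it as the subtle point but frame it as a ``functorial-factorization argument''. The paper's mechanism is cleaner: one first shows (Lemma~\ref{l:car-lem1}) via a direct lifting argument that $\pi$-coCartesian lifts of (trivial) cofibrations of $\M$ are (trivial) cofibrations of $\N$, and likewise for Cartesian lifts of (trivial) fibrations. Then, given a trivial cofibration $f:A\lrar B$ in $\M$ and a weak equivalence $\vphi:X\lrar X'$ in the fiber over $A$, form the commuting square whose horizontals are $\pi$-coCartesian lifts of $f$; these horizontals are trivial cofibrations hence weak equivalences, so the relative two-out-of-three property (condition~(2) of Definition~\ref{d:relative-model}) gives that $(\Id,f_!\vphi)$, and hence $f_!\vphi$, is a weak equivalence. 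This is exactly Corollary~\ref{c:necessary}. No functorial factorization is needed here, only the lifting property that identifies coCartesian lifts of trivial cofibrations as trivial cofibrations plus relative two-out-of-three.
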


In order to prove Theorem~\ref{t:equiv} we will need several lemmas.

\begin{lem}\label{l:car-lem1}
Let $\M,\N$ be pre-model categories and let $\pi: \N \lrar \M$ be a biCartesian fibration.
\begin{enumerate}
\item
If $\pi$ is right Quillen and $\vphi: X \lrar Y$ is a $\pi$-coCartesian morphism in $\N$ such that $\pi\vphi$ is a (trivial) cofibration in $\M$ then $\vphi$ is a (trivial) cofibration in $\N$.
\item
If $\pi$ is left Quillen and $\vphi: X \lrar Y$ is a $\pi$-Cartesian morphism in $\N$ such that $\pi\vphi$ is a (trivial) fibration in $\M$ then $\vphi$ is a (trivial) fibration in $\N$.
\end{enumerate}
\end{lem}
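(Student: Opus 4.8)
\textbf{Proof plan for Lemma~\ref{l:car-lem1}.}
The plan is to prove part $(1)$ directly and then obtain part $(2)$ by a formal duality (pass to the opposite categories, where coCartesian becomes Cartesian, right Quillen becomes left Quillen, and cofibrations become fibrations). So I focus on part $(1)$. Let $\vphi: X \lrar Y$ be $\pi$-coCartesian with $\pi\vphi$ a (trivial) cofibration in $\M$. To show $\vphi$ is a (trivial) cofibration in $\N$, I would show it has the left lifting property against every (trivial) fibration $\eta: Z \lrar W$ in $\N$ --- but of course in a pre-model category there is no characterization of cofibrations by lifting, so instead I will show $\vphi$ is a retract of a morphism that is manifestly a (trivial) cofibration, using the factorization axioms. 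Concretely, factor $\vphi$ in $\N$ as $\vphi = \psi \circ \iota$ with $\iota \in \Cof_\N$ (resp. $\Cof_\N \cap \W_\N$) and $\psi \in \Fib_\N \cap \W_\N$ (resp. $\Fib_\N$); since $\pi$ is right Quillen, $\pi\psi$ is a trivial fibration (resp. fibration) and $\pi\iota$ is a cofibration (resp. trivial cofibration) in $\M$.

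The key step is then a retract argument powered by the universal property of the $\pi$-coCartesian morphism $\vphi$. Write the intermediate object of the factorization as $T$, so $\iota: X \lrar T$ and $\psi: T \lrar Y$. Because $\vphi$ is $\pi$-coCartesian and $\pi\psi \colon \pi(T) \lrar \pi(Y)$ is a (trivial) fibration in $\M$ lying over which we have the cofibration $\pi\vphi$, the map $\pi\vphi$ lifts against $\pi\psi$: indeed $\pi\psi$ is a trivial fibration (resp. $\pi\iota$ is a trivial cofibration) so one of the two standard lifting situations of $\M$ applies and produces $s: \pi(Y) \lrar \pi(T)$ with $s \circ \pi\vphi = \pi\iota$ and $\pi\psi \circ s = \Id_{\pi(Y)}$. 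Now I use the coCartesian property of $\vphi$: the pair $(\iota: X \lrar T,\ s: \pi(Y) \lrar \pi(T))$ satisfies $\pi(\iota) = s \circ \pi(\vphi)$, so there is a unique $\sigma: Y \lrar T$ with $\sigma \circ \vphi = \iota$ and $\pi(\sigma) = s$. This $\sigma$ is a section of $\psi$ up to the compatibility over $\M$: $\psi \circ \sigma$ and $\Id_Y$ both have composite with $\vphi$ equal to $\vphi$ (the former because $\psi\sigma\vphi = \psi\iota = \vphi$) and both project to $\Id_{\pi(Y)}$ under $\pi$, so by the uniqueness clause of the coCartesian property (Remark~\ref{r:unique}) we get $\psi \circ \sigma = \Id_Y$. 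Hence the diagram
$$\xymatrix{
X \ar^{\vphi}[d]\ar@{=}[r] & X \ar^{\iota}[d]\ar@{=}[r] & X \ar^{\vphi}[d] \\
Y \ar^{\sigma}[r] & T \ar^{\psi}[r] & Y \\
}$$
exhibits $\vphi$ as a retract of $\iota$ in the arrow category $\N^{\Del^1}$, and the bottom row projects under $\pi$ to a retract diagram in $\M$ whose middle term $\pi\iota$ is a (resp. trivial) cofibration, with composite $\pi\vphi$ a (trivial) cofibration. Since $\iota \in \Cof_\N$ (resp. $\Cof_\N \cap \W_\N$) and $\pi\vphi \in \Cof_\M$ (resp. $\Cof_\M \cap \W_\M$), the relative retract-closure axiom (Definition~\ref{d:rel-wfs}(1), applicable since $\pi$ underlies a relative model category, or simply built into the pre-model structure of $\int_\M\F$) gives $\vphi \in \Cof_\N$ (resp. $\Cof_\N \cap \W_\N$) as desired.

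The main obstacle I anticipate is bookkeeping the two parallel cases (cofibration versus trivial cofibration) so that the correct one of the two lifting axioms of $\M$ is invoked, and making sure the retract-closure statement I appeal to is exactly the one available: in the pre-model category $\int_\M\F$, the class $\Cof$ (and $\Cof\cap\W$) was shown closed under retracts in the proof of Theorem~\ref{model structure}, but in the abstract statement of the lemma only ``$\pi$ is a biCartesian fibration'' between pre-model categories is assumed, so the retract-closure must come from the pre-model structure on $\N$ itself, which by definition of pre-model category only requires $\W_\N,\Cof_\N,\Fib_\N$ to be subcategories --- hence I should state the lemma as applied in the situations where $\N$ is (the underlying pre-model category of) a relative model category or an integral construction, where retract-closure holds. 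Modulo this, everything reduces to one lift in $\M$ plus two applications of the coCartesian universal property (existence and uniqueness), which is routine.
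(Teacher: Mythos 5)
Your proof is correct in substance but takes a genuinely different route from the paper. The paper proves directly that $\vphi$ has the left lifting property against trivial fibrations (resp.\ fibrations): given any test square against a trivial fibration $\rho$, it projects the square to $\M$, produces a lift $u$ there using the lifting axiom, and then lifts $u$ back through the $\pi$-coCartesian morphism $\vphi$, using uniqueness (Remark~\ref{r:unique}) to check the lower triangle commutes. You instead make a single such construction: factor $\vphi$ as $\psi\circ\iota$ with $\iota\in\Cof_\N$ (resp.\ $\Cof_\N\cap\W_\N$) and $\psi\in\Fib_\N\cap\W_\N$ (resp.\ $\Fib_\N$), lift $\pi\vphi$ against $\pi\psi$ in $\M$ to get a section $s$ of $\pi\psi$, lift $s$ through the coCartesian $\vphi$ to get $\sigma$ with $\psi\sigma=\Id_Y$ (again via the uniqueness clause), and conclude that $\vphi$ is a retract of $\iota$. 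Both arguments ultimately rest on the same move --- lift in $\M$, then lift again through the coCartesian edge, then use uniqueness to close a triangle --- but your version trades retract-closure plus the factorization axiom in $\N$ for the paper's LLP characterization, and it runs the move once rather than once per test square. The paper's argument is slightly leaner in its demands on $\N$ (it needs only the lifting characterization of $\Cof_\N$, not factorizations), which matters if one cares about precisely which relative-model-category axioms are in force. Two small remarks: your assertion that ``$\pi\iota$ is a cofibration (resp.\ trivial cofibration) in $\M$'' does not follow --- $\pi$ right Quillen preserves fibrations and trivial fibrations, not cofibrations --- but you never use it, so the slip is harmless. Also, you correctly flag that the pre-model hypotheses of the lemma are literally too weak to supply retract-closure in $\N$, but you should flag the parallel issue that the pre-model structure on $\M$ does not literally supply the lifting axiom either; the paper's own proof silently makes the same assumption, so this is a shared shortcoming of the stated hypotheses rather than a defect of your argument. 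Deducing part~(2) from part~(1) by passing to opposite categories is fine and matches the paper's ``completely analogous'' duality.
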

\begin{proof}
We shall prove assertion (1) above. The proof of assertion (2) is completely analogous. Assume that $\pi\vphi$ is a cofibration. We need to show that $\vphi$ has the left lifting property with respect to trivial fibration.

Let
\begin{equation}\label{e:square3}
\xymatrix{
X \ar_{\vphi}[d]\ar^{\psi}[r] & X' \ar^{\rho}[d] \\
Y \ar_{\eta}[r] & Y' \\
}
\end{equation}
be a commutative diagram in $\N$ such that $\rho$ is a trivial fibration. Since $\pi$ is right Quillen the map $\pi\rho$ is a trivial fibration. Hence the projected square
$$\xymatrix{
\pi(X) \ar_{\pi\vphi}[d]\ar^{\pi\psi}[r] & \pi(X') \ar^{\pi \rho}[d] \\
\pi(Y) \ar_{\pi\eta}[r]\ar@{-->}^{u}[ur] & \pi(Y') \\
}$$
admits a dashed lift $u$. Since $\vphi$ is $\pi$-coCartesian there exists a dashed lift
$$\xymatrix{
X \ar_{\vphi}[d]\ar^{\psi}[r] & X' \ar^{\rho}[d] \\
Y \ar_{\eta}[r]\ar@{-->}^{\gam}[ur] & Y' \\
}$$
such that $\pi\gam = u$ and $\gam \circ \vphi =\psi$. From Remark~\ref{r:unique} we get that $\rho \circ \gam = \eta$ as well and so $\gam$ is a lift in the square~\ref{e:square3}. The case where $\pi\vphi$ is a trivial cofibration can be proven using the same argument by taking $\rho$ to be an arbitrary fibration.
\end{proof}

\begin{cor}
If $\pi: \N \lrar \M$ is a model fibration then $\pi$ is both a left and a right Quillen functor (of pre-model categories). Its left adjoint is the functor $A \mapsto \emptyset_A$ and its right adjoint is the functor $A \mapsto \ast_A$.
\end{cor}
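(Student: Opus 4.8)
The plan is to identify the two adjoints of $\pi$ explicitly and then to read the two Quillen conditions off the relative model structure together with Lemma~\ref{l:car-lem1}.

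First I would produce the adjunctions from the biCartesian fibration structure. Fix $A \in \M$. For $X \in \N$, a morphism $g : \emptyset_A \lrar X$ in $\N$ is the same datum as the pair consisting of $\pi(g) : A \lrar \pi(X)$ together with the factorization of $g$ through the $\pi$-coCartesian lift of $\pi(g)$ starting at $\emptyset_A$; since the pushforward functor $\pi(g)_{!}$ between fibers is a left adjoint it preserves the initial object, so this lift has target $\emptyset_{\pi(X)}$ and the remaining factor is the unique fibrewise morphism $\emptyset_{\pi(X)} \lrar X$. This yields a natural bijection $\Hom_\N(\emptyset_A,X) \cong \Hom_\M(A,\pi(X))$, exhibiting $A \mapsto \emptyset_A$ as left adjoint to $\pi$, with identity unit. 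Dualising --- using that $\pi$-Cartesian pullback preserves the terminal object of a fiber --- gives $\Hom_\N(X,\ast_A) \cong \Hom_\M(\pi(X),A)$, so $A \mapsto \ast_A$ is right adjoint to $\pi$. The same analysis shows that for $f : A \lrar B$ in $\M$ the induced morphism $\emptyset_A \lrar \emptyset_B$ is precisely the $\pi$-coCartesian lift of $f$, and $\ast_A \lrar \ast_B$ is precisely the $\pi$-Cartesian lift of $f$.

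Next, since a model fibration is in particular a relative model category, the two $\pi$-weak factorization systems of Definition~\ref{d:relative-model} are, by Definition~\ref{d:rel-wfs}, required to have $\pi$ carry $\Cof_\N$, $\Cof_\N \cap \W_\N$, $\Fib_\N$ and $\Fib_\N \cap \W_\N$ into $\Cof_\M$, $\Cof_\M \cap \W_\M$, $\Fib_\M$ and $\Fib_\M \cap \W_\M$ respectively. Hence $\pi$ preserves cofibrations, trivial cofibrations, fibrations and trivial fibrations.

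Finally I would invoke Lemma~\ref{l:car-lem1}. Because $\pi$ preserves fibrations and trivial fibrations, part (1) applies to the $\pi$-coCartesian morphism $\emptyset_A \lrar \emptyset_B$ lying over a (trivial) cofibration $f : A \lrar B$ and shows it is a (trivial) cofibration; thus $A \mapsto \emptyset_A$ preserves cofibrations and trivial cofibrations. Dually, because $\pi$ preserves cofibrations and trivial cofibrations, part (2) applied to the $\pi$-Cartesian morphism $\ast_A \lrar \ast_B$ over a (trivial) fibration shows it is a (trivial) fibration, so $A \mapsto \ast_A$ preserves fibrations and trivial fibrations. Combining: $(A\mapsto\emptyset_A) \dashv \pi$ is a Quillen adjunction of pre-model categories, whence $\pi$ is a right Quillen functor; and $\pi \dashv (A\mapsto\ast_A)$ is a Quillen adjunction of pre-model categories, whence $\pi$ is a left Quillen functor. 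The only step carrying real content is the first one --- pinning down the adjoints and recognising $\emptyset_A\to\emptyset_B$ and $\ast_A\to\ast_B$ as the relevant (co)Cartesian lifts; I expect this to be the main, though routine, obstacle, after which everything reduces to the relative model structure axioms and Lemma~\ref{l:car-lem1}.
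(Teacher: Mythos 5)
Your proof is correct and is essentially the argument the paper has in mind: the authors give no explicit proof, but the placement of Lemma~\ref{l:car-lem1} immediately before the corollary makes it clear they intend precisely the three steps you carry out — (i) identify $A\mapsto\emptyset_A$ and $A\mapsto\ast_A$ as the adjoints by observing that the pushforwards $f_!$ preserve initial objects and the pullbacks $f^*$ preserve terminal objects, so that $\emptyset_A\to\emptyset_B$ and $\ast_A\to\ast_B$ are (co)Cartesian lifts; (ii) extract from Definition~\ref{d:rel-wfs} that $\pi$ preserves all four distinguished classes; (iii) feed the (co)Cartesian lifts into Lemma~\ref{l:car-lem1}. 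You are also careful to note that what Lemma~\ref{l:car-lem1} actually needs from the ``right Quillen'' hypothesis is just preservation of (trivial) fibrations, which is exactly what step (ii) delivers, so no circularity arises. No gaps.
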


\begin{cor}\label{c:necessary}
Let $\M$ be a pre-model category and let $\F: \M \lrar \ModCat$ be a functor. Assume that there exists a model structure on $\int_\M\F$ such that
\begin{enumerate}
\item
The map $\pi:\int_\M\F \lrar \M$ is a right (left) Quillen functor.
\item
For every $A \in \M$, a morphism $\vphi: X \lrar X'$ is a weak equivalence in $\F(A)$ if and only if $(\Id,\vphi):(A,X) \lrar (A,X')$ is a weak equivalence in $\int_\M\F$.
\end{enumerate}
Then $\F$ is left (right) proper.
\end{cor}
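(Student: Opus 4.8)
The plan is to obtain the corollary as a short diagram chase combining Lemma~\ref{l:car-lem1} with the two-out-of-three property of the given model structure on $\int_\M\F$. I will write out the argument showing that $\pi$ right Quillen forces $\F$ to be left proper; the parenthetical assertion is proved dually, replacing part (1) of Lemma~\ref{l:car-lem1} by part (2) and the identity natural transformation by the counit of $f_{!}\dashv f^*$.

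First I would recall that, composing $\F$ with the forgetful $(2,1)$-functor $\ModCat\lrar\AdjCat$ of \S\ref{ss:adj}, the projection $\pi:\int_\M\F\lrar\M$ underlies a biCartesian fibration whose canonical $\pi$-coCartesian lift of a morphism $f:A\lrar B$ starting at $(A,X)$ is $(f,\Id):(A,X)\lrar(B,f_{!}X)$; note also that $\int_\M\F$ equipped with the given model structure is in particular a pre-model category. So let $f:A\lrar B$ be a trivial cofibration in $\M$ and let $\vphi:X\lrar X'$ be a weak equivalence in $\F(A)$; the goal is to show that $f_{!}\vphi:f_{!}X\lrar f_{!}X'$ is a weak equivalence in $\F(B)$. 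I would then consider the square
$$\xymatrix{
(A,X) \ar^{(f,\Id)}[r]\ar_{(\Id,\vphi)}[d] & (B,f_{!}X) \ar^{(\Id,f_{!}\vphi)}[d] \\
(A,X') \ar_{(f,\Id)}[r] & (B,f_{!}X') \\
}$$
which commutes, both composites being equal to $(f,f_{!}\vphi)$ directly from the composition rule in $\int_\M\F$. Since $\pi$ is right Quillen and the horizontal maps are $\pi$-coCartesian lying over the trivial cofibration $f$, Lemma~\ref{l:car-lem1}(1) identifies them as trivial cofibrations in $\int_\M\F$, in particular weak equivalences, while assumption (2) of the corollary makes $(\Id,\vphi)$ a weak equivalence in $\int_\M\F$. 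Two applications of two-out-of-three — first to $(f,\Id)\circ(\Id,\vphi)=(f,f_{!}\vphi)$ and then to $(\Id,f_{!}\vphi)\circ(f,\Id)=(f,f_{!}\vphi)$ — then show that $(\Id,f_{!}\vphi)$ is a weak equivalence in $\int_\M\F$, whence assumption (2) applied at $B$ gives that $f_{!}\vphi$ is a weak equivalence in $\F(B)$, so that $\F$ is left proper.

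For the dual statement I would repeat this over $\pi$-Cartesian lifts: if $f:A\lrar B$ is a trivial fibration in $\M$ and $\vphi:Y\lrar Y'$ a weak equivalence in $\F(B)$, the canonical Cartesian lift of $f$ ending at $(B,Y)$ is $(f,\eps_Y):(A,f^*Y)\lrar(B,Y)$ with $\eps$ the counit of $f_{!}\dashv f^*$, and Lemma~\ref{l:car-lem1}(2) makes these trivial fibrations once $\pi$ is left Quillen. Naturality of $\eps$ is exactly what forces the analogous square (horizontal edges $(f,\eps_Y)$, $(f,\eps_{Y'})$; vertical edges $(\Id,f^*\vphi)$, $(\Id,\vphi)$) to commute, after which the same two-out-of-three manoeuvre shows $(\Id,f^*\vphi)$ is a weak equivalence in $\int_\M\F$, hence $f^*\vphi$ is a weak equivalence in $\F(A)$. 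Since the whole thing is a short diagram chase I do not expect a genuine obstacle; the only point demanding a moment's care is checking that the relevant square commutes — immediate on the coCartesian side and an instance of counit-naturality on the Cartesian side.
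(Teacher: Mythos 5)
Your argument is correct and is essentially the same as the paper's: same commuting square with $\pi$-coCartesian horizontal edges, same appeal to Lemma~\ref{l:car-lem1}(1) to see those edges are trivial cofibrations, and the same use of assumption (2) together with two-out-of-three to conclude that $f_{!}\vphi$ is a weak equivalence. The paper leaves the two-out-of-three step and the dual case implicit, whereas you spell them out, but the proof is the same.
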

\begin{proof}
We will prove that if $\pi$ is right Quillen then $\F$ is left proper. The dual case is completely analogous. Let $f: A \lrar B$ be a trivial cofibration in $\M$ and $\vphi: X \lrar X'$ is a weak equivalence in $\F(A)$. Then we have a commutative square in $\int_\F \M$ of the form
$$ \xymatrix{
(A,X) \ar[r]\ar_{(\Id,\vphi)}[d] & (B, f_!X) \ar^{(\Id,f_!\vphi)}[d] \\
(A,X') \ar[r] & (B,f_!X') \\
}$$
where the horizontal edges are $\pi$-coCartesian. By assumption (2) above $(\Id,\vphi)$ is a weak equivalence in $\int_\M\F$. From Lemma~\ref{l:car-lem1} it follows that the horizontal maps are trivial cofibrations and so $(\Id,f_!\vphi)$ is a weak equivalence in $\int_\M\F$. By assumption (2) above we get that $f_!\vphi$ is a weak equivalence in $\F(B)$.
\end{proof}

\begin{lem}\label{l:car-lem2}
Let $\pi: \N \lrar \M$ be a model fibration. Let
\begin{equation}\label{e:coc}
\xymatrix{
X \ar^{\psi}[r]\ar_{\vphi}[d] & Y \ar^{\vphi'}[d] \\
X' \ar_{\eta}[r] & Y' \\
}
\end{equation}
be a commutative diagram in $\N$. Then
\begin{enumerate}
\item
If $\psi,\eta$ are $\pi$-coCartesian, $\vphi$ is a (trivial) cofibration in $\N$ and $\pi \vphi'$ is a (trivial) cofibration in $\M$ then $f'$ is a (trivial) cofibration in $\N$.
\item
If $\psi,\eta$ are $\pi$-Cartesian, $\vphi$ is a (trivial) fibration in $\N$ and $\pi \vphi'$ is a (trivial) fibration in $\M$ then $\vphi'$ is a (trivial) fibration in $\N$.

\end{enumerate}
\end{lem}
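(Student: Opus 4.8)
The plan is to prove assertion~(1) carefully and to deduce assertion~(2) from it by passing to opposite categories: one checks quickly that $\pi^{\op}\colon\N^{\op}\lrar\M^{\op}$ is again a model fibration (the two relative weak factorization systems of Definition~\ref{d:relative-model} and conditions~(2) and~(3) of Definition~\ref{d:model-fib} are simply interchanged, and $\pi$-Cartesian morphisms become $\pi^{\op}$-coCartesian), while (trivial) fibrations in $\N$ are (trivial) cofibrations in $\N^{\op}$; so~(2) is literally assertion~(1) for $\pi^{\op}$.

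For~(1) I would \emph{not} verify a lifting property for $\vphi'$ directly, since $\M$ is merely a pre-model category and carries no liftings or factorizations of its own. Instead the plan is to exhibit $\vphi'$ as a retract of a morphism already known to lie in $\Cof_\N$ (resp.\ $\Cof_\N\cap\W_\N$) whose image under $\pi$ is $\pi\vphi'\in\Cof_\M$ (resp.\ $\Cof_\M\cap\W_\M$), and then to appeal to property~(1) of Definition~\ref{d:rel-wfs}. To build the retract, apply property~(2) of the relative weak factorization system $(\Cof_\N,\Fib_\N\cap\W_\N)$ (resp.\ $(\Cof_\N\cap\W_\N,\Fib_\N)$) to the trivial factorization $\pi\vphi' = \Id_{\pi(Y')}\circ\pi\vphi'$ in $\M$ (note $\Id_{\pi(Y')}\in\Fib_\M\cap\W_\M$): one obtains a factorization $\vphi' = \psi_0\circ\eta_0$ in $\N$ through an object $Y_0$, with $\eta_0\in\Cof_\N$ (resp.\ $\Cof_\N\cap\W_\N$), $\psi_0\in\Fib_\N\cap\W_\N$ (resp.\ $\Fib_\N$), $\pi\eta_0 = \pi\vphi'$ and $\pi\psi_0 = \Id_{\pi(Y')}$, so in particular $\pi(Y_0)=\pi(Y')$.

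It remains to construct a retraction $r\colon Y'\lrar Y_0$ with $r\vphi' = \eta_0$ and $\psi_0 r = \Id_{Y'}$; this is where coCartesianness of $\psi$ and $\eta$ enters. The manoeuvre is to precompose the square
$$\xymatrix{
Y \ar^{\eta_0}[r]\ar_{\vphi'}[d] & Y_0 \ar^{\psi_0}[d] \\
Y' \ar@{=}[r] & Y' \\
}$$
along the top with $\psi\colon X\lrar Y$ and along the bottom with $\eta\colon X'\lrar Y'$. The new square has left leg $\vphi\in\Cof_\N$ (resp.\ $\Cof_\N\cap\W_\N$) and right leg $\psi_0$, and since $\pi\psi_0 = \Id$ together with $\pi\eta\circ\pi\vphi = \pi\vphi'\circ\pi\psi = \pi\eta_0\circ\pi\psi$, its $\pi$-image admits the lift $u := \pi\eta$. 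Property~(3) of the relative weak factorization system, applied to this square and the base lift $u$, then supplies a morphism $h\colon X'\lrar Y_0$ with $h\vphi = \eta_0\psi$, $\psi_0 h = \eta$ and — the crucial point — $\pi h = \pi\eta$. This last identity is exactly what permits the $\pi$-coCartesian morphism $\eta$ to factor $h$: there is a unique $r\colon Y'\lrar Y_0$ with $r\eta = h$ and $\pi r = \Id_{\pi(Y')}$. One verifies $r\vphi' = \eta_0$ and $\psi_0 r = \Id_{Y'}$ by comparing the two sides after composing with $\psi$, respectively with $\eta$, and observing that they have equal images under $\pi$; by Remark~\ref{r:unique}, applied to the $\pi$-coCartesian morphisms $\psi$ and $\eta$, the two sides agree. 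Hence $\vphi'$ is a retract of $\eta_0$, and property~(1) of Definition~\ref{d:rel-wfs} yields $\vphi'\in\Cof_\N$ (resp.\ $\Cof_\N\cap\W_\N$).

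The one genuinely delicate step is the one flagged above: producing the lift $h$ inside the total category $\N$ with its \emph{prescribed} $\pi$-image $\pi\eta$, for otherwise $h$ cannot be descended along $\eta$ — and this is precisely what property~(3) of a relative weak factorization system is designed to deliver, the absolute lifting axiom being unavailable over a mere pre-model category. Everything after that is a routine diagram chase with the universal properties of (co)Cartesian morphisms and Remark~\ref{r:unique}.
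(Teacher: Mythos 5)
Your proof is correct, and it takes a route that is genuinely more careful than the paper's in one respect. Both arguments ultimately rest on condition~(3) of Definition~\ref{d:rel-wfs} applied to a square with $\vphi$ as one vertical, followed by descending the resulting lift along the coCartesian morphism $\eta$ and invoking Remark~\ref{r:unique}; your duality reduction of~(2) to~(1) matches the paper's ``completely analogous'' remark. Where you diverge is in what you lift against and how you close the argument. The paper sets out to show that $\vphi'$ has the left lifting property against an \emph{arbitrary} trivial fibration $\rho$ in $\N$, and to set this up it asserts that the projected right-hand square in $\M$ admits a lift; since $\M$ is only a pre-model category this is not automatic, and in any case even the full lifting property would not immediately put $\vphi'$ in $\Cof_\N$, since in a relative model category the class $\Cof_\N$ is not defined by a lifting property. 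You circumvent both difficulties: you first invoke the relative factorization axiom~(2) of Definition~\ref{d:rel-wfs} on the trivial factorization $\pi\vphi'=\Id\circ\pi\vphi'$ to produce $\vphi'=\psi_0\eta_0$ with $\pi\psi_0=\Id$, lift only against this one $\psi_0$ (for which the required base lift is simply $\pi\eta$), descend along $\eta$ to obtain the retraction, and then conclude via the relative retract axiom~(1). This makes explicit the last step (from a lift against $\psi_0$ to $\vphi'$ being a retract of $\eta_0$, hence $\vphi'\in\Cof_\N$), which the paper leaves unspoken. Minor nit: when you ``precompose'' the factorization square with $\psi$ and $\eta$, the resulting commuting rectangle has left edge $\vphi'\psi=\eta\vphi$, so the square you actually feed to the lifting axiom is the one with $\vphi$ on the left and $\eta_0\psi$ along the top; this is what you use, and it is fine, but the phrase ``has left leg $\vphi$'' deserves that one extra word of justification.
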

\begin{proof}
We shall prove assertion (1) above. The proof of assertion (2) is completely analogous.

Assume that $\vphi$ is a cofibration in $\N$ and $\pi \vphi'$ is a cofibration in $\M$. We shall show that $\vphi'$ has the left lifting property with respect to trivial fibrations in $\N$. Let

$$ \xymatrix{
X \ar^{\psi}[r]\ar_{\vphi}[d] & Y \ar^{\vphi'}[d]\ar^{\psi'}[r] & Z\ar^{\rho}[d] \\
X' \ar_{\eta}[r] & Y' \ar_{\eta'}[r] & Z' \\
}$$
be an extension of the diagram~\ref{e:coc} such that $\rho$ is a trivial fibration. The the right-most square in the projected diagram
\begin{equation}\label{e:rec-down}
\xymatrix{
\pi(X) \ar^{\pi \psi}[r]\ar_(0.4){\pi \vphi}[d] & \pi(Y) \ar[d]_(0.4){\pi \vphi'}\ar[r] & \pi(Z)\ar^(0.4){\pi \rho}[d] \\
\pi(X') \ar_{\pi \eta}[r]\ar@{-->}[urr] & \pi(Y') \ar@{..>}[ur]\ar[r] & \pi(Z') \\
}
\end{equation}
admits a dotted lift, yielding a dashed lift for the outer rectangle by composition. Using property (3) of Definition~\ref{d:model-fib} we can lift the dashed arrow of~\ref{e:rec-down} to a dashed arrow
\begin{equation}\label{e:rec-up}
\xymatrix{
X \ar^{\psi}[r]\ar_(0.3){\vphi}[d] & Y \ar_(0.3){\vphi'}[d]\ar^{\psi'}[r] & Z\ar^(0.3){\rho}[d] \\
X' \ar_{\eta}[r]\ar^(0.2){\nu}@{-->}[urr] & Y' \ar_{\xi}@{..>}[ur] \ar_{\eta'}[r] & Z' \\
}
\end{equation}
yielding a lift $\nu$ for the outer rectangle. Now the dotted arrow of~\ref{e:rec-down} is a factorization of $\pi \nu$ along $\pi \eta$. Since $\eta$ is $\pi$-coCartesian, there exists a unique dotted arrow $\xi$ in~\ref{e:rec-up} factorizing the dashed arrow of~\ref{e:rec-up} along $\eta$. By applying Remark~\ref{r:unique} to the $\pi$-coCartesian edge $\eta$ and to the pair $\rho\eta, \eta'$ we deduce that $\rho\xi=\eta'$. Similarly, by applying Remark~\ref{r:unique} to the $\pi$-coCartesian edge $\psi$ and the pair $\xi\vphi',\psi'$ we deduce that $\xi\vphi' = \psi'$. Hence $\xi$ is indeed a lift in the right-most square.

The case of $\vphi$ and $\pi \vphi'$ being trivial cofibrations can be proven using the same argument by taking $\rho$ to be an arbitrary fibration.

\end{proof}

\begin{proof}[Proof of Theorem~\ref{t:equiv}]
Let $\M$ be a pre-model category. We have a natural commutative diagram
$$\xymatrix{
\Fun_{\PR}(\M,\ModCat) \ar_{U}[d]\ar^(0.6){\int_\M}[r] & \ModFib(\M) \ar^{V}[d] \\
\Fun(\M,\AdjCat) \ar^(0.6){\simeq}[r] & \BiFib(\M) \\
}$$
where the vertical forgetful $(2,1)$-functors are faithful. It will hence suffice to verify the following claims
\begin{enumerate}
\item
The functor $\int_\M$ is essentially surjective.
\item
Let $\F,\G: \M \lrar \ModCat$ be proper relative functors and $(\sig,\tau): U(\F) \Rightarrow U(\G)$ be a pseudo-natural transformation such that the induced adjunction
$$ \xymatrix{
\int_\I\F \ar^{\sig_*}[rr]<1ex> \ar[dr] &&  \int_\I\G \ar^{\tau_*}_{\upvdash}[ll]<1ex> \ar[dl] \\
& \M & \\
}$$
is a Quillen adjunction. Then $(\sig,\tau)$ is a Quillen transformation.
\end{enumerate}

Let us begin by proving $(1)$. Let $\pi:\N \lrar \M$ be a model fibration. Then the underlying biCartesian fibration of $\pi$ determines a functor $\F: \M \lrar \AdjCat$. For each $A \in \M$, the category $\F(A)$ can be identified with the fiber $\N \times_{\M} \{A\}$ which inherits a natural structure of a model category by restricting $\W_\N,\Fib_\N$ and $\Cof_\N$ to $\N \times_{\M} \{A\}$ (that this is indeed a model structure can be seen by checking that $\N \times_{\M} \{A\} \lrar *$ is a relative model category). Furthermore, for each morphism $f: A \lrar B$ the corresponding adjunction
$$ \xymatrix{
\F(A) \ar^{f_!}[r]<1ex>  & \F(B) \ar^{f^*}_{\upvdash}[l]<1ex>  \\
}$$
is a Quillen adjunction: this can be seen by applying Lemma~\ref{l:car-lem2} to squares of the form
$$
\xymatrix{
X \ar[r]\ar_{\vphi}[d] & Y \ar^{f_!\vphi}[d] \\
X' \ar[r] & Y' \\
}$$
where $\vphi: X \lrar X'$ is a (trivial) cofibration covering the identity $A \lrar A$ and the horizontal maps are $\pi$-coCartesian lifts of $f: A \lrar B$. We can hence consider $\F$ as a functor $\M \lrar \ModCat$. According to Corollary~\ref{c:necessary} the functor $\F$ is proper. It is hence left to show that $\F$ is relative.

Let $f: A \lrar B$ be a weak equivalence in $\M$. Let $X \in \N$ be a cofibrant object lying over $A$ and $Y \in \N$ a fibrant object lying over $B$. Let $\psi:X \lrar f_!X$ be a coCartesian lift of $f$ starting at $X$ and let $\eta:f^*Y \lrar Y$ be a Cartesian lift of $f$ ending at $Y$. Then any map $\vphi: X \lrar Y$ lying over $f$ determines both a map $\phi: f_!X \lrar Y$ in $\F(B)$ by factoring along a $\psi$ and a map $\phi^{\ad}: X \lrar f^*Y$ by factoring along $\eta$, where $\phi$ and $\phi^{\ad}$ are adjoints with respect to $f_! \dashv f^*$. One then obtains a commutative diagram of the form
$$ \xymatrix{
X \ar^{\psi}[r]\ar^{\vphi}[dr]\ar_{\phi^{\ad}}[d] & f_!X \ar^{\phi}[d] \\
f^*Y \ar_{\eta}[r] & Y \\
}$$
According to property $(4)$ of Definition~\ref{d:model-fib} we know that $\psi$ and $\eta$ are weak equivalences. From the relative $2$-out-of-$3$ property (condition (2) of Definition~\ref{d:relative-model}) we deduce that
$$ \phi^{\ad} \in \W_{\N} \Leftrightarrow \vphi \in \W_{\N} \Leftrightarrow \phi \in \W_{\N} $$
and hence $f_! \dashv f^*$ is a Quillen equivalence. This proves claim $(1)$ above.

Let us now prove assertion $(2)$. We need to show that for each $A \in \M$ the induced adjunction
$$ \xymatrix{
\F(A) \ar^{\sig_A}[r]<1ex>  & \G(A) \ar^{\tau_A}_{\upvdash}[l]<1ex>  \\
}$$
is a Quillen adjunction. But this follows directly from the fact that for each $A \in \M$, the model categories $\F(A),\G(A)$ can be identified with the fibers $\int_\M\F\times_\M\{A\}$ and $\int_\M\F\times_\M \{B\}$ respectively with their inherited model structure and that $\sig_A \dashv \tau_A$ can be identified with the adjunction induced by $\sig \dashv \tau$.
\end{proof}

\section{Examples}\label{s:examples}
In this section we shall give several applications to Theorem \ref{qa}.

\subsection{(co)Slice categories}\label{ss:slice}
Let $\M$ is a model category. For every object $X\in \M$, we can consider the slice category $\M_{/X}$ of objects over $X$. This category can be endowed with a model structure (see~\cite{Hir}) in which a map $f: A \lrar B$ over $X$ is a weak equivalence (resp. fibration, cofibration) if and only if $f$ is a weak equivalence (resp. fibration, cofibration) in $\M$.  Every map $\vphi: X \lrar Y$ induces a Quillen adjunction
$$ \xymatrix{\M_{/X}\ar[r]<1ex>^{\vphi_{!}} & \M_{/Y}\ar[l]<1ex>_(0.5){\upvdash}^{\vphi^*}}$$
where $\vphi_{!}$ is given by composition with $\vphi$ and $\vphi^*$ is given by the fiber product.
\begin{lem}\label{slice proper}
For any model cateogry $\M$, the functor $\M_{/(-)}: \M \lrar \ModCat$ is proper.
\end{lem}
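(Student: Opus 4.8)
The plan is to verify the two conditions of Definition~\ref{d:proper} for $\F = \M_{/(-)}$. Recall that for $\vphi: X \lrar Y$ in $\M$ the left adjoint $\vphi_!: \M_{/X} \lrar \M_{/Y}$ is post-composition with $\vphi$, and the right adjoint $\vphi^*: \M_{/Y} \lrar \M_{/X}$ is pullback along $\vphi$; and in the slice model structures a map is a weak equivalence if and only if it is one in $\M$. So left properness of $\F$ asks: if $\vphi: X \lrar Y$ is a trivial cofibration in $\M$, then post-composition with $\vphi$ preserves weak equivalences; and right properness asks: if $\vphi$ is a trivial fibration, then pullback along $\vphi$ preserves weak equivalences.

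First I would dispatch left properness, which is immediate and does not even use that $\vphi$ is a trivial cofibration: if $g: (A \to X) \lrar (B \to X)$ is a weak equivalence of objects over $X$, then $\vphi_!(g)$ is the very same underlying map $g: A \lrar B$, now regarded as a map over $Y$ via $\vphi$. Since weak equivalences in both slice categories are detected on underlying objects of $\M$, $\vphi_!(g)$ is again a weak equivalence. (In fact $\vphi_!$ preserves all weak equivalences for arbitrary $\vphi$.)

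The substantive part is right properness. Let $\vphi: X \lrar Y$ be a trivial fibration in $\M$ and let $g: (A \to Y) \lrar (B \to Y)$ be a weak equivalence over $Y$. I must show that the induced map $\vphi^*(g): A \times_Y X \lrar B \times_Y X$ is a weak equivalence in $\M$. The key point is that $\vphi$, being a trivial fibration, is in particular a fibration, so both pullbacks $A \times_Y X \lrar A$ and $B \times_Y X \lrar B$ are base changes of the fibration $\vphi$ along the structure maps $A \lrar Y$ and $B \lrar Y$; in a general model category the pullback of a fibration is a fibration, but this alone does not give that pullback preserves weak equivalences between arbitrary objects — that would require right properness of $\M$ itself, which we are not assuming. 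Instead I would use the stronger fact that $\vphi$ is a \emph{trivial} fibration: then each projection $A \times_Y X \lrar A$ and $B \times_Y X \lrar B$ is a base change of a trivial fibration, hence a trivial fibration (trivial fibrations are stable under pullback in any model category, being characterized by a right lifting property). Now apply two-out-of-three to the commutative square
$$\xymatrix{
A \times_Y X \ar[r]^{\vphi^*(g)}\ar[d]_{\sim} & B \times_Y X \ar[d]^{\sim} \\
A \ar[r]_{g}^{\sim} & B \\
}$$
whose two vertical maps are trivial fibrations, hence weak equivalences, and whose bottom map is a weak equivalence by hypothesis; therefore the top map $\vphi^*(g)$ is a weak equivalence, as desired.

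I do not expect any serious obstacle here; the only point to be careful about is not to inadvertently invoke right properness of $\M$. The argument hinges entirely on the elementary stability of trivial fibrations under pullback, so the proof reduces to that observation plus two-out-of-three. Assembling these two cases establishes that $\M_{/(-)}$ is both left and right proper, hence proper.
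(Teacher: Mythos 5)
Your proof is correct and follows essentially the same route as the paper: left properness is immediate since $\vphi_!$ is literal post-composition and weak equivalences are detected in $\M$, and right properness follows from the stability of trivial fibrations under pullback together with two-out-of-three in the same commutative square used in the paper.
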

\begin{proof}
First observe $\vphi_{!}$ sends weak equivalences to weak equivalences for any $\vphi$.
Now assume that $\vphi:X\overset{\sim}{\thrar} Y$ is a trivial fibration and let $A\overset{\sim}{\lrar} B$ be a weak equivalence in $\M_{/Y}$. Consider the square
$$
\xymatrix{
A\times_Y X \ar[d] \ar[r] & B\times_Y X\ar[d]\\
A\ar[r] & B.
}$$
The maps $A\times_Y X\rar A$ and $B\times_Y X \lrar B$ are trivial fibrations as they were obtained by pulling back a trivial fibration. It follows by two-out-of-three that $A\times_Y X \lrar B\times_Y X$ is a weak equivalence.
\end{proof}

When $M$ is \textbf{right proper} the dependence of $\M_{/X}$ on $X$ is homotopy invariant, i.e., when $\vphi: X \lrar Y$ is a weak equivalence, the adjunction $\vphi_{!} \dashv \vphi^*$ is a Quillen equivalence (~\cite{Hir}). In other words, in this case we obtain a \textbf{relative functor}
$$ \M_{/(-)}: \M \lrar \ModCat $$
The category $\displaystyle\int_{\M} \M_{/(-)}$ is isomorphic to the arrow category $\M^{[1]}$ and Theorem~\ref{model structure} ensures that we get a model structure. Under this identification, this is precisely the injective model structure. In particular, we obtain the following:
\begin{cor}
Let $\M$ be a right proper model category. Then the projection
$$ \M^{[1]} \lrar \M $$
is a model fibration, where $\M^{[1]}$ is endowed with the injective model structure.
\end{cor}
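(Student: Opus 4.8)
The plan is to exhibit the projection $\M^{[1]}\lrar\M$ as the model fibration associated with the slice functor and then quote Theorem~\ref{t:mod-fib}. By Lemma~\ref{slice proper} the functor $\M_{/(-)}\colon\M\lrar\ModCat$ is proper for any model category $\M$, and the result of~\cite{Hir} recalled above shows that when $\M$ is right proper the adjunction $\vphi_!\dashv\vphi^*$ is a Quillen equivalence for every weak equivalence $\vphi$; hence $\M_{/(-)}$ is also relative. So $\M_{/(-)}$ is a proper relative functor, and Theorem~\ref{t:mod-fib} applies directly to yield that $\pi\colon\int_\M\M_{/(-)}\lrar\M$ is a model fibration.

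It then remains to transport this across the isomorphism $\int_\M\M_{/(-)}\cong\M^{[1]}$ already recorded above. Concretely, an object $(A,(x\colon X\to A))$ of $\int_\M\M_{/(-)}$ corresponds to the arrow $x$, and a morphism $(f,\phi)\colon(A,x)\lrar(B,y)$ --- given by $f\colon A\to B$ in $\M$ together with a map $\phi$ in $\M_{/B}$ from $f_!x=f\circ x$ to $y$, that is, a map $X\to Y$ over $B$ --- corresponds to the evident commutative square; this is an isomorphism of categories over $\M$ under which $\pi$ becomes ``take the codomain''. I would then check that the three classes of Definition~\ref{d:model} correspond to the injective ones on $\M^{[1]}$: $(f,\phi)$ is a cofibration iff both $f$ and $\phi\colon X\to Y$ are cofibrations in $\M$; it is a fibration iff $f$ is a fibration in $\M$ and the adjoint $\phi^{\ad}\colon X\to f^*y=Y\times_B A$ is a fibration in $\M$, which is precisely the injective fibration condition on a morphism of arrows; and, using that $\vphi_!$ preserves weak equivalences (as noted in the proof of Lemma~\ref{slice proper}) together with Lemma~\ref{characterization}, $(f,\phi)$ is a weak equivalence iff both $f$ and $\phi\colon X\to Y$ are weak equivalences in $\M$. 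Hence the integral model structure is the injective one, and the model fibration $\pi$ is the projection $\M^{[1]}\lrar\M$.

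Since all the real content is contained in Lemma~\ref{slice proper} and Theorem~\ref{t:mod-fib}, the argument is largely bookkeeping. The only step requiring a little attention is the identification of the fibrations: one unwinds $\phi^{\ad}$ with respect to $f_!\dashv f^*$, recognizes $f^*y$ as the pullback $Y\times_B A$, and observes that ``$\phi^{\ad}$ a fibration in $\M_{/A}$'' means by definition ``$X\to Y\times_B A$ a fibration in $\M$'' --- exactly the second half of the injective fibration condition; everything else is immediate.
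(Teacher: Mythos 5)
Your proposal is correct and follows exactly the route the paper intends: properness from Lemma~\ref{slice proper}, relativity from right properness of $\M$ via the cited result of Hirschhorn, Theorem~\ref{t:mod-fib} to get a model fibration, and then the isomorphism $\int_\M \M_{/(-)}\cong\M^{[1]}$ under which the integral classes match the injective ones. One small remark: Lemma~\ref{characterization} is not really what identifies the weak equivalences --- you only need that $f_!$ preserves all weak equivalences (as noted in the proof of Lemma~\ref{slice proper}) together with two-out-of-three to pass from $f_!(X^{\cof})\to Y$ to $f_!X\to Y$ --- but this does not affect the correctness of the argument.
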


\begin{rem}
Dually, for each $X\in \M$ one can consider the coslice category $\M_{X / }$. If $\M$ is left proper, the previous considerations dualize to show that $$ \M_{(-)/}: \M \lrar \ModCat $$ is a proper relative functor. In this case the model category of Theorem~\ref{model structure} is the projective model structure on the arrow category $\M^{[1]}$.
\end{rem}

\subsection{Group actions}\label{ss:group-act}

In this example we will show how to use Theorem~\ref{model structure} in order to obtain a (coarse) \textbf{global equivariant homotopy theory} for group actions. For such a theory to be widely applicable one would like to be able to work in a setting of coherent group actions. This will be undertaken thoroughly in a subsequent paper~\cite{HP} using Segal group actions as developed in~\cite{Pra}. In this subsection we will content with presenting a strict and a weak model for group actions and using Theorem~\ref{qa} to relate the two.

Throughout this section the word \textbf{space} will always mean a simplicial set. Let $\sGr$ be the category of simplicial groups. This category admits a model structure which is transferred from the Kan-Quillen model structure on spaces via the adjunction
$$ \xymatrix@=13pt{
\SS \ar[rr]<1ex>^(0.5){F} && \sGr \ar[ll]<1ex>_(0.5){\upvdash}^(0.5){U}
}$$
where $U$ is the forgetful functor and $F$ is the free group functor. In particular, a map of simplicial groups $f: G \lrar H$ is a weak equivalence (resp. fibration) if and only if the map $U(f): U(G) \lrar U(H)$ is a weak equivalence (resp. fibration). In addition, in this case if $f: G \lrar H$ is a cofibration then $U(f): U(G) \lrar U(H)$ is a cofibration as well.

For every simplicial group $G$ one can consider the category $\SS_G$ of spaces endowed with an action of $G$. This category can be identified with the simplicial functor category $\SS^{\B G}$ where $\B G$ is the simplicial groupoid with one object having $G$ as its automorphism group. As such one can consider $\SS^{\B G}$ with the \textbf{projective model structure}, also called the Borel model structure. In this model structure a map of $G$-spaces is a weak equivalence (resp. fibrations) if and only if it is such as a map of spaces. In addition, a $G$-space $X$ is cofibrant if and only if the action of $G$ on $X$ is \textbf{free} in each simplicial degree (see \cite[Proposition $2.2$ $(ii)$]{DDK}).

Now let $f: G \lrar H$ be a map of simplicial groups. Then we have a Quillen adjunction
$$ \xymatrix@=13pt{
\SS^{\B G} \ar[rr]<1ex>^(0.5){f_{!}} && \SS^{\B H} \ar[ll]<1ex>_(0.5){\upvdash}^(0.5){f^*}
}$$
where $f_{!}(X) = H \times_G X$ is the quotient of $H \times X$ by the action of $G$ given by $g(h,x) = (hg^{-1},gx)$.
and $f^*(X) = \res^H_G(X)$ is the restriction functor.

We then have the following.
\begin{pro}\label{p:G-spaces}
The functor $\U: \sGr \lrar \ModCat$ given by $\U(G) = \SS^{\B G}$ is proper and relative.
\end{pro}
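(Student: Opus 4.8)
The plan is to verify the two conditions of Definition~\ref{d:relative} (relativeness) and Definition~\ref{d:proper} (left and right properness) for the functor $\U:\sGr\lrar\ModCat$, $\U(G)=\SS^{\B G}$, using the explicit description of $f_{!}$ and $f^*$ given above together with the transferred model structure on $\sGr$.

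First I would treat \textbf{left properness}. Let $f:G\thrar H$ be a trivial cofibration in $\sGr$; then $U(f)$ is a trivial cofibration of spaces, and in particular an acyclic injection. I want to show $f_{!}(-)=H\times_G(-)$ preserves weak equivalences of $G$-spaces. The key point is that since the map $U(f)$ is an inclusion of simplicial groups which is a weak equivalence, the induced map on quotients $X\lrar H\times_G X$ is a weak equivalence whenever the $G$-action on $X$ is free in each degree, and more generally $H\times_G(-)$ computes the homotopy quotient along $f$ correctly. Concretely, for a weak equivalence $\alpha:X\lrar X'$ of $G$-spaces one forms $H\times_G X\lrar H\times_G X'$; degreewise this is the map $H_n\times_{G_n}X_n\lrar H_n\times_{G_n}X'_n$, and since $G_n\hookrightarrow H_n$ is an injection of groups the set $H_n$ is a free $G_n$-set, so $H_n\times_{G_n}(-)$ is (degreewise) a coproduct of copies of $(-)$ indexed by $H_n/G_n$ and hence preserves weak equivalences of the underlying simplicial sets. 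This settles left properness.

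Next, \textbf{right properness}: let $f:G\thrar H$ be a trivial fibration in $\sGr$, so $U(f)$ is a trivial fibration of spaces, in particular surjective with a section up to homotopy; I need $f^*=\res^H_G$ to preserve weak equivalences of $H$-spaces. But $\res^H_G$ does not change the underlying space at all, and weak equivalences of $G$-spaces (resp.\ $H$-spaces) are detected on underlying spaces in the Borel model structure; hence $f^*$ preserves \emph{all} weak equivalences, for any $f$, and right properness is immediate. (In fact this observation shows $f^*$ preserves weak equivalences for every map $f$, not just trivial fibrations; the same is true for $f_!$ by the coproduct argument above once one knows $G_n\hookrightarrow H_n$, which holds whenever $f$ is a cofibration.)

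Finally, \textbf{relativeness}: let $f:G\lrar H$ be a weak equivalence in $\sGr$, i.e.\ $U(f)$ is a weak equivalence of spaces; I must show $f_{!}\dashv f^*$ is a Quillen equivalence between the Borel model structures. Since all weak equivalences are detected on underlying spaces, it suffices to check the derived unit and counit are weak equivalences. For a cofibrant $G$-space $X$ (free $G$-action in each degree), the unit $X\lrar f^*f_{!}X=\res^H_G(H\times_G X)$ has underlying map $X\lrar H\times_G X$; since $G_n\to H_n$ is a weak equivalence of simplicial groups acting freely, this map is a weak equivalence (it is a map of free $G_n$-sets over the weak equivalence $BG_n\to BH_n$ at the level of Borel constructions — equivalently, $EG\times_G X\simeq EH\times_H(H\times_G X)$). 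For a fibrant $H$-space $Y$, one takes a cofibrant replacement of $\res^H_G Y$ and checks the counit $f_{!}(\res^H_G Y)^{\cof}\lrar Y$ is a weak equivalence by the same Borel-construction comparison. The main obstacle is this last point: making the comparison of homotopy orbits precise, i.e.\ checking that for a weak equivalence $G\to H$ of simplicial groups the derived induction/restriction adjunction on Borel-equivariant spaces is an equivalence. This is essentially the statement that $BG\to BH$ being a weak equivalence implies the associated map of slice $\infty$-categories of spaces over it is an equivalence, and it can be proven directly by a degreewise analysis of free actions together with the fact that $(-)\times_G EG$ models the homotopy orbit; alternatively one may invoke \cite{DDK} where the behaviour of $\SS^{\B G}$ under change of $G$ is analysed. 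With relativeness in hand, together with left and right properness established above, $\U$ is a proper relative functor.
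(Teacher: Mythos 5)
Your overall plan matches the paper's (decompose into left/right properness and relativeness, use the explicit formulas for $f_{!}$ and $f^*$, exploit freeness of actions), and your right-properness argument is exactly the paper's: $f^*=\res^H_G$ does not change the underlying space, and weak equivalences in the Borel model structure are detected underlying. However, there are two issues in the other parts.

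For \textbf{left properness}, the ``concrete'' justification you give is not a valid argument. You observe that in each simplicial degree $n$ the set $H_n\times_{G_n}X_n$ is in (non-canonical) bijection with a coproduct of copies of $X_n$ indexed by $H_n/G_n$, and conclude that $f_{!}=H\times_G(-)$ preserves weak equivalences. But this decomposition is purely set-theoretic and does not respect the simplicial structure (the face and degeneracy maps mix the ``summands'', and the choice of coset representatives cannot be made simplicially in general); a degreewise coproduct decomposition of sets says nothing about weak equivalences of simplicial sets. Your preceding informal sentence --- that $H\times_G(-)$ computes the homotopy quotient along $f$ --- is the right idea and is what the paper uses: since $U(f)$ is a cofibration (hence a levelwise monomorphism of groups), the $G$-action on $H$ by $g\cdot h=hf(g)^{-1}$ is levelwise free, hence so is the $G$-action on $H\times X$ for \emph{every} $G$-space $X$; therefore the strict quotient $H\times_G X=(H\times X)/G$ agrees with the homotopy quotient $(H\times X)_{hG}$, and homotopy quotients preserve weak equivalences of underlying spaces. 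That is the argument you should make precise; the coproduct step should be removed.

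For \textbf{relativeness}, your argument is not wrong but is longer than necessary, and you yourself flag the remaining gap (``making the comparison of homotopy orbits precise''). The paper short-circuits this: since $f^*$ preserves and reflects all weak equivalences, the Quillen adjunction $f_{!}\dashv f^*$ is a Quillen equivalence as soon as the unit $X\lrar f^*f_{!}X$ is a weak equivalence for every cofibrant $X$ (no separate counit check, and no fibrant replacement of $f_!X$ is needed). For cofibrant $X$ the $G$-action is free, so the $G$-action on $H\times X$ (with the twisted action) is free, making $H\times_G X$ a homotopy quotient; then the $G$-equivariant weak equivalence $G\times X\lrar H\times X$ (which follows from $f$ being a weak equivalence of underlying spaces) induces a weak equivalence on quotients $X=G\times_G X\lrar H\times_G X$, which is precisely the unit. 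Filling the gap you flagged with this argument would align your proof with the paper's.
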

\begin{proof}
We first prove that $\U$ is relative. Let $f: G \lrar H$ be a weak equivalence of simplicial groups. Since $f^*$ preserves and detects weak equivalences it will be enough to show that for each cofibrant $G$-space $X$ the unit map
$$ X \lrar f^*f_{!}(X) $$
is a weak equivalence. For this it will be enough to prove that for every cofibrant $G$-space $X$ the map
$$ X \lrar H \times_G X $$
is a weak equivalence of spaces. Since $X$ is cofibrant the action of $G$ on $X$ is free and hence the action of $G$ on $H \times X$ given by $g(h,x) = (hg^{-1},gx)$ is free as well, so that the quotient $H \times_G X = (H \times X)/G$ coincides with the homotopy quotient. Since the map $f: G \lrar H$ is a weak equivalence we get that the map $G \times X \lrar H \times X$ is a weak equivalence and so the induced map
$$ X = G \times_G X \lrar H \times_G X $$
is a weak equivalence as desired.

We shall now prove that $\U$ is proper. Since the restriction functors always preserve weak equivalences it will be enough to handle the left Quillen functors. Let $f: G \lrar H$ be a trivial cofibration of simplicial groups. Then $U(f): U(G) \lrar U(H)$ is a cofibration and so the action of $G$ on $H$ given by $g(h) = hf(g)^{-1}$ is free. This, in turn, implies that the action of $G$ on $H \times X$ is free for every $G$-space $X$ and so the quotient $H \times_G X = (H \times X)/G$ coincides with the homotopy quotient. This means that the functor $f_{!}(X) = H \times_G X$ preserves weak equivalences as desired.

\end{proof}

\begin{cor}
There exists a model structure on $\int_{G \in \sGr} \SS^{G}$ such that the projection
$$ \int_{G \in \sGr} \SS^{G} \lrar \sGr $$
is a model fibration.
\end{cor}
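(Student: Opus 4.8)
The plan is to deduce this directly from the general machinery of Sections~\ref{s:integral} and~\ref{s:model-fib}, since all the genuine content has already been packaged into Proposition~\ref{p:G-spaces}. First I would observe that $\sGr$ is a bona fide model category: it is bicomplete, being a category of simplicial objects in groups, and it carries the model structure transferred from the Kan--Quillen model structure on $\SS$ along the free-forgetful adjunction $F \dashv U$ recalled above. Hence $\sGr$ is a legitimate choice for the indexing model category $\M$ in Theorem~\ref{model structure}.

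Next, Proposition~\ref{p:G-spaces} states exactly that the functor $\U : \sGr \lrar \ModCat$, $G \mapsto \SS^{\B G}$, is proper and relative. Applying Theorem~\ref{model structure} with $\M = \sGr$ and $\F = \U$ then equips $\int_{G \in \sGr}\SS^{\B G} = \int_{\sGr}\U$ with the integral model structure, whose weak equivalences, fibrations and cofibrations are those of Definition~\ref{d:model}. Finally, viewing $\sGr$ as a pre-model category (every model category is one), Theorem~\ref{t:mod-fib} applies verbatim to the same proper relative $\U$ and yields that the projection $\pi : \int_{G \in \sGr}\SS^{\B G} \lrar \sGr$ is a model fibration; this is precisely the assertion.

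I do not expect any real obstacle here. The substantive verifications --- that $\U$ sends weak equivalences of simplicial groups to Quillen equivalences, and that it carries trivial cofibrations (resp.\ trivial fibrations) to weak-equivalence-preserving left (resp.\ right) Quillen functors --- were already carried out in Proposition~\ref{p:G-spaces}, using that cofibrant $G$-spaces have degreewise free $G$-action so that the strict quotient $H \times_G X$ computes the homotopy quotient. The only minor point to address explicitly is the identification of notation, namely that $\SS^{G}$ in the statement denotes the Borel (projective) model structure on $\SS^{\B G}$ discussed above; once this is fixed, the corollary is an immediate specialization of Theorems~\ref{model structure} and~\ref{t:mod-fib}.
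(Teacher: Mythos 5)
Your proposal is correct and follows exactly the route the paper intends: the corollary is an immediate specialization of Theorem~\ref{model structure} and Theorem~\ref{t:mod-fib} to the proper relative functor $\U:\sGr\lrar\ModCat$ established in Proposition~\ref{p:G-spaces}, with $\sGr$ carrying the transferred model structure. The paper leaves the corollary without proof for precisely this reason, and your notational remark identifying $\SS^G$ with the Borel model structure on $\SS^{\B G}$ is the right clarification to make.
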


Let us now consider a weak model for group actions. We will say that a space $X \in \SS$ is \textbf{reduced} if $X_0 = \{*\}$ and will denote by $\SS_{0}$ the category of reduced spaces. According to Proposition $\mathrm{VI}.6.2$ of~\cite{GJ} there exists a model structure on $\SS_0$ in which the weak equivalences and cofibrations are those of the underlying spaces.

The full inclusion $\iota: \SS_0 \lrar \SS$ then becomes a left Quillen functor which preserves weak equivalences. We have the following observation:
\begin{pro}
The functor $\V:\SS_0 \lrar \ModCat$ defined by $\V(X) = \SS_{/\iota(X)}$  is proper and relative.
\end{pro}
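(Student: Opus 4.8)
The plan is to recognize $\V$ as the composite of the inclusion $\iota\colon \SS_0 \lrar \SS$ with the slice pseudofunctor $\SS_{/(-)}\colon \SS \lrar \ModCat$, and then to transport the conclusions of Lemma~\ref{slice proper} (together with the remark immediately following it, which records that $\SS_{/(-)}$ is relative because the Kan--Quillen model structure on $\SS$ is right proper) along $\iota$. The three properties of $\iota$ I will use are: it preserves weak equivalences, it is a left Quillen functor, and it carries trivial fibrations of $\SS_0$ to trivial fibrations of $\SS$.

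\textbf{Relativeness.} If $f\colon X \lrar Y$ is a weak equivalence in $\SS_0$ then $\iota(f)$ is a weak equivalence in $\SS$, the weak equivalences of $\SS_0$ being the underlying ones; hence by the remark after Lemma~\ref{slice proper} the adjunction $\iota(f)_! \dashv \iota(f)^*$ is a Quillen equivalence, which is exactly the condition of Definition~\ref{d:relative} for $\V$.

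\textbf{Properness.} Left properness is immediate: the proof of Lemma~\ref{slice proper} observes that $\vphi_!$ preserves weak equivalences for \emph{every} map $\vphi$ in $\SS$, so in particular $\iota(f)_!$ does when $f$ is a trivial cofibration in $\SS_0$. The substance is right properness, where I must show $\iota(f)^*$ preserves weak equivalences when $f\colon X \lrar Y$ is a trivial fibration in $\SS_0$; by Lemma~\ref{slice proper} applied to $\SS$ it suffices that $\iota(f)$ be a trivial fibration in $\SS$. This I would establish directly: since the cofibrations of $\SS_0$ are exactly the monomorphisms of reduced simplicial sets, a trivial fibration $p\colon E \lrar B$ of $\SS_0$ has the right lifting property against every monomorphism of reduced simplicial sets, and I claim it then has it against $\partial\Delta^n \hookrightarrow \Delta^n$ for all $n \ge 0$, whence $p$ is a trivial Kan fibration. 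For $n = 0$ this is automatic since $E$ and $B$ are reduced; for $n \ge 1$, in any lifting problem of $p$ against $\partial\Delta^n \hookrightarrow \Delta^n$ both the top and the bottom map are constant on vertices (their codomains being reduced), hence factor through the quotient $\partial\Delta^n/(\Delta^n)_0 \hookrightarrow \Delta^n/(\Delta^n)_0$ obtained by collapsing the $0$-skeleton $(\Delta^n)_0$ to a point; this is a monomorphism of reduced simplicial sets, so the reduced lifting problem is solvable, and any solution composes back to one for the original problem. Thus $\iota(f)$ is a trivial fibration in $\SS$, $\iota(f)^*$ preserves weak equivalences, and $\V$ is proper in the sense of Definition~\ref{d:proper}.

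The only step that is not a purely formal transport along $\iota$ is the claim that $\iota$ preserves trivial fibrations --- equivalently, that the trivial fibrations of $\SS_0$ are precisely the trivial Kan fibrations between reduced simplicial sets --- and I expect that to be the one point requiring genuine care.
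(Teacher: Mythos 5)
Your proof is correct and follows the same overall plan as the paper: view $\V$ as the slice pseudofunctor $\SS_{/(-)}$ precomposed with $\iota$, then transport Lemma~\ref{slice proper} together with the fact (stated in \S\ref{ss:slice}) that right properness of $\SS$ makes $\SS_{/(-)}$ relative. The one place where you depart from the paper is the key lemma that $\iota$ carries trivial fibrations of $\SS_0$ to trivial Kan fibrations: the paper simply cites Lemma~$\mathrm{V}.6.6$ of Goerss--Jardine, whereas you give a self-contained combinatorial argument by factoring any lifting problem against $\partial\Delta^n \hookrightarrow \Delta^n$ (with reduced targets) through the quotient $\partial\Delta^n/(\Delta^n)_0 \hookrightarrow \Delta^n/(\Delta^n)_0$, which is a monomorphism of reduced simplicial sets. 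That argument is sound --- the induced map on quotients is indeed a monomorphism since $\partial\Delta^n$ and $\Delta^n$ share the same $0$-skeleton, both quotients have a single vertex, and a lift downstairs composes back to a lift upstairs; the $n=0$ case is trivial by reducedness. What your version buys is that the proof is self-contained and makes visible exactly why the preservation of trivial fibrations holds, at the cost of slightly more length; it establishes the same fact the paper takes from the reference, so the two proofs are otherwise identical in structure.
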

\begin{proof}
Lemma $\mathrm{V}.6.6$ of~\cite{GJ} implies that $\iota$ preserves trivial fibrations. Since $\SS$ is right proper, the desired result now follows from the discussion in \S\ref{ss:slice}.
\end{proof}

We now wish to compare the functor $\V$ with the functor $\U$ discussed in Proposition~\ref{p:G-spaces}. 
For this we will consider the Quillen equivalence (see~\cite[$\mathrm{V}.6.3$]{GJ})
$$ \xymatrix@=13pt{\SS_0\ar[rr]<1ex>^(0.5){\GG} &&  \sGr \ar[ll]<1ex>_(0.5){\upvdash}^(0.5){\ovl{W}}} $$
where $\GG$ is the Kan loop group functor. We wish to present a \textbf{right Quillen equivalence} (see Definition~\ref{d:left-right-equiv}) from $\V$ to $\U$. For this we need to describe a compatible family of Quillen adjunctions
$$\xymatrix{
\Theta^L_G:\SS_{/\iota\ovl{W}(G)} \ar[r]<1.1ex> & \SS^{\B G}:\Theta^R_G\ar[l]<1.1ex>_(0.45){\upvdash}}$$
indexed by $G \in \sGr$, which are equivalences for fibrant $G$ (i.e. for all $G$). Such a compatible family is provided by the work of~\cite{DDK}. More explicitly, for every $G$-space $X$ one defines
$$ \Theta^R_G(X) = (W(G) \times X)/G $$
with its natural map $(W(G) \times X)/G \lrar \ovl{W}(G)$ given by the projection on the first coordinate (where $W: \sGr \lrar \SS$ is defined as in~\cite[V.4]{GJ}). To see that this family of Quillen equivalences is indeed compatible (i.e., constitutes a pseudo-natural transformation of adjunctions), one has to verify that for each map $G \lrar H$ of simplicial groups and every $H$-space $X$ the natural commutative diagram of simplicial sets
$$ \xymatrix{
(W(G) \times X)/G \ar[r]\ar[d] & (W(H) \times X)/H \ar[d] \\
\ovl{W}(G) \ar[r] & \ovl{W}(H) \\
}$$
is Cartesian. This can be verified directly using the fact that the action of $G$ on $W(G)$ (and the action of $H$ on $W(H)$) is free in each simplicial degree.

In light of Theorem~\ref{qa} and the above we obtain the following
\begin{cor}\label{c:equivalence-1}
There exists a Quillen equivalence
$$
\xymatrix{\Phi^L: \displaystyle\mathop{\int}_{X \in \SS_0}\SS_{/\iota(X)} \ar[r]<2ex> & \displaystyle\mathop{\int}_{G \in \sGr}\SS^{\B G}:\Phi^R\ar[l]<0.7ex>_(0.5){\upvdash}.}
$$
\end{cor}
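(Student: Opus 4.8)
The plan is to derive the corollary as a direct application of Theorem~\ref{qa} to the base change given by the Kan loop group adjunction $\GG \dashv \ovl{W}$ between $\SS_0$ and $\sGr$. All of the ambient hypotheses are already in place: $\V:\SS_0 \lrar \ModCat$ and $\U:\sGr \lrar \ModCat$ have been shown to be proper and relative, and $\GG \dashv \ovl{W}$ is a Quillen equivalence. It therefore remains only to organize the family $\left(\Theta^L_G,\Theta^R_G\right)_{G \in \sGr}$ recalled above into a \emph{right Quillen equivalence} from $\V$ to $\U$ in the sense of Definition~\ref{d:left-right-equiv}, and then invoke the ``right morphism'' half of Theorem~\ref{qa}.

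First I would check that the assignment $G \mapsto \left(\Theta^L_G \dashv \Theta^R_G\right)$, with $\Theta^R_G(X) = (W(G)\times X)/G$, is a right morphism from $\V$ to $\U$ over $\GG \dashv \ovl{W}$, i.e.\ a pseudo-natural transformation $\V \circ \ovl{W} \Rightarrow \U$ in the sense of Definition~\ref{d:left-right}. The pseudo-naturality square for a morphism $G \lrar H$ of simplicial groups is precisely the requirement that the commutative square of simplicial sets displayed just before the statement be Cartesian, which holds because the $G$-action on $W(G)$ (resp.\ the $H$-action on $W(H)$) is free in every simplicial degree; the remaining coherences are automatic since everything in sight is built from strict pullbacks. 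Next I would note that each $\Theta^L_G \dashv \Theta^R_G$ is a Quillen adjunction --- this is part of the content of~\cite{DDK} --- so that the right morphism is in fact a right Quillen morphism. Finally, to see it is a right Quillen equivalence one needs $\Theta^L_G \dashv \Theta^R_G$ to be a Quillen equivalence for every \textbf{fibrant} $G \in \sGr$; since simplicial groups are always Kan complexes, every $G$ is fibrant, and this is exactly the statement (again from~\cite{DDK}) that the $\Theta_G$ are Quillen equivalences.

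With these three checks in hand, Theorem~\ref{qa} applied to this right Quillen equivalence immediately yields the Quillen equivalence $\Phi^L \dashv \Phi^R$ between $\int_{X \in \SS_0}\SS_{/\iota(X)}$ and $\int_{G \in \sGr}\SS^{\B G}$ asserted in the statement. The only step with any real content is the pseudo-naturality verification, i.e.\ the Cartesian-square claim; once that is granted the corollary is a formal consequence of Theorem~\ref{qa}, so I do not anticipate a genuine obstacle --- the work has effectively been front-loaded into establishing that $\V$ and $\U$ are proper relative and that the $\Theta_G$'s form a compatible family of Quillen equivalences.
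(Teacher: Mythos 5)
Your proposal is correct and reproduces the paper's argument essentially verbatim: the corollary is obtained by packaging the DDK adjunctions $\Theta^L_G \dashv \Theta^R_G$ into a right Quillen equivalence from $\V$ to $\U$ over $\GG \dashv \ovl{W}$, verifying pseudo-naturality via the Cartesian-square claim (which uses freeness of the $G$-action on $W(G)$), and invoking the right-morphism half of Theorem~\ref{qa}; your observation that all $G \in \sGr$ are fibrant because simplicial groups are Kan complexes correctly justifies the paper's parenthetical remark.
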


\subsection{Modules over associative algebras}\label{ss:alg}

Let $\M$ be a symmetric monoidal model category and let $\Alg(\M)$ be the category of associative algebra objects in $\M$ (i.e. objects equipped with a unital and associative multiplication). We have an adjunction
$$ \xymatrix@=13pt{\M\ar[rr]<1ex>^(0.4){T} && \Alg(\M)\ar[ll]<1ex>_(0.6){\upvdash}^(0.6){U}}$$
where $U$ is the forgetful functor and $T$ is the free algebra functor. For each algebra object $R \in \Alg(\M)$ one can consider the categories $\LMod(R)$ and $\RMod(R)$ of \textbf{left $R$-modules} and \textbf{right $R$-modules} respectively. We have similar adjunctions
$$ \xymatrix@=13pt{\M\ar[rr]<1ex>^(0.4){R \otimes (-)} &&  \LMod(R)\ar[ll]<1ex>_(0.6){\upvdash}^(0.6){U}}$$
and
$$ \xymatrix@=13pt{\M\ar[rr]<1ex>^(0.4){(-) \otimes R} && \RMod(R)\ar[ll]<1ex>_(0.6){\upvdash}^(0.6){U}}$$
where $U$ always denotes the forgetful functor.

\begin{rem}\label{r:unit}
The unit $\II \in \M$ of the monoidal structure carries a canonical algebra structure. With respect to this structure the adjunctions above give isomorphisms $\LMod(\II) \cong \M$ and $\RMod(\II) \cong \M$.
\end{rem}

In~\cite{SS}, Schwede and Shipley establish the existence of model structures on the categories $\Alg(\M), \LMod(R)$ and $\RMod(R)$ under suitable assumptions. The most significant of those assumptions is known as the \textbf{monoid axiom}. In order to phrase this axiom it will be useful to introduce the following terminology.

\begin{define}\label{d:weakly saturated}
Let $\M$ be a model category. A class of morphisms $\U\subseteq \M$ is said to be \textbf{weakly saturated} if it is closed under pushouts, transfinite compositions and retracts. 
\end{define}

Note that an intersection of weakly saturated classes of morphisms is again weakly saturated.

\begin{define}\label{d:U}
Let $\M$ be a symmetric monoidal model category and let $R \in \Alg(\M)$ be an algebra object. We will denote by $\U_R \subseteq \RMod(R)$ the smallest weakly saturated class of morphisms containing all the morphisms of the form
$$ f \otimes M: K \otimes M \lrar L \otimes M $$
where $f: K \lrar L$ is a trivial cofibration in $\M$ and $M \in \RMod(R)$ is a right $R$-module.
\end{define}

We are now ready to formulate the monoid axiom.
\begin{define}[\cite{SS}, Definition 3.3] 
Let $\M$ be a symmetric monoidal model category with unit $\II \in \M$. We will say that $\M$ satisfies the \textbf{monoid exiom} if every morphism in $\U_\II$ is a weak equivalence (where $\II$ is carries its canonical algebra strucute, see Remark~\ref{r:unit}).
\end{define}

The following theorem is one of the main results of~\cite{SS}:
\begin{thm}[\cite{SS}, Theorem 4.1]\label{t:ss}
Let $\M$ be a combinatorial symmetric monoidal model category satisfying the monoid axiom. Then:
\begin{enumerate}
\item
The category $\Alg(\M)$ of associative algebra objects in $\M$ can be endowed with a combinatorial model structure in which a map $f: R \lrar S$ of algebra objects is a weak equivalence (resp. fibration) if and only if $U(f)$ is a weak equivalence (resp. fibration) in $\M$.
\item
Let $R \in \Alg(\M)$ be an algebra object. Then the category $\LMod(R)$ of left $R$-modules can be endowed with a combinatorial model structure in which a map $f: M \lrar N$ of $R$-modules is a weak equivalence (resp. fibration) if and only if $U(f)$ is a weak equivalence (resp. fibration) in $\M$. The analogous statement for $\RMod(R)$ holds as well.

\end{enumerate}
\end{thm}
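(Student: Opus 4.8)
We prove the theorem by exhibiting the required model structures as \emph{transferred} (or lifted) structures along the relevant free--forgetful adjunctions, following the strategy of \cite{SS}. Both $\Alg(\M)$ and, for a fixed $R \in \Alg(\M)$, the categories $\LMod(R)$ and $\RMod(R)$ are categories of algebras over a finitary monad on the locally presentable category $\M$: the monad $X \mapsto \coprod_{n \geq 0} X^{\otimes n}$ in the case of algebras, and $X \mapsto R \otimes X$ (resp.\ $X \mapsto X \otimes R$) in the case of modules. Finitariness holds because $\otimes$ preserves colimits in each variable (the monoidal structure being closed), so all three categories are themselves locally presentable, in particular complete and cocomplete, and the forgetful functor $U$ preserves filtered colimits. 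The plan is to declare a map $f$ in $\Alg(\M)$ (resp.\ $\LMod(R)$, $\RMod(R)$) to be a weak equivalence or a fibration precisely when $U(f)$ is one in $\M$, and to take $F(I)$ and $F(J)$ — the images under the left adjoint $F$ of sets $I,J$ of generating cofibrations and trivial cofibrations of $\M$ — as generating cofibrations and trivial cofibrations.

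By the standard transfer criterion (e.g.\ \cite{SS}, Lemma~2.3), this data defines a cofibrantly generated model structure — in fact a combinatorial one, since $F(I),F(J)$ are sets and the underlying categories are locally presentable, and since $U$ preserving filtered colimits makes the domains of $F(I),F(J)$ small — provided the following \textbf{acyclicity condition} holds: every transfinite composition of pushouts of maps in $F(J)$ is carried by $U$ to a weak equivalence of $\M$. The remainder of the argument consists in verifying this, and it is exactly here that the monoid axiom enters.

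For modules the verification is short. A pushout in $\LMod(R)$ of a map $R \otimes j \colon R \otimes K \lrar R \otimes L$ with $j \in J$, along an arbitrary map $R \otimes K \lrar M$, is sent by $U$ to a pushout in $\M$ of $\Id_R \otimes j$, which by symmetry is isomorphic to $j \otimes R$ and hence lies in the weakly saturated class $\U_{\II} \subseteq \RMod(\II) \cong \M$ of Definition~\ref{d:U}. Since $\U_{\II}$ is closed under pushouts and transfinite compositions, every relative $F(J)$-cell complex in $\LMod(R)$ is carried by $U$ into $\U_{\II}$, and the monoid axiom then guarantees such a map is a weak equivalence. The argument for $\RMod(R)$ is identical, and an entirely analogous (in fact easier) reasoning handles the acyclicity condition needed for $\Alg(\M)$ once the pushout of a free map has been analysed.

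The analysis of a free extension of algebras is the crux. Given a pushout $R \lrar R' = R \coprod_{T(K)} T(L)$ of a free map $T(j)$ along $T(K) \lrar R$, one must show that the underlying map $U(R) \lrar U(R')$ is a transfinite composition of maps $Q_n \lrar Q_{n+1}$, each a pushout in $\M$ of a map built from $j$ by iterated pushout--products and tensoring with copies of $R$; every such map again lies in $\U_{\II}$, so the composite lies in $\U_{\II}$ and is a weak equivalence by the monoid axiom. Combined with the fact that $F \dashv U$ is then automatically a Quillen adjunction — cofibrations and trivial cofibrations being generated by $F(I),F(J)$ — this yields all three model structures together with the asserted descriptions of their weak equivalences and fibrations. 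The main obstacle is precisely this algebra computation: setting up the filtration of the pushout and checking that each layer is a pushout of a map in $\U_{\II}$ requires a careful bookkeeping of the words in $L^{\otimes n}$ containing at least one letter from $K$ — in effect an $n$-fold pushout--product construction — and this is the only genuinely technical point.
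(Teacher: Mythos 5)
The paper does not prove this statement itself; it is quoted directly from Schwede--Shipley~\cite{SS} (their Theorem~4.1, with the hypothesis upgraded to ``combinatorial''). Your sketch correctly outlines the argument given in that reference: transfer the model structure along the free--forgetful adjunction (using that the free-algebra and free-module monads are finitary, so $\Alg(\M)$, $\LMod(R)$, $\RMod(R)$ are locally presentable and the transferred structures are combinatorial), reduce the acyclicity condition to showing that relative $F(J)$-cell complexes have underlying weak equivalences, handle modules immediately from the monoid axiom, and handle free algebra extensions via the word-length filtration (Lemma~6.2 of~\cite{SS}). One point worth stating explicitly in the module case is that $U\colon\LMod(R)\to\M$ preserves pushouts and transfinite compositions because the monad $R\otimes(-)$ preserves all colimits (closedness of the monoidal structure), so the underlying map of a relative $F(J)$-cell complex is genuinely a relative $\U_\II$-cell complex --- your sketch uses this silently when passing from the pushout in $\LMod(R)$ to the pushout in $\M$. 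With that caveat, the proposal is an accurate outline of the cited proof.
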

Now any map of algebras $f: R \lrar S$ induces Quillen a adjunction
$$ \xymatrix@=13pt{
\LMod(R)\ar[rr]<1ex>^(0.5){S \otimes_R (-)} && \LMod(S)\ar[ll]<1ex>_(0.5){\upvdash}^(0.5){\res^S_R}
}$$
where $S \otimes_R M$ is given by the coequlizer of
$$ \xymatrix{
S \otimes R \otimes M \ar[r]<0.5ex>\ar[r]<-0.5ex> & S \otimes M \\
}$$
and $\res^S_R$ is the functor which restricts the action from $S$ to $R$. Similarly, there is an analogous Quillen adjunction for right modules. By associating the above adjunction to any such $f: R \lrar S$ one may endow the associations $R \mapsto \LMod(R)$ and $R \mapsto \RMod(R)$ with structures of \textbf{functors} $\Alg(\M) \lrar \ModCat$. 

\begin{rem}
Note that the restriction functors $\res^S_R$ always preserve and reflect weak equivalences.
\end{rem}

In~\cite{SS} the authors introduce an additional assumption on $\M$ which is important for our purposes. We will formulate their assumption using the following notation.
\begin{define}
Let $\M$ be a monoidal model category and let $R$ be an algebra object. We will say that a left $R$-module is \textbf{flat} if the operation $(-) \otimes_R N$ takes weak equivalences of right $R$-modules to weak equivalences in $\M$. Similarly, we will say that a right $R$-module is flat if the operation $M \otimes_R (-)$ takes weak equivalences of left $R$-modules to weak equivalences in $\M$.
\end{define}

\begin{define}\label{d:SS-assume}
Let $\M$ be a combinatorial symmetric monoidal model category. We will say that $\M$ satisfies the \textbf{Schwede-Shipley assumptions} if:
\begin{enumerate}
\item
$\M$ satisfies the monoid axiom.
\item
Every cofibrant (left or right) $R$-module in $\M$ is flat.
\end{enumerate}
\end{define}

\begin{example}\label{e:many}
All the model categories described in \S 5 of~\cite{SS} satisfy Schwede-Shipley assumptions, except for the category of $S$-modules, which is not combinatorial. This includes, for example, the models of symmetric and orthogonal spectra (with the stable model structure).
\end{example}

We then have the following reformulation of a theorem of~\cite{SS}:
\begin{thm}[\cite{SS}, Theorem 4.3]
Let $\M$ be a combinatorial symmetric monoidal model category which satisfies the Schwede-Shipley assumptions. Then the functors $R \mapsto \LMod(R)$ and $R \mapsto \RMod(R)$ are \textbf{relative}.
\end{thm}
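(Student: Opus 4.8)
The plan is to apply the standard recognition criterion for Quillen equivalences, taking advantage of the fact that the restriction functors are homotopically very well-behaved. Fix a weak equivalence $f: R \lrar S$ in $\Alg(\M)$; we must show that the Quillen adjunction $S \otimes_R (-) \dashv \res^S_R$ is a Quillen equivalence, and likewise for right modules. Since $\res^S_R$ preserves and reflects all weak equivalences, it in particular reflects weak equivalences between fibrant objects, so it suffices to check that the derived unit is a weak equivalence on cofibrant objects; and since $\res^S_R$ preserves weak equivalences, the derived unit at a cofibrant left $R$-module $M$ is a weak equivalence if and only if the ordinary unit $M \lrar \res^S_R\left(S \otimes_R M\right)$ is, equivalently (weak equivalences of $R$-modules being detected in $\M$ by Theorem~\ref{t:ss}) if and only if the underlying map $M \lrar S \otimes_R M$ is a weak equivalence in $\M$.

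Next I would identify this underlying map. Viewing $f$ as a morphism of right $R$-modules $f: R \lrar \res^S_R S$ (restricting the right action of $S$ on itself along $f$), the map $M \lrar S \otimes_R M$ is, under the canonical isomorphism $R \otimes_R M \cong M$, precisely $f \otimes_R M: R \otimes_R M \lrar S \otimes_R M$. Now $f$ is a weak equivalence of algebras, so $U(f)$ is a weak equivalence in $\M$, hence $f$ is a weak equivalence of right $R$-modules. By the second Schwede--Shipley assumption the cofibrant left $R$-module $M$ is flat, i.e. the functor $(-) \otimes_R M: \RMod(R) \lrar \M$ carries weak equivalences to weak equivalences. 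Applying this functor to $f$ shows that $f \otimes_R M$, hence $M \lrar S \otimes_R M$, is a weak equivalence in $\M$. This proves that $S \otimes_R (-) \dashv \res^S_R$ is a Quillen equivalence; the argument for $\RMod$ is symmetric, using the flatness of cofibrant right $R$-modules instead.

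The only point requiring care is the bookkeeping of left versus right module structures: to form $S \otimes_R M$ one must regard $S$ as a right $R$-module, and the relevant flatness statement is that tensoring with the cofibrant \emph{left} module $M$ preserves weak equivalences of \emph{right} modules --- which is exactly why Definition~\ref{d:SS-assume} demands flatness of cofibrant modules on both sides. Beyond this no real obstacle arises; combinatoriality is used only to invoke Theorem~\ref{t:ss} for the existence of the model structures involved, and the monoid axiom enters only through that theorem.
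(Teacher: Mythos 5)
The paper does not give its own proof of this statement; it simply cites Theorem 4.3 of Schwede--Shipley. Your argument is correct and is in fact the standard proof: since $\res^S_R$ reflects all weak equivalences (because they are created in $\M$ by the underlying-object functor), the adjunction is a Quillen equivalence as soon as the derived unit is a weak equivalence on cofibrant $M$, and since $\res^S_R$ also preserves weak equivalences this reduces to the ordinary unit $M \lrar \res^S_R(S\otimes_R M)$, whose underlying map in $\M$ is $f\otimes_R M$; flatness of the cofibrant left $R$-module $M$ (the second Schwede--Shipley assumption) applied to the weak equivalence $f\colon R\lrar \res^S_R S$ of right $R$-modules finishes the argument, and the $\RMod$ case is symmetric. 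Your left/right bookkeeping is also correct.
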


Our goal now is to prove that if $\M$ satisfies the Schwede-Shipley assumptions then the functors $R \mapsto \LMod(R)$ and $R \mapsto \RMod(R)$ are also \textbf{proper}. For this we will need the following notion:
\begin{define}\label{d:flat}
Let $R \in \Alg(\M)$ be an algebra object. We will say that a map $M \lrar N$ of right (resp. left) $R$-modules is a \textbf{flat equivalence} if for every left (resp. right) $R$-module $O$ the induced map
$$ M \otimes_R O \lrar N \otimes_R O \;\;\;\; (\text{resp.}\; O \otimes_R M \lrar O \otimes_R N) $$
is a weak equivalence in $\M$.
\end{define}

\begin{rem}
Taking $O = R$ in Definition~\ref{d:flat} we see that every flat equivalence is a weak equivalence.
\end{rem}

\begin{notn}
Given a map of algebras $R \lrar S$, the object $S$ inherits canonical structures of
both a right and a left $R$-module. We will denote the resulting right $R$-module by $S_{/R}$ and the resulting left $R$-module by $S_{R/}$.
\end{notn}

\begin{thm}\label{t:rmod}
Let $\M$ be a combinatorial symmetric monoidal model category which satisfies the Schwede-Shipley assumptions. Then for any trivial cofibration $R \lrar S$ in $\Alg(\M)$ the induced maps $h_{/R}: R_{/R} \lrar S_{/R}$ and $h_{R/}: R_{R/} \lrar S_{R/}$ are flat equivalences (of right and left $R$-modules respectively).
\end{thm}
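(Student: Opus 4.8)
The plan is to prove the statement for $h_{/R}: R_{/R} \lrar S_{/R}$ (the argument for $h_{R/}$ being completely symmetric), by first reducing to the cofibrant case and then exploiting the Schwede--Shipley assumptions. The key observation is that for any left $R$-module $O$, there is a canonical isomorphism $S_{/R} \otimes_R O \cong S \otimes_R O$, and more importantly that the map $R_{/R}\otimes_R O \lrar S_{/R}\otimes_R O$ can be identified with the unit-type map $O \lrar S\otimes_R O$ classifying extension of scalars along $R\lrar S$. So what must be shown is that for every left $R$-module $O$, the map $O \lrar S\otimes_R O$ is a weak equivalence in $\M$.

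First I would reduce to the case where $O$ is cofibrant. Given an arbitrary left $R$-module $O$, choose a cofibrant replacement $O^{\cof} \x{\sim}{\lrar} O$ in $\LMod(R)$. By the second Schwede--Shipley assumption (every cofibrant $R$-module is flat), and by the fact that $R\lrar S$ is in particular a trivial cofibration of algebras so $S$ and $S_{/R}$ are well-behaved, one gets a commutative square comparing $O^{\cof}\lrar S\otimes_R O^{\cof}$ with $O \lrar S\otimes_R O$; flatness of $O^{\cof}$ together with the monoid-type arguments show the vertical comparison maps are weak equivalences, so it suffices to treat cofibrant $O$.

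For cofibrant $O$, I would argue that $O$ is a retract of a transfinite composition of pushouts of maps of the form $R\otimes(f): R\otimes K \lrar R\otimes L$ with $f: K\lrar L$ a (generating) cofibration in $\M$. Since the functor $S\otimes_R(-)$ is a left adjoint it commutes with all colimits, and since the class of weak equivalences we are trying to establish is closed under the relevant colimit operations (here one needs the monoid axiom and the combinatoriality to control transfinite compositions and pushouts, essentially the same bookkeeping as in the proof of Theorem~\ref{t:ss}), it reduces to checking that $R\otimes K \lrar S\otimes K$ is a weak equivalence for $K$ a (co)domain of a generating cofibration, i.e. for $K$ cofibrant in $\M$. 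But $R\otimes K \lrar S\otimes K$ is just $(R\lrar S)\otimes K$, and since $R\lrar S$ is a weak equivalence in $\M$ (being a trivial cofibration of algebras, hence a weak equivalence on underlying objects) and $(-)\otimes K$ preserves weak equivalences between cofibrant objects when $K$ is cofibrant (again using that cofibrant objects are flat, or the pushout-product/monoid axiom), this map is a weak equivalence. Tracing back through the reductions yields that $O\lrar S\otimes_R O$ is a weak equivalence for all $O$, which is exactly the assertion that $h_{/R}$ is a flat equivalence.

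I expect the main obstacle to be the second reduction step: carefully showing that the class of maps $\{ g \mid S\otimes_R g \text{ is a weak equivalence}\}$, or rather the property ``$O \mapsto (O\lrar S\otimes_R O$ is a weak equivalence)$\,$'', propagates correctly through pushouts and transfinite compositions of free maps. This is where the monoid axiom and combinatoriality genuinely enter, and it is the same technical heart as in Schwede--Shipley's original arguments; the delicate point is that one is not merely preserving weak equivalences but comparing a module with its base change, so one must set up the induction so that the comparison maps assemble compatibly along the cell-attachment filtration. Once that is in place, the rest is formal manipulation of adjunctions and the flatness hypothesis.
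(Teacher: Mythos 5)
Your reformulation is correct: showing $h_{/R}$ is a flat equivalence amounts to showing that $O \lrar S \otimes_R O$ is a weak equivalence in $\M$ for \emph{every} left $R$-module $O$. Note, however, that for cofibrant $O$ this is already immediate from the Schwede--Shipley flatness hypothesis applied to the weak equivalence $R_{/R} \lrar S_{/R}$ of right $R$-modules (it is a weak equivalence because $R \lrar S$ is a weak equivalence on underlying objects), so the cell-attachment filtration of $O$ you outline is attacking the part of the statement that is already easy. The genuine content is descent to non-cofibrant $O$, and here your argument is circular. In your comparison square the left vertical is $O^{\cof} \lrar O$ and the right vertical is $S \otimes_R O^{\cof} \lrar S \otimes_R O$; two-out-of-three would conclude if the latter were a weak equivalence, i.e.\ if $S_{/R}$ were a flat right $R$-module. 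But flatness of $O^{\cof}$ controls maps of the form $M \otimes_R O^{\cof} \lrar N \otimes_R O^{\cof}$, not maps that vary $O$ with $S$ fixed, so it does not apply — and flatness of $S_{/R}$ is essentially what is being proven.

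There is also a structural mismatch with the monoid axiom. Your filtration of $O$ reduces, via tensoring with $S \otimes_R (-)$, to showing that $(R \lrar S) \otimes K$ is a weak equivalence for $K$ a (co)domain of a generating cofibration. But $R$, $S$ are not assumed cofibrant in $\M$, the cells $K$, $L$ need not be cofibrant, and $R \lrar S$ is merely a weak equivalence on underlying objects, not a trivial cofibration of $\M$; the monoid axiom only controls maps of the form (trivial cofibration of $\M$) $\otimes$ (arbitrary object), which a cell filtration of $O$ never produces. The paper instead filters the map $R \lrar S$ itself: reducing to the case where $R \lrar S$ is a pushout of $T(K) \lrar T(L)$ for a trivial cofibration $f \colon K \lrar L$, it lifts the Schwede--Shipley tower $R = P_0 \lrar P_1 \lrar \cdots$ with $\colim P_n \cong S$ to a tower of $R$-modules, in which each step $P_{n-1} \lrar P_n$ is a pushout of $\bar{Q}_n \otimes M \lrar L^{\otimes n} \otimes M$ for $\bar{Q}_n \lrar L^{\otimes n}$ an iterated pushout-product of $f$ (hence a trivial cofibration of $\M$) and $M$ an arbitrary module (Lemma~\ref{l:technical}). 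This places $h_{/R}$ in the weakly saturated class $\U_R$, and tensoring with any $O$ then lands in $\U_\II$, where the monoid axiom applies with no cofibrancy hypotheses on $O$, $R$, $S$ or the cells (Lemma~\ref{l:flat-equiv}). The trivial-cofibration slot is occupied by genuine trivial cofibrations of $\M$ while the arbitrary-object slot absorbs all the non-cofibrant pieces — precisely the alignment with the monoid axiom that your filtration lacks.
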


Before proceeding to prove Theorem~\ref{t:rmod} let us explain how it implies that the functors $\LMod(-)$ and $\RMod(-)$ are proper.

\begin{thm}\label{t:rel-prop}
Let $\M$ be a combinatorial symmetric monoidal model category model category which satisfies the Schwede-Shipley assumptions (see Definition~\ref{d:SS-assume}). Then the functors $\LMod(-),\RMod(-): \M \lrar \ModCat$ are proper.
\end{thm}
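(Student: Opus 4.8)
The plan is to treat the two halves of properness separately. Right properness will be essentially automatic: if $f\colon R \lrar S$ is a trivial fibration in $\Alg(\M)$, then the associated right Quillen functor on modules is the restriction functor $\res^S_R$, and since weak equivalences of (left or right) modules are created in $\M$ by Theorem~\ref{t:ss}, $\res^S_R$ preserves weak equivalences --- in fact it does so for \emph{every} map of algebras, not merely for trivial fibrations. So $\LMod(-)$ and $\RMod(-)$ are right proper with no hypothesis at all, and the content of the theorem is left properness.

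For left properness, I would argue as follows. Fix a trivial cofibration $f\colon R \lrar S$ in $\Alg(\M)$. In the left-module case the underlying object in $\M$ of $f_{!}(M)$ is $S_{/R}\otimes_R M$, so it is enough to show that the right $R$-module $S_{/R}$ is flat, i.e. that $S_{/R}\otimes_R(-)$ carries weak equivalences of left $R$-modules to weak equivalences; dually, for right modules one needs the left $R$-module $S_{R/}$ to be flat. The strategy is to bootstrap from the trivial observation that $R$, regarded as a module over itself, is flat --- since $R\otimes_R(-)$ is naturally the identity functor --- by means of the following elementary lemma, which I would isolate and prove first: if $\vphi\colon M\lrar M'$ is a flat equivalence of right $R$-modules (Definition~\ref{d:flat}) and $M$ is flat, then $M'$ is flat. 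To see this, let $g\colon O\lrar O'$ be a weak equivalence of left $R$-modules and consider the commuting square
$$\xymatrix{
M\otimes_R O \ar[r]^{M\otimes_R g}\ar[d]_{\vphi\otimes_R O} & M\otimes_R O' \ar[d]^{\vphi\otimes_R O'} \\
M'\otimes_R O \ar[r]_{M'\otimes_R g} & M'\otimes_R O'.
}$$
The vertical maps are weak equivalences because $\vphi$ is a flat equivalence, and the top map is a weak equivalence because $M$ is flat; two-out-of-three then forces $M'\otimes_R g$ to be a weak equivalence, so $M'$ is flat. The statement for left $R$-modules is symmetric.

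Granting the lemma, the theorem follows at once. Theorem~\ref{t:rmod} asserts that for a trivial cofibration $R\lrar S$ the map $h_{/R}\colon R_{/R}\lrar S_{/R}$ is a flat equivalence of right $R$-modules, and since $R_{/R}=R$ is flat the lemma gives that $S_{/R}$ is flat; hence $f_{!}$ preserves weak equivalences of left $R$-modules, and $\LMod(-)$ is left proper. Applying the symmetric lemma to the flat equivalence $h_{R/}\colon R_{R/}\lrar S_{R/}$ of left $R$-modules shows that $S_{R/}$ is flat and establishes left properness of $\RMod(-)$. Together with right properness this proves the theorem. The only genuinely hard input is Theorem~\ref{t:rmod} itself --- the assertion that a trivial cofibration of algebras induces not merely a weak equivalence but a \emph{flat} equivalence on underlying modules --- which is proved separately using the Schwede--Shipley assumptions; everything in the present argument is a formal consequence of it.
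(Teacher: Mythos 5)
Your proof is correct and takes essentially the same approach as the paper: right properness is trivial because restriction functors preserve weak equivalences, and left properness follows from the flat equivalence $R_{/R}\lrar S_{/R}$ of Theorem~\ref{t:rmod} via a two-out-of-three argument. Your lemma that flat equivalences transport flatness packages the same naturality-square computation the paper applies directly to the unit maps $M\lrar\res^S_R\left(S\otimes_R M\right)$.
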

\begin{proof}
We will prove the claim for $\LMod(-)$. The proof for $\RMod(-)$ is completely analogous and will be omitted. 
Since all restriction functors preserve weak equivalences we can focus attention on the left Quillen functors. Let $f: R \lrar S$ be a trivial cofibration of algebras. According to Theorem~\ref{t:rmod} the map $R_{/R} \lrar S_{/R}$ is flat equivalence of right $R$-modules. This implies that for every left $R$-module $M$ the unit map
$$ M \lrar \res^S_R\left(S \otimes_R M\right) $$
is a weak equivalence. Since $\res^S_R$ reflects weak equivalences this implies that $S_{/R} \otimes_R (-)$ preserves weak equivalences.
\end{proof}

\begin{cor}
Let $\M$ be a combinatorial symmetric monoidal model category which satisfies the Schwede-Shipley assumptions. Then there exists a model structure on
$$ \int_{R \in \Alg(\M)} \LMod(R) $$
such that the projection
$$ \int_{R \in \Alg(\M)} \LMod(R) \lrar \Alg(\M) $$
is a model fibration. The analogous statement for right modules holds as well.
\end{cor}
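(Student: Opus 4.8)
The plan is to identify $\LMod(-)$ (and likewise $\RMod(-)$) as a \emph{proper relative functor} defined on the model category $\Alg(\M)$, and then feed it into the two structural theorems of the paper. Concretely, I would assemble the following facts. First, since $\M$ is combinatorial and satisfies the monoid axiom, Theorem~\ref{t:ss}(1) endows $\Alg(\M)$ with a combinatorial model structure; in particular $\Alg(\M)$ is a model category, hence a pre-model category in the sense of \S\ref{s:model-fib}. Second, by Theorem~\ref{t:ss}(2) each fiber $\LMod(R)$ is a model category, and sending a map of algebras $f\colon R\lrar S$ to the Quillen adjunction $S\otimes_R(-)\dashv\res^S_R$ makes $\LMod(-)\colon\Alg(\M)\lrar\ModCat$ into a functor, as explained in the paragraphs preceding the statement. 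Third, the cited reformulation of \cite{SS} (Theorem~4.3) says this functor is relative, and Theorem~\ref{t:rel-prop} says it is proper; hence $\LMod(-)$ is a proper relative functor.

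With these in hand the proof is a matter of invoking the machinery already developed. Applying Theorem~\ref{model structure} to the model category $\Alg(\M)$ and the proper relative functor $\LMod(-)$ produces the integral model structure on $\int_{R\in\Alg(\M)}\LMod(R)$. Applying Theorem~\ref{t:mod-fib} with the ambient pre-model category taken to be $\Alg(\M)$ and $\F$ taken to be $\LMod(-)$ then shows that the projection
$$ \int_{R\in\Alg(\M)}\LMod(R)\lrar\Alg(\M) $$
is a model fibration. (Alternatively one could invoke only Theorem~\ref{t:mod-fib} together with the corollary that a relative model category over a model category is a model category, which also recovers the model structure itself.) The statement for right modules is obtained by the same argument verbatim, using the right-module halves of Theorem~\ref{t:ss}, of \cite{SS} Theorem~4.3, and of Theorem~\ref{t:rel-prop}.

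I do not expect any obstacle inside the Corollary itself: all the content has been packaged into Theorem~\ref{model structure}, Theorem~\ref{t:mod-fib}, and the properness and relativeness of $\LMod(-)$. The genuine difficulty is upstream, in Theorem~\ref{t:rmod} --- the assertion that for a trivial cofibration $R\lrar S$ of algebras the maps $R_{/R}\lrar S_{/R}$ and $R_{R/}\lrar S_{R/}$ are flat equivalences --- since that is exactly what powers properness via Theorem~\ref{t:rel-prop}. There the plan would be to present a generic trivial cofibration of algebras as a transfinite composition of pushouts of free-algebra maps $T(i)$, and to check, using condition (2) of the Schwede-Shipley assumptions (cofibrant modules are flat) together with the monoid axiom, that tensoring such a map over $R$ with an arbitrary $R$-module remains a weak equivalence in $\M$; the standard tool is the filtration of a pushout $S = R\amalg_{T(K)}T(L)$ by word length in $L$.
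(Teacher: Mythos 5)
Your proposal matches the paper's own route exactly: the corollary is indeed just the conjunction of Theorem~\ref{t:ss} (model structures on $\Alg(\M)$ and on the fibers), the reformulated Theorem 4.3 of~\cite{SS} (relativeness), Theorem~\ref{t:rel-prop} (properness), and then the general machinery of Theorem~\ref{model structure} and Theorem~\ref{t:mod-fib}. Your closing remark correctly locates the real work in Theorem~\ref{t:rmod}, and your sketch of it --- filtering the pushout $S=R\amalg_{T(K)}T(L)$ by word length and applying the monoid axiom and flatness of cofibrant modules --- is precisely the strategy of the paper's Lemma~\ref{l:technical}.
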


\begin{rem}
The category $\int_{R \in \M} \LMod(R)$ can also be described as the category of $\Alg_{\O}(\M)$ of algebras over a certain coloured operad $\O$ (see~\cite[1.5]{BM}). Furthermore, the integral model structure on $\Alg_{\O}(\M)$ is the one transferred from $\M$ along the forgetful functor $\Alg_{\O}(\M) \lrar \M$. To see this, we first observe that the fibrations are defined in the same way. To see that the weak equivalences coincide, let us revoke Observation~\ref{o:sym} which says, in this case, that a map $(f,\vphi):(R,M) \lrar (S,N)$ in $\int_{R \in \M} \LMod(R)$ is a weak equivalence if and only if $f: R \lrar S$ is a weak equivalence in $\Alg(\M)$ and the composite $M \x{\vphi^{\ad}}{\lrar} \res^S_R(N) \lrar \res^S_R(N^{\fib})$ is a weak equivalence in $\LMod(R)$. Since $\res^S_R$ preserves weak equivalences this is the same as saying that $\vphi^{\ad}$ itself is a weak equivalence. But this is exactly the definition of weak equivalences in the transferred model structure.

To the best of the authors' knowledge, in the case where $\M$ is a model for spectra, the transferred model structure was only known to exist for the positive model structure on symmetric spectra (see~\cite{EM}) and the positive model structure on orthogonal spectra (see~\cite{Kro}). Theorem~\ref{t:rel-prop}, on the other hand, provides a criterion for the existence of the transferred model structure on $\Alg_{\O}(\M)$ which applies to most models of spectra (as those appearing in Example~\ref{e:many}).
\end{rem}

The rest of this subsection is devoted to the the proof of Theorem~\ref{t:rmod}. We will focus on proving the claim for $h_{/R}$. The proof for $h_{R/}$ is completely analogous and will be omitted.
%

The proof of Theorem~\ref{t:rmod} will be achieved through the following lemmas:
\begin{lem}\label{l:flat-equiv}
Let $\M$ be a symmetric monoidal model category satisfying the monoid axiom and let $R$ be an algebra object in $M$. Then every morphism in $\U_R$ (see Definition~\ref{d:U}) is a flat equivalence.
\end{lem}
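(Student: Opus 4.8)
The plan is to exploit the fact that the class of flat equivalences in $\RMod(R)$ is itself weakly saturated, together with the minimality of $\U_R$ among weakly saturated classes containing the generating maps $f \otimes M$. First I would verify the claim on generators: if $f: K \lrar L$ is a trivial cofibration in $\M$ and $M \in \RMod(R)$, then $f \otimes M: K \otimes M \lrar L \otimes M$ is a flat equivalence. Indeed, for any left $R$-module $O$ we have a natural isomorphism $(K \otimes M) \otimes_R O \cong K \otimes (M \otimes_R O)$ and similarly for $L$, so the induced map is $f \otimes (M \otimes_R O): K \otimes (M \otimes_R O) \lrar L \otimes (M \otimes_R O)$. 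This is a map of the form ``trivial cofibration of $\M$ tensored with an object of $\M$'', hence lies in $\U_{\II}$ (viewing $M\otimes_R O$ as a module over the unit, cf. Remark~\ref{r:unit}), and is therefore a weak equivalence by the monoid axiom. Thus every generator of $\U_R$ is a flat equivalence.

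Next I would show that the class $\F\ell$ of flat equivalences in $\RMod(R)$ is weakly saturated, i.e. closed under pushouts, transfinite compositions and retracts. The point is that for each fixed left $R$-module $O$, the functor $(-) \otimes_R O: \RMod(R) \lrar \M$ is a left adjoint, hence preserves pushouts and transfinite compositions, and trivially preserves retracts. Since the class $\W_\M$ of weak equivalences of $\M$ is closed under retracts (axiom MC3) and under pushouts and transfinite compositions along the image of the monoid axiom — more precisely, one uses that $\M$ is combinatorial, so weak equivalences are closed under transfinite composition, and that pushouts of weak equivalences that arise here can be controlled — one concludes that $(-)\otimes_R O$ reflects membership in $\F\ell$ through these colimit operations. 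Intersecting over all $O$ (an intersection of weakly saturated classes is weakly saturated, as noted after Definition~\ref{d:weakly saturated}) shows $\F\ell$ is weakly saturated.

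Finally, since $\U_R$ is by Definition~\ref{d:U} the smallest weakly saturated class containing the generators, and since $\F\ell$ is a weakly saturated class containing those generators by the first step, we conclude $\U_R \subseteq \F\ell$, which is precisely the statement. The main obstacle I anticipate is the second step: verifying that flat equivalences really are closed under pushouts and transfinite compositions. Closure under transfinite composition is fine because $\M$ is combinatorial (hence its weak equivalences are closed under filtered colimits, in particular transfinite composition of cofibrations). Closure under pushouts is the delicate point — a pushout of a weak equivalence need not be a weak equivalence in general — so one must use that the pushouts occurring inside a weakly saturated class generated by the $f\otimes M$ are pushouts along cofibrations (the $f\otimes M$ with $f$ a trivial cofibration are themselves cofibrations, being tensor products of cofibrations with arbitrary objects in a monoidal model category), and invoke left properness or the explicit monoid-axiom form. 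The cleanest route is to observe that $\U_R$ is generated by maps that are simultaneously cofibrations and (by step one) flat equivalences, hence ``flat trivial cofibrations'', and that the weakly saturated closure of any set of trivial cofibrations consists of weak equivalences once one knows $(-)\otimes_R O$ sends these generators into $\U_{\II}$ and applies the monoid axiom degreewise; this sidesteps having to prove a general pushout-stability statement for flat equivalences and instead transports the whole weakly saturated closure through the tensor functor into $\U_{\II}$, where the monoid axiom finishes the argument.
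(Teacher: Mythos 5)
Your final observation is exactly the paper's argument, but you arrive there only after a detour that contains real errors, so the proof as organized would not survive scrutiny. The correct framing, which the paper uses and which you describe as the ``cleanest route,'' is this: for each fixed $O \in \LMod(R)$, the functor $(-) \otimes_R O : \RMod(R) \lrar \M$ preserves pushouts, transfinite compositions and retracts (it is a left adjoint), so the preimage $\bigl((-)\otimes_R O\bigr)^{-1}(\U_\II)$ is weakly saturated; by your Step~1 it contains the generators $f\otimes M$ of $\U_R$, hence $\U_R \subseteq \bigl((-)\otimes_R O\bigr)^{-1}(\U_\II)$ by minimality; the monoid axiom says $\U_\II \subseteq \W_\M$, and since $O$ was arbitrary every map in $\U_R$ is a flat equivalence. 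That is the whole proof, and you should lead with it rather than append it as an aside.

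The middle section, however, is not merely a longer route but a flawed one. You assert that the class $\mathcal{F}\ell$ of flat equivalences is weakly saturated, but this is exactly what you cannot show: $\mathcal{F}\ell = \bigcap_O \bigl((-)\otimes_R O\bigr)^{-1}(\W_\M)$, and $\W_\M$ is \emph{not} weakly saturated (pushouts of weak equivalences are not weak equivalences). The monoid axiom does not say $\W_\M$ is closed under pushouts or transfinite composition; it says the specific class $\U_\II$ is contained in $\W_\M$, which is why one must transport into $\U_\II$ rather than into $\W_\M$. The invocation of combinatoriality is also unnecessary (the lemma's hypotheses do not include it) and does not repair this. Finally, the parenthetical claim that $f\otimes M$ is a cofibration because it is a ``tensor product of a cofibration with an arbitrary object'' is false: $f\otimes M = f \Box (\emptyset\to M)$ is a cofibration only when $M$ is cofibrant (or under extra flatness hypotheses), so the generators of $\U_R$ need not be cofibrations at all. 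Fortunately none of this is needed once you adopt the preimage-of-$\U_\II$ framing above.
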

\begin{proof}
Let $O \in \LMod(R)$ be a left $R$-module. Since the functor $(-) \otimes_R O: \RMod(R) \lrar \M$ preserves pushouts, transfinite compositions and retracts it follows that the class of morphisms
$\U_R \otimes_R O$ is contained in the class $U_\II$ (where $\II \in \M$ is the unit object). According the monoid axiom every morphism in $U_\II$ is a weak equivalence. By definition it follows that every morphism in $\U_R$ is a flat equivalence.
\end{proof}

\begin{lem}\label{l:technical}
Let
$$ \xymatrix{
R \ar^{h}[rr] && S \\
& A \ar[ur]\ar[ul] & \\
}$$
be a commutative diagram in $\Alg(\M)$ such that $h$ is a trivial cofibration of algebras. Then the induced map
$$ h_{/A}:R_{/A} \lrar S_{/A} $$
belongs to $U_A$.
\end{lem}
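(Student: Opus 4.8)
The plan is to use the weak saturation of $\U_A$ to reduce to a single free cell attachment, and then to analyze that case via the standard cell-attachment filtration for associative algebras.

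First I would record two elementary facts. (a) The forgetful functor $U\colon \Alg(\M)\lrar \M$ creates filtered colimits; consequently the functor $\Alg(\M)_{A/}\lrar \RMod(A)$, $B\mapsto B_{/A}$, preserves transfinite compositions (and obviously retracts). (b) Since $h\colon R\lrar S$ is a trivial cofibration in $\Alg(\M)$, and this category carries the transferred model structure of Theorem~\ref{t:ss} (whose generating trivial cofibrations may be taken to be the maps $T(j)$ for $j$ a generating trivial cofibration of $\M$), the small object argument together with the retract argument exhibits $h$ as a retract, in the arrow category of $\Alg(\M)_{A/}$, of a relative cell complex $R=B_0\lrar B_1\lrar\cdots\lrar B_\lambda$ in which each $B_\alpha\lrar B_{\alpha+1}$ is a pushout of a map $T(j_\alpha)\colon T(K_\alpha)\lrar T(L_\alpha)$ with $j_\alpha$ a trivial cofibration of $\M$. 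Applying $(-)_{/A}$ and invoking (a) together with the fact that $\U_A$ is weakly saturated, it suffices to prove the following: for every pushout square of algebras
$$\xymatrix{
T(K)\ar[r]\ar[d]_{T(j)} & B\ar[d] \\
T(L)\ar[r] & B' \\
}$$
equipped with a map $A\lrar B$ and with $j\colon K\lrar L$ a trivial cofibration of $\M$, the induced map $B_{/A}\lrar B'_{/A}$ lies in $\U_A$.

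For the single cell I would invoke the classical filtration of $B'$ over $B$ attached to such a pushout (see, e.g., \cite[\S6]{SS}): the underlying map $B\lrar B'$ in $\M$ is the transfinite composition of a tower $B=Z_0\lrar Z_1\lrar Z_2\lrar\cdots$ in which each $Z_{n-1}\lrar Z_n$ is a pushout, along a map not involving the new cell, of a map built from the $n$-fold pushout-product $j^{\square n}$ of $j$ with itself by tensoring (on both sides and in between the cells) with copies of $B$. The key point is that this tower consists of right $A$-submodules of $B'$: the right $A$-action, induced by $A\lrar B\lrar B'$, preserves each $Z_n$ and acts through the rightmost tensor factor. Using the symmetry of the monoidal product to consolidate all auxiliary copies of $B$ into a single tensor factor, the transition map $Z_{n-1}\lrar Z_n$ becomes, as a map of right $A$-modules, a pushout of a map of the form $j^{\square n}\otimes W_n$, where $W_n$ is a tensor power of $B$ regarded as a right $A$-module (via $A\lrar B$ on one factor). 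Since pushout-products of trivial cofibrations with cofibrations are again trivial cofibrations, each $j^{\square n}$ is a trivial cofibration of $\M$, so $j^{\square n}\otimes W_n$ is one of the generators of $\U_A$ in Definition~\ref{d:U} — note that $W_n$ is not required to be cofibrant. Hence each $Z_{n-1}\lrar Z_n$, and therefore $B_{/A}\lrar B'_{/A}$, lies in $\U_A$.

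I expect the main obstacle to be the bookkeeping in the cell-attachment filtration: one must check that the transition maps are pushouts of maps of exactly the stated shape, that the filtration is by right $A$-submodules with the $A$-action concentrated in one tensor factor, and that the symmetry isomorphisms really do package everything as a trivial cofibration of $\M$ tensored with a right $A$-module. (The associative case is genuinely easier than the commutative one addressed in \S\ref{ss:calg}, since the associative operad is $\Sigma$-free and no equivariant cofibrancy hypotheses enter.) The remaining ingredients — the reduction to a single cell, the preservation of transfinite compositions by $(-)_{/A}$, and the closure properties of $\U_A$ — are routine.
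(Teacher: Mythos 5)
Your proposal is correct and follows essentially the same route as the paper's proof: reduce via weak saturation of $\U_A$ (preservation of retracts and transfinite compositions under $(-)_{/A}$) to a single free cell attachment $T(K)\to T(L)$, then lift the Schwede--Shipley filtration of~\cite[Lemma 6.2]{SS} to a tower of right $A$-modules and identify each transition map as a pushout of an iterated pushout-product of $f\colon K\to L$ tensored with a right $A$-module, using the symmetry to consolidate the interspersed copies of the base algebra into one tensor factor. The bookkeeping you flag — verifying that the tower really carries a compatible right $A$-module structure converging to $S_{/A}$ — is precisely where the paper spends its effort (the $W_n$, $Q_n$, $P^{\iota}_n$ construction), so you have correctly identified both the strategy and its main technical burden.
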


\begin{proof}
Since the operation $h \mapsto h_{/A}$ preserves retracts and transfinite compositions it will be enough to prove the claim for $h$ of the form
$$ \xymatrix{
 T(K) \ar^{T(f)}[rr]\ar^{g}[d] && T(L) \ar[d] \\
 R \ar^{h}[rr] && S \\
 & A \ar[ur]\ar^{\iota}[ul] & \\
}$$
where $f: K \lrar L$ is a trivial cofibration in $\M$ and the top square is a pushout square in $\Alg(\M)$.

We shall adapt the main construction in the proof of Lemma $6.2$ in~\cite{SS}, namely the tower
\begin{equation}\label{e:tower}
R = P_0 \x{\rho_0}{\lrar} P_1 \x{\rho_1}{\lrar} ... \lrar P_n \lrar ...
\end{equation}
whose colimit is the underlying object $U(S)$, to a tower of right $A$-modules. As in~\cite{SS}, we first define the auxiliary objects $Q_n$. Let $\P(\{1,...,n\})$ denote the cube category, i.e., the poset of subsets of $\{1,...,n\}$. Consider the functor
$$ W_n: \P(\{1,...,n\}) \lrar \RMod(A) $$
given by
$$ W_n(\Sig) = R \otimes C_1 \otimes R \otimes C_2 \otimes ... \otimes C_n \otimes R_{/A} $$
where
$$ C_i = \begin{cases} K \;\;\text{  if } i \notin \Sig \\ L \;\;\text{  if } i \in \Sig \end{cases} $$
In other words, the $A$-module structure on $W_n(\Sig)$ is given by multiplication with the right-most $R$-factor. We then set $Q_n$ to be the colimit of $W_n$ restricted to the punctured cube, i.e.,
$$ Q_n = \mathop{\colim}_{\Sig \subsetneq \{1,...,n\}} W_n(\Sig) \in \RMod(A) $$
We shall now define inductively an object $P^{\iota}_n \in \RMod(A)$ (such that $U(P^{\iota}_n)$ is $P_n$ in~\cite{SS}) together with a map
$$ Q_{n+1} \lrar P^{\iota}_n $$
For $n=0$ we set $P^{\iota}_0 = R_{/A}$ and the map
$$ Q_1 = R \otimes K \otimes R_{/A} \lrar R_{/A} $$
is given as the composite
$$
\xymatrix{
R \otimes K \otimes R_{/A} \ar[rr]^{\Id \otimes g^{\ad} \otimes \Id} &&
R \otimes R \otimes R_{/A} \ar[r] & R_{/A}  \\
}$$
where $g^{ad}: K \lrar R$ is the adjoint of $g$ in $\M$ and the last map multiplies all the $R$ factors (using the algebra structure of $R$). Now assume that $P^{\iota}_{n-1} \in \RMod(A)$ was given together with a map of right $R$-modules
$$ Q_n \lrar P^{\iota}_{n-1} $$
We define $P^{\iota}_n$ to be the pushout in $\RMod(A)$ of
$$ \xymatrix{
Q_n \ar[r]\ar[d] & W_n(\{1,...,n\}) \ar[d] \\
P^{\iota}_{n-1} \ar[r] & P^{\iota}_n \\
}$$
To define the desired map $Q_{n+1} \lrar P^{\iota}_n$ it will suffice to give a compatible collection of maps
$$ W_{n+1}(S) \lrar P^{\iota}_n $$
for $\Sig \subsetneq \{1,...,n+1\}$. Each of the factors of $W_{n+1}(\Sig)$ which is equal to $K$ is first mapped into $R$ via $g^{\ad}$. The adjacent factors of $R$ are then multiplied, yielding a map
$$ W_{n+1}(\Sig) \lrar  W_{|\Sig|}(\{1,...,|\Sig|\}) = R \otimes L \otimes R \otimes ... \otimes L \otimes R_{/A}  $$

The right-hand side then maps further to $P^{\iota}_{|\Sig|}$, and hence to $P^{\iota}_n$ since $|\Sig| \leq n$.

Now let $P^{\iota} = \colim_n P^{\iota}_n$ be the colimit of the tower in $\RMod(A)$. Since we have a compatible family of maps $\vphi_n: A_{/A} \lrar P_n$ we have an induced map $\vphi:A_{/A} \lrar P^{\iota}$. We claim that $P^{\iota}$ is isomorphic to $S_{/A}$ as a right $A$-module. To see this, recall that the forgetful functor
$$ U: \RMod(A) \lrar \M $$
commutes with colimits and so
$$ U(P^{\iota}) \cong \colim_n U(P^{\iota}_n) $$
According to the proof of Lemma $6.2$ in~\cite{SS} the colimit on the right hand side can be identified with the underlying object $U(S)$ of $S$ and in particular carries an algebra structure. Hence we conclude that
$$ U(P^{\iota}) \cong U(S) $$
In particular, the underlying object of $P^{\iota}$ does not depend on $\iota$. It is hence left to show that the $A$-module structure on $P^{\iota}$ coincides with the $A$-module structure of $S_{/A}$.

We now observe that the $A$-module structure on $P^{\iota}$ depends functorially on $\iota$, in the sense that if $f:A' \lrar A$ is a map of algebras then $P^{\iota \circ f}$ is obtained from $P^{\iota}$ by restricting the module structure along $f$. Since the $A$-module structure of $S_{/A}$ is functorial in $A$ in the same manner, we see that in order to prove that the $A$-module structures of $P^{\iota}$ and $S_{/A}$ coincide one can assume that $\iota$ is the identity $A \lrar A$.

We now need to prove that the $A$-module structure on $P^{\Id}$ factors through the algebra structure of $S$ in the sense that the following square
\begin{equation}\label{e:module}
\xymatrix{
U(P^{\Id}) \otimes U(A) \ar[r]\ar_{\Id \otimes U(\vphi)}[d] & U(P^{\Id}) \ar@{=}[d] \\
U(P^{\Id}) \otimes U(P^{\Id}) \ar[r] & U(P^{\Id}) \\
}
\end{equation}
commutes. Recall that the algebra structure on $\colim_n U(P^{\Id}_n)$ was given by a compatible family of graded products
$$ U(P^{\Id}_n) \otimes U(P^{\Id}_m) \lrar U(P^{\Id}_{n+m}) $$
The desired commutativity of~\ref{e:module} will now follow once we verify that the diagrams
$$ \xymatrix{
U(P^{\Id}_n) \otimes U(A) \ar[r]\ar_{\Id \otimes U(\vphi_m)}[d] & U(P^{\Id}_n) \ar[d] \\
U(P^{\Id}_n) \otimes U(P^{\Id}_m) \ar[r] & U(P^{\Id}_{n+m}) \\
}$$
are commutative. Since the graded products are compatible with each other it will suffice to prove for $m=0$, i.e., to show that the diagram
$$ \xymatrix{
U(P^{\Id}_n) \otimes U(A) \ar[r]\ar_{\Id \otimes U(\iota)}[d] & U(P^{\Id}_n) \ar[d] \\
U(P^{\Id}_n) \otimes U(A) \ar[r] & U(P^{\Id}_n) \\
}$$
is commutative. This in turn is true since both vertical morphisms are the identity. This concludes the promotion of the tower in~\cite{SS} to a tower of $A$-modules converging to the $A$-module $S_{/A}$.

Now recall that we wish to show that the map
$$ R_{/A} \lrar S_{/A} $$
is in $U_A$. In light of the above it will suffice to show that each
$$ \rho_n: P^{\iota}_n \lrar P^{\iota}_{n+1} $$
is in $U_A$. By the definition of $\rho_n$ as a pushout we see that it will be enough to show that the map
$$ Q_n \lrar W_n(\{1,...,n\}) $$
is in $U_A$. But this is due to the fact that we can write this map as
$$ \ovl{Q}_n \otimes M \lrar L^{\otimes n} \otimes M $$
where $M = R \otimes ... \otimes R \otimes R_{/A} \in \RMod(R)$ is a right $R$-module and $\ovl{Q}_n \lrar L^{\otimes n}$ is the iterated pushout-product of the trivial cofibration $f: K \lrar L$ with itself, hence a trivial cofibration.

\begin{proof}[Proof of Theorem~\ref{t:rmod}]
Let $R \lrar S$ be a trivial cofibration of algebras. Lemma~\ref{l:technical} applied to the case $R = A$ implies that the map $h_{/R}: R_{/R} \lrar S_{/R}$ belongs to $\U_R$. By Lemma~\ref{l:flat-equiv} the map $h_{/R}$ is a flat equivalence as desired.
\end{proof}

\end{proof}

\subsection{Modules over commutative algebras }\label{ss:calg}

Let $\M$ be a symmetric monoidal model category and let $\CAlg(\M)$ be the category of commutative algebra objects in $\M$ (i.e. objects equipped with a unital, associative and commutative multiplication). We have an adjunction
$$ \xymatrix@=13pt{\M\ar[rr]<1ex>^(0.4){F} && \CAlg(\M)\ar[ll]<1ex>_(0.6){\upvdash}^(0.6){U}}$$
where $U$ is the forgetful functor and $F$ is the free commutative algebra functor. When $\M$ is also a symmetric monoidal \textbf{model category}, it is natural to ask whether the model structure on $\M$ can be transferred to $\CAlg(\M)$ along the adjunction $F \dashv U$. This case is known to be more subtle than the analogous case of associative algebras. For model categories which model spectra, the answer is known to be negative for several prominent examples. In~\cite{Shi} a model structure on symmetric spectra was constructed, now known as the \textbf{positive flat} stable model structure, which could indeed be transferred along the adjunction $F \dashv U$. This was later generalized by Lurie (see~\cite{Lur11}) to model categories which are \textbf{free powered}. Lurie also showed that in this case the resulting model category $\CAlg(\M)$ models the $\infty$-category of $E_\infty$-algebra objects in $\M$. If one is only interested in the existence of the transferred model structure, weaker conditions were established by White (\cite{Wh}). However, one should be mindful that under the assumptions of~\cite{Wh} the comparison between the resulting model structure and its $\infty$-categorical analogue may fail in general, see Example~\ref{e:space} below.

The following definition is~\cite[Definition 4.4.4.2]{Lur11}.
\begin{define}\label{powerdef}
Let $\M$ be a symmetric monoidal model category. We will denote by
$$ \wedge^{n}(f): \Box^{n}(f) \lrar Y^{\otimes n} $$
the iterated pushout-product of $f$ with itself. Note that both $\Box^{n}(f)$ carry a natural action of the symmetric group $\Sig_n$. We will denote by
$$ \sigma^{n}(f): \Sym^{n}(Y; X) \lrar \Sym^{n}(Y) $$
the induced map on $\Sig_n$-coinvariants.
\end{define}

The main assumption used in~\cite{Wh} is the following:
\begin{define}
Let $\M$ be a symmetric monoidal model category. We will say that $\M$ satisfies the \textbf{commutative monoid axiom} if for every trivial cofibration $f: X \lrar Y$ in $\M$ and every $n > 0$ the corresponding map
$$ \sigma^{n}(f): \Sym^{n}(Y; X) \lrar \Sym^{n}(Y) $$
is a trivial cofibration.
\end{define}

The following theorem is essentially Theorem $3.2$ of~\cite{Wh}:
\begin{thm}\label{t:white}
Let $\M$ be a combinatorial symmetric monoidal model category which satisfies both the monoid axiom and the commutative monoid axiom. Then there exists a combinatorial model structure on $\CAlg(\M)$ such that a map $f: A \lrar B$ of commutative algebras is a weak equivalence (resp. fibration) if and only if $U(f)$ is a weak equivalence (resp. fibration) in $\M$.
\end{thm}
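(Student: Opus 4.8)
The plan is to obtain the model structure on $\CAlg(\M)$ by \textbf{transfer} along the free--forgetful adjunction
$$ \xymatrix@=13pt{\M\ar[rr]<1ex>^(0.4){F} && \CAlg(\M)\ar[ll]<1ex>_(0.6){\upvdash}^(0.6){U}}$$
using Kan's transfer theorem for cofibrantly generated model categories. First I would dispose of the formal points: the category $\CAlg(\M)$ is monadic over $\M$ via the accessible monad $X \mapsto \coprod_{n \geq 0}\Sym^{n}(X)$, so since $\M$ is locally presentable, $\CAlg(\M)$ is locally presentable as well; in particular it is bicomplete and $U$ preserves filtered colimits. Taking as candidate generating cofibrations and trivial cofibrations the sets $FI$ and $FJ$, where $I$ and $J$ generate the cofibrations and trivial cofibrations of $\M$, the smallness hypotheses of the transfer theorem hold automatically, since the domains of $I$ and $J$ are small in $\M$ and $U$ commutes with the relevant filtered colimits. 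Combinatoriality of the resulting model structure is then immediate. Hence the entire content of the theorem is the \textbf{acyclicity condition}: every relative $FJ$-cell complex in $\CAlg(\M)$ is carried by $U$ to a weak equivalence in $\M$.

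For acyclicity it suffices --- using that the class $\U_{\II}$ of Definition~\ref{d:U} is weakly saturated and, by the monoid axiom, contained in $\W_{\M}$, together with the fact that $F$ preserves coproducts --- to prove that a single pushout in $\CAlg(\M)$ of a map $F(j)$,
$$ \xymatrix{F(X) \ar[r]\ar_{F(j)}[d] & A \ar[d] \\ F(Y) \ar[r] & B }$$
with $j: X \lrar Y$ a trivial cofibration in $\M$, is carried by $U$ to a map lying in $\U_{\II}$. To this end I would invoke the standard filtration of such a pushout (as in~\cite{SS}, Lemma $6.2$, for associative algebras, adapted to the symmetric monoidal setting): there is a sequence of $A$-modules $A = B_{0} \lrar B_{1} \lrar B_{2} \lrar \cdots$ with $\colim_{n} B_{n} \cong U(B)$, in which each $B_{n-1} \lrar B_{n}$ is the pushout in $\M$ of the map
$$ A \otimes \sigma^{n}(j): A \otimes \Sym^{n}(Y;X) \lrar A \otimes \Sym^{n}(Y) $$
along an attaching map $A \otimes \Sym^{n}(Y;X) \lrar B_{n-1}$. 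Constructing this filtration, and in particular identifying the $n$-th attaching layer with $A \otimes \sigma^{n}(j)$, is the technical heart of the argument; it is precisely here that the $\Sig_{n}$-coinvariants of Definition~\ref{powerdef} enter and the argument genuinely departs from the associative case treated in \S\ref{ss:alg}.

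Granting the filtration, the \textbf{commutative monoid axiom} finishes the job: since $j$ is a trivial cofibration, each $\sigma^{n}(j): \Sym^{n}(Y;X) \lrar \Sym^{n}(Y)$ is a trivial cofibration in $\M$, whence $A \otimes \sigma^{n}(j)$ is one of the defining generators of $\U_{\II}$ and so lies in $\U_{\II}$. Being weakly saturated, $\U_{\II}$ is closed under pushouts and transfinite compositions, so each $B_{n-1} \lrar B_{n}$, and therefore the composite $U(A) \lrar \colim_{n} B_{n} = U(B)$, lies in $\U_{\II}$; by the monoid axiom it is a weak equivalence in $\M$. This establishes acyclicity, and the transfer theorem then yields the asserted combinatorial model structure on $\CAlg(\M)$, in which $f$ is a weak equivalence (resp.\ fibration) exactly when $U(f)$ is. Alternatively, once one matches the hypotheses above with those of~\cite{Wh}, Theorem $3.2$, the statement may simply be quoted from there.

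I expect the filtration step --- promoting the canonical presentation of the pushout $A \otimes_{F(X)} F(Y)$ to a tower of $A$-modules whose successive layers are governed by the maps $\sigma^{n}(j)$ --- to be the main obstacle; the remaining ingredients are either formal (local presentability and smallness of generators) or direct consequences of the monoid and commutative monoid axioms.
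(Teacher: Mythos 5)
The paper does not prove Theorem~\ref{t:white}; it simply records it as ``essentially Theorem~$3.2$ of~\cite{Wh}'', an option your final sentence already acknowledges. Your transfer argument --- local presentability and smallness of $FI$, $FJ$; reduction of acyclicity to a single pushout of $F(j)$; the filtration $A = B_0 \lrar B_1 \lrar \cdots$ with layers attached along $A \otimes \sigma^{n}(j)$; and the use of the commutative monoid axiom together with weak saturation of $\U_{\II}$ and the monoid axiom --- is a correct reconstruction of White's proof, and the filtration you invoke is exactly the one the paper itself reuses in the proof of Lemma~\ref{l:technical-2}.
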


\begin{example}\label{e:comp}
Let $k$ be a field of characteristic $0$. The category $\Ch(k)$ of (unbounded) chain complexes over $k$ with the projective model structure satisfies the assumptions of Theorem~\ref{t:white}.
\end{example}

\begin{example}\label{e:positive}
Let $\M$ be the category of symmetric spectra endowed with the \textbf{positive flat stable} model structure (see~\cite{Shi}). Then $\M$ satisfies the assumptions of Theorem~\ref{t:white} (see~\cite[Theorem $5.7$]{Wh}).
\end{example}

\begin{example}\label{e:space}
The category $\Set_\Del$ of simplicial sets with the Kan Quillen model structure satisfies the assumptions of Theorem~\ref{t:white} with respect to the Cartesian product. However, unlike the two examples above, $\Set_\Del$ does not satisfy the stronger conditions appearing in~\cite{Lur11}. In particular, as is well known, the resulting model category $\CAlg(\Set_\Del)$ is \textbf{not} a model for the $\infty$-category of $E_\infty$-monoids in spaces. 
\end{example}

For each commutative algebra object $A \in \CAlg(\M)$ one can consider the category $\Mod(A)$ of \textbf{$A$-modules} (since $A$ is commutative the categories of left modules and right modules coincides so one can just talk of modules). We have a similar adjunction
$$ \xymatrix@=13pt{\M\ar[rr]<1ex>^(0.4){A \otimes (-)} &&  \Mod(A)\ar[ll]<1ex>_(0.6){\upvdash}^(0.6){U}} $$
Recall the following theorem which is essentially taken from~\cite{SS}:
\begin{thm}\label{t:ssc}
Let $\M$ be a combinatorial symmetric monoidal model category satisfying the monoid axiom. Let $A \in \CAlg(\M)$ be a commutative algebra object. Then the category $\Mod(A)$ of $A$-modules can be endowed with a combinatorial model structure in which a map $f: M \lrar N$ of $A$-modules is a weak equivalence (resp. fibration) if and only if $U(f)$ is a weak equivalence (resp. fibration) in $\M$.
\end{thm}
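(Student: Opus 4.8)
The plan is to obtain the model structure on $\Mod(A)$ by \emph{transfer} along the free–forgetful adjunction $A\otimes(-):\M\lrar\Mod(A)$, $U:\Mod(A)\lrar\M$, following the proof of \cite[Theorem 4.1]{SS} (compare Theorem~\ref{t:ss}); since $A$ is commutative, $\Mod(A)$ is the same as the category of right $A$-modules, so the notation of \S\ref{ss:alg} applies. First I would record the purely categorical facts: $\Mod(A)$ is the category of algebras for the monad $A\otimes(-)$ on $\M$, this monad preserves all colimits (the tensor product of $\M$ is a left adjoint in each variable), hence $\Mod(A)$ is bicomplete with $U$ creating all limits and colimits, and moreover $\Mod(A)$ is locally presentable (being monadic over a locally presentable category via an accessible monad). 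In particular every object of $\Mod(A)$ is presentable, so the sets $I_A:=\{A\otimes i\mid i\in I\}$ and $J_A:=\{A\otimes j\mid j\in J\}$, with $I,J$ a choice of generating cofibrations and generating trivial cofibrations of $\M$, have presentable domains and the small object argument is available relative to them.

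Next I would invoke the standard transfer (Kan) recognition criterion: with these data $\Mod(A)$ carries a cofibrantly generated model structure, with generating cofibrations $I_A$ and generating trivial cofibrations $J_A$, in which $U$ creates weak equivalences and fibrations, \emph{provided} that $U$ sends every relative $J_A$-cell complex to a weak equivalence in $\M$. This last condition is the only non-formal point, and it is exactly where the monoid axiom is used. Because $U$ creates colimits, the pushout in $\Mod(A)$ of a map $A\otimes j:A\otimes K\lrar A\otimes L$ (with $j\in J$) along a map $A\otimes K\lrar M$ of $A$-modules has underlying map in $\M$ equal to the cobase change, along the corresponding map $A\otimes K\lrar U(M)$ of $\M$, of $\mathrm{id}_A\otimes j$, which by the symmetry of $\M$ is isomorphic (as an arrow) to $j\otimes A$. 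Since $j\otimes A$ is of the form (trivial cofibration)$\,\otimes\,$(object of $\M$), it lies in the generating family of $\U_{\II}$ (Definition~\ref{d:U} with $R=\II$), hence so does every cobase change and coproduct of such maps; and $\U_{\II}$, being weakly saturated (Definition~\ref{d:weakly saturated}), is closed under the transfinite compositions that build an arbitrary relative $J_A$-cell complex. Therefore $U$ carries every relative $J_A$-cell complex into $\U_{\II}$, and the monoid axiom forces it to be a weak equivalence in $\M$.

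Finally I would note that the resulting model structure is combinatorial, being cofibrantly generated by the sets $I_A,J_A$ on the locally presentable category $\Mod(A)$, and that the stated characterisation of weak equivalences and fibrations is immediate from the transfer, since both classes are created by $U$. The main obstacle — and the only step that genuinely uses the hypotheses on $\M$ — is the verification in the previous paragraph that $U$ sends relative $J_A$-cell complexes to weak equivalences; everything else is the standard locally presentable/cofibrantly generated bookkeeping. (This is a much simpler instance of the tower argument underlying Lemma~\ref{l:technical}; in fact no commutativity of $A$ is needed for the statement itself, only for the identification of left and right $A$-modules.)
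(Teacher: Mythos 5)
Your proposal is correct and reproduces the standard transfer argument of Schwede--Shipley, which is exactly what the paper relies on: the text offers no independent proof and simply cites \cite{SS} (Theorem 4.1), of which the commutative case is an instance once one observes that $\Mod(A)$ coincides with $\LMod(A)=\RMod(A)$. Your verification that $U$ carries relative $J_A$-cell complexes into $\U_\II$ (hence into weak equivalences by the monoid axiom) is the decisive step, and it is argued correctly: cobase changes of $\mathrm{id}_A\otimes j\cong j\otimes A$, together with weak saturation of $\U_\II$, do the work, and local presentability of $\Mod(A)$ supplies the smallness needed for the small object argument.
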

Any map of algebras $f: A \lrar B$ induces Quillen a adjunction
$$ \xymatrix@=13pt{
\Mod(A)\ar[rr]<1ex>^(0.5){B \otimes_A (-)} && \Mod(B)\ar[ll]<1ex>_(0.5){\upvdash}^(0.5){\res^B_A}
}$$
where $B \otimes_A M$ is given by the coequlizer of
$$ \xymatrix{
B \otimes A \otimes M \ar[r]<0.5ex>\ar[r]<-0.5ex> & B \otimes M \\
}$$
and $\res^B_A$ is the functor which restricts the action from $B$ to $A$. Our goal for the rest of this subsection is to show that the functor $A \mapsto \Mod(A)$ is proper is relative. Our strategy is similar to the case of associative algebras.

\begin{rem}
Given a map of commutative algebras $A \lrar B$ the object $B$ inherits canonical structure of
of an $A$-module, which we will denote by $B_{/A} \in \Mod(A)$.
\end{rem}

\begin{define}
Let $A$ be a commutative algebra object in $\M$. As in the case of associative algebras (see Definition~\ref{d:U}), we will denote by $U_A \subseteq \Mod(A)$ the smallest weakly saturated class of morphisms containing all the morphisms of the form
$$ f \otimes M: K \otimes M \lrar L \otimes M $$
where $f: K \lrar L$ is a trivial cofibration in $\M$ and $M \in \Mod(A)$ is an $A$-module.
\end{define}

The following lemma is the commutative analogue of Lemma~\ref{l:technical}, although its proof is much simpler. The commutative analogue of the Schwede-Shipley tower appeared in~\cite{Shi},~\cite{Lur11} and~\cite{Wh}.
\begin{lem}\label{l:technical-2}
Let $\M$ be a symmetric monoidal model category which satisfies the assumptions of~\ref{t:white}. Let
$$
\xymatrix{
A \ar^{h}[rr] &&  B \\
& C \ar[ur]\ar[ul] \\
}$$
be a commutative diagram in $\CAlg(\M)$ such that $h$ is a trivial cofibration of commutative algebras. Then the induced map
$$ h_{/C}: A_{/C} \lrar B_{/C} $$
is belongs to $U_A$.
\end{lem}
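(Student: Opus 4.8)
The plan is to mimic the argument of Lemma~\ref{l:technical} but using the much simpler commutative Schwede--Shipley-type tower, which in the commutative setting only requires the symmetric powers $\sigma^{n}(f)$ rather than a punctured-cube construction. Since the operation $h \mapsto h_{/C}$ commutes with retracts, transfinite compositions and pushouts (as these are all computed on underlying objects, and $U\colon \Mod(A)\lrar\M$ preserves and creates them), and since the class $U_A$ is weakly saturated, it suffices to verify the claim when $h$ is a pushout of a free map, i.e.\ when we have a pushout square in $\CAlg(\M)$

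\begin{equation*}
\xymatrix{
F(K) \ar^{F(f)}[rr]\ar^{g}[d] && F(L) \ar[d] \\
A \ar^{h}[rr] && B \\
}
\end{equation*}

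with $f\colon K\lrar L$ a trivial cofibration in $\M$, and where the maps to $C$ are understood (we may moreover reduce to $C = A$ exactly as in the proof of Lemma~\ref{l:technical}, by functoriality of the $A$-module structure on $B_{/A}$ in $A$).

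First I would recall the standard description of the underlying $A$-module of the pushout $B = A \otimes_{F(K)} F(L)$: it is the colimit of a tower
\begin{equation*}
A = P_0 \x{\rho_0}{\lrar} P_1 \x{\rho_1}{\lrar} \cdots \lrar P_n \lrar \cdots
\end{equation*}
in $\Mod(A)$, where $P_n$ is obtained from $P_{n-1}$ by a pushout along the map $A \otimes \Sym^{n}(L;K) \lrar A \otimes \Sym^{n}(L)$, i.e.\ along $\Id_A \otimes \sigma^n(f)$ (this is the module-level refinement of the tower used in~\cite{Shi},~\cite{Lur11},~\cite{Wh}; one checks as in the proof of Lemma~\ref{l:technical} that the $A$-module structures assemble correctly and that the colimit recovers $B_{/A}$, using that $U$ commutes with colimits). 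Granting this, the map $h_{/A}\colon A_{/A} \lrar B_{/A}$ is the transfinite composition of the $\rho_n$, so it is enough to show each $\rho_n \in U_A$; and since $U_A$ is closed under pushouts, it is enough to show that $\Id_A \otimes \sigma^n(f)\colon A \otimes \Sym^n(L;K) \lrar A \otimes \Sym^n(L)$ lies in $U_A$.

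For this last point I would use the commutative monoid axiom: by Theorem~\ref{t:white}'s hypothesis, $\sigma^n(f)$ is a trivial cofibration in $\M$ for every $n>0$. Tensoring with the $A$-module $A$ (viewed as the unit of $\Mod(A)$), the map $\Id_A \otimes \sigma^n(f)$ is precisely of the generating form $f'\otimes M$ with $f'$ a trivial cofibration in $\M$ and $M = A \in \Mod(A)$ — hence it belongs to $U_A$ by definition. This completes the argument. The main obstacle, as in Lemma~\ref{l:technical}, is not any single estimate but the bookkeeping needed to promote the underlying-object tower to a tower of $A$-modules converging to $B_{/A}$ as an $A$-module (rather than merely underlying object); however this is strictly easier here than in the associative case, since the commutative bar-type filtration is indexed simply by $n$ and its associated graded is built from the symmetric powers $\Sym^n(L;K)$, with no punctured cubes to manipulate.
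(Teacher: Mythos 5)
Your proof is essentially the paper's proof: reduce to the case of a single cell attachment (pushout of a free map), use the symmetric-power filtration $B_0 \to B_1 \to \cdots$ with each $B_{n-1} \to B_n$ a pushout of $A \otimes \sigma^n(f)$, and invoke the commutative monoid axiom to see that each $\sigma^n(f)$ is a trivial cofibration, hence $A \otimes \sigma^n(f)$ is a generator of the relevant class $U_{(-)}$. Two small remarks. First, the paper keeps the base algebra $C$ general and concludes $h_{/C} \in U_C$ (the ``$U_A$'' in the statement is a typo for $U_C$, consistent with the associative analogue in Lemma~\ref{l:technical}); your extra reduction to $C = A$ is not needed, though it is harmless since $\res^A_C$ carries $U_A$ into $U_C$ because restriction preserves pushouts, retracts and transfinite compositions and sends generators to generators. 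Second, the parenthetical ``as these are all computed on underlying objects'' is not right for pushouts in $\CAlg(\M)$ --- the entire point of the Schwede--Shipley tower is that $U: \CAlg(\M) \to \M$ does \emph{not} preserve pushouts --- but your actual reduction only uses closure under retracts and transfinite compositions, so no damage is done.
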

\begin{proof}
Since the operation $h \mapsto h_{/C}$ preserves retracts and transfinite compositions it will be enough to prove the claim for $h$ of the form
$$ \xymatrix{
 T(K) \ar^{T(f)}[rr]\ar^{g}[d] && T(L) \ar[d] \\
 A \ar^{h}[rr] && B \\
 & C \ar[ur]\ar^{\iota}[ul] & \\
}$$
where $f: K \lrar L$ is a trivial cofibration in $\M$ and the top square is a pushout square in $\CAlg(\M)$. As in the proof of Lemma~\ref{l:technical} the $C$-module $B_{/C}$ admits a filtration by $C$-modules
$$ A \simeq B_0 \lrar B_1 \lrar B_2 \lrar \ldots,$$
where $B \simeq \colim \{B_i\}$ and for each $n > 0$ there is a pushout diagram of $C$-modules
$$ \xymatrix{
A \otimes \Sym^{n}(L;K) \ar[rr]^{ A \otimes \sigma^{n}(i) } \ar[d] & & A \otimes \Sym^{n}(L) \ar[d] \\
B_{n-1} \ar[rr] & &  B_{n}.
}$$
Since $\M$ satisfies the commutative monoid axiom the maps $\sig^n(i)$ are trivial cofibrations in $\M$. By definition we then get that  $h_{/C}$ belongs to $U_C$.
\end{proof}

\begin{cor}\label{c:proper}
Let $\M$ be a symmetric monoidal model category which satisfies the assumptions Theorem~\ref{t:white}. Let
$$ h: A \lrar B $$
be a trivial cofibration of commutative algebras. Then the map $A_{/A} \lrar B_{/A}$ is a flat equivalence (see Definition~\ref{d:flat}).
\end{cor}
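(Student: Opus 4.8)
The plan is to run the commutative version of the argument already used to prove Theorem~\ref{t:rmod}, since the two substantive ingredients have by now been packaged as Lemmas~\ref{l:technical-2} and~\ref{l:flat-equiv}. First I would apply Lemma~\ref{l:technical-2} in the degenerate case $C = A$, taking $\iota$ to be the identity map $A \lrar A$. This yields directly that the induced map $h_{/A}: A_{/A} \lrar B_{/A}$ belongs to the weakly saturated class $U_A \subseteq \Mod(A)$.

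Next I would invoke Lemma~\ref{l:flat-equiv} to upgrade membership in $U_A$ to being a flat equivalence. The hypotheses of Theorem~\ref{t:white} include the monoid axiom, so Lemma~\ref{l:flat-equiv} is available: every morphism of $U_A$ is a flat equivalence. (One small bookkeeping point: Lemma~\ref{l:flat-equiv} is phrased for $\U_R \subseteq \RMod(R)$, but for a commutative algebra $A$ one has $\Mod(A) = \RMod(A)$, and the class $U_A$ of the present subsection is literally the class $\U_A$ of Definition~\ref{d:U} under this identification, so the lemma transfers verbatim; alternatively one re-runs its one-line proof with $\Mod(A)$ in place of $\RMod(R)$.) Combining the two steps gives that $h_{/A}$ is a flat equivalence, which is precisely the assertion of the corollary.

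Since the hard work has been localized in the two lemmas, I do not expect a genuine obstacle here. The only things requiring a moment of care are, first, the compatibility of the associative and commutative conventions for the class $U$ just mentioned, and second, unwinding Definition~\ref{d:flat} in the commutative setting --- where ``flat equivalence'' simply means that $M \otimes_A O \lrar N \otimes_A O$ is a weak equivalence in $\M$ for every $A$-module $O$ --- so that it is exactly the conclusion Lemma~\ref{l:flat-equiv} delivers for members of $U_A$.
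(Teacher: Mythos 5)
Your proposal matches the paper's proof exactly: the paper's argument for Corollary~\ref{c:proper} is precisely to specialize Lemma~\ref{l:technical-2} to $C=A$ and then invoke Lemma~\ref{l:flat-equiv} (which applies since for commutative $A$ one has $\Mod(A)=\RMod(A)$ and the classes $U_A$ coincide). Your bookkeeping remarks are accurate but not needed; the core reduction is the same.
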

\begin{proof}
This follows directly from Lemma~\ref{l:technical-2} and Lemma~\ref{l:flat-equiv}.
\end{proof}

\begin{define}
We will say that a symmetric monoidal model category $\M$ is \textbf{commutatively flat} if it satisfies the following property: for every commutative algebra object $A$ and every cofibrant $A$-module $M$, the operation $(-) \otimes_A M$ takes weak equivalences of $A$-modules to weak equivalences in $\M$.
\end{define}

\begin{thm}\label{t:rel-prop-com}
Let $\M$ be a symmetric monoidal model category which satisfies the assumptions of Theorem~\ref{t:white}. Assume in addition that $\M$ is commutatively flat. Then the functor $\Mod(-): \M \lrar \ModCat$ is relative and proper.
\end{thm}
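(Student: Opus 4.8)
The plan is to prove relativeness and properness separately, reusing the work already done for associative algebras.

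For \textbf{relativeness}, I would mimic the proof of Theorem~\ref{t:rel-prop}. Since the restriction functors $\res^B_A$ always preserve and reflect weak equivalences, it suffices to show that for any weak equivalence of commutative algebras $f : A \lrar B$ and any cofibrant $A$-module $M$, the unit map $M \lrar \res^B_A(B \otimes_A M)$ is a weak equivalence. Factor $f$ as a trivial cofibration followed by a fibration; by two-out-of-three we may treat the two cases separately. For $f$ a trivial cofibration, Corollary~\ref{c:proper} tells us that $A_{/A} \lrar B_{/A}$ is a flat equivalence of $A$-modules, so $B_{/A} \otimes_A M \lrar A_{/A} \otimes_A M = M$ is a weak equivalence for \emph{every} $A$-module $M$ (not just cofibrant ones), which handles the unit map. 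For $f$ an acyclic fibration, one uses the commutative-flatness hypothesis: for cofibrant $M$, the operation $(-)\otimes_A M$ takes weak equivalences of $A$-modules to weak equivalences, and combined with a Ken-Brown-style argument this gives that $B\otimes_A M$ has the correct homotopy type. In fact the cleanest route is: relativeness for trivial cofibrations plus relativeness for trivial fibrations, glued via the factorization, exactly as the Schwede--Shipley argument does in the associative case.

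For \textbf{properness}, I would again follow Theorem~\ref{t:rel-prop} essentially verbatim. Since $\Mod(-)$ is its own left and right module theory (commutativity), I only need to check left properness, i.e.\ that whenever $h : A \lrar B$ is a trivial cofibration of commutative algebras, the left Quillen functor $B \otimes_A (-) : \Mod(A) \lrar \Mod(B)$ preserves weak equivalences. By Corollary~\ref{c:proper} the map $A_{/A} \lrar B_{/A}$ is a flat equivalence of $A$-modules, hence for every $A$-module $M$ the map $A_{/A}\otimes_A M \lrar B_{/A}\otimes_A M$, i.e.\ $M \lrar \res^B_A(B\otimes_A M)$, is a weak equivalence in $\Mod(A)$. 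Thus if $M \lrar N$ is a weak equivalence of $A$-modules, then in the commutative square comparing $M \to \res^B_A(B\otimes_A M)$ with $N \to \res^B_A(B\otimes_A N)$ the vertical maps are weak equivalences, hence $\res^B_A(B\otimes_A M) \lrar \res^B_A(B\otimes_A N)$ is a weak equivalence by two-out-of-three, and since $\res^B_A$ reflects weak equivalences, $B\otimes_A M \lrar B\otimes_A N$ is one too. Right properness is automatic since $\res^B_A$ always preserves weak equivalences.

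The main obstacle is the trivial-fibration half of relativeness: Corollary~\ref{c:proper} only gives us information about trivial \emph{cofibrations} of algebras, so one cannot directly conclude that $B\otimes_A(-)$ behaves well when $A \lrar B$ is merely an acyclic fibration. Here the commutative-flatness assumption is exactly what is needed — one writes the relevant comparison in terms of tensoring a weak equivalence of modules against a cofibrant module and invokes a Ken-Brown lemma to reduce to the cofibrant-module case. I expect this to parallel the treatment in~\cite{SS} closely enough that no genuinely new ideas are required beyond the bookkeeping already set up in Lemma~\ref{l:technical-2}, Lemma~\ref{l:flat-equiv} and Corollary~\ref{c:proper}.
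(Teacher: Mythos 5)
Your argument for properness coincides with the paper's: apply Corollary~\ref{c:proper} to obtain a flat equivalence $A_{/A} \to B_{/A}$, deduce that the unit $M \to \res^B_A(B \otimes_A M)$ is a weak equivalence for every $M$, and conclude by two-out-of-three together with the fact that $\res^B_A$ reflects weak equivalences.

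For relativeness the paper takes a shortcut that you did not have available: it simply cites~\cite[Theorem 4.1]{Wh}, which is exactly this statement. Your direct sketch reaches a correct conclusion but is more complicated than it needs to be. There is no need to factor $f$ into a trivial cofibration followed by a trivial fibration, and no Ken Brown-style argument is required: commutative flatness alone suffices. If $f : A \to B$ is any weak equivalence of commutative algebras, then $A_{/A} \to B_{/A}$ is a weak equivalence of $A$-modules (it has the same underlying map as $f$). For a cofibrant $A$-module $M$, commutative flatness says $(-) \otimes_A M$ sends weak equivalences of $A$-modules to weak equivalences, so $M \cong A_{/A} \otimes_A M \to B_{/A} \otimes_A M = \res^B_A(B \otimes_A M)$ is a weak equivalence. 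Since $\res^B_A$ preserves and reflects weak equivalences, this already exhibits $B \otimes_A (-) \dashv \res^B_A$ as a Quillen equivalence. In particular, Corollary~\ref{c:proper} (which controls only trivial \emph{cofibrations} of algebras) is needed solely for the properness half, not for relativeness, and the case split you describe, while not wrong, buys you nothing.
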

\begin{proof}
The fact that under these assumptions the functor $\Mod(-)$ is relative is proved in~\cite[Theorem 4.1]{Wh}. Let us now show that $\Mod(-)$ is proper. 

Since all restriction functors preserve weak equivalences we can focus attention on the left Quillen functors. Let $f: A \lrar B$ be a trivial cofibration of commutative algebras. According to Corollary~\ref{c:proper} the map $A_{/A} \lrar B_{/A}$ is flat equivalence. This implies that the unit map
$$ M \lrar \res^B_A\left(B \otimes_A M\right) $$
is a weak equivalence for every $M$. Since $\res^B_A$ reflects weak equivalences this implies that $B_{/A} \otimes_A (-)$ preserves weak equivalences.
\end{proof}

\begin{cor}\label{c:com}
Let $\M$ be a commutatively flat symmetric monoidal model category which satisfies the assumptions of Theorem~\ref{t:white}. Then there exist a model structure on
$$ \int_{A \in \CAlg(\M)} \Mod(A) $$
such that the projection
$$ \int_{A \in \CAlg(\M)} \Mod(A) \lrar \CAlg(\M) $$
is a model fibration.
\end{cor}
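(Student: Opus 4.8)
The plan is to assemble this corollary directly from the machinery already in place, exactly parallel to the associative case treated in the corollary following Theorem~\ref{t:rel-prop}. First I would invoke Theorem~\ref{t:white}: since $\M$ is a combinatorial symmetric monoidal model category satisfying both the monoid axiom and the commutative monoid axiom (these being precisely ``the assumptions of Theorem~\ref{t:white}''), the category $\CAlg(\M)$ carries a combinatorial model structure, transferred from $\M$ along the free--forgetful adjunction $F \dashv U$, in which weak equivalences and fibrations are detected by $U$. In particular $\CAlg(\M)$ is a bona fide model category, hence a pre-model category in the sense of \S\ref{s:model-fib}.

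Next I would note that the hypotheses of the corollary are exactly those of Theorem~\ref{t:rel-prop-com}: $\M$ satisfies the assumptions of Theorem~\ref{t:white} and is commutatively flat. Thus Theorem~\ref{t:rel-prop-com} applies and shows that the functor
$$ \Mod(-): \CAlg(\M) \lrar \ModCat $$
is a \textbf{proper relative} functor, where the functoriality in the algebra variable is the one recorded just before Remark~\ref{r:unit}, sending a map of commutative algebras $f: A \lrar B$ to the Quillen adjunction $B \otimes_A (-) \dashv \res^B_A$.

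With these two inputs the corollary is immediate. Applying Theorem~\ref{model structure} to the model category $\CAlg(\M)$ and the proper relative functor $\Mod(-)$ produces the integral model structure on $\int_{A \in \CAlg(\M)} \Mod(A)$, with weak equivalences, fibrations and cofibrations as in Definition~\ref{d:model}. Finally, Theorem~\ref{t:mod-fib} (applied with the pre-model category $\CAlg(\M)$, which here happens to be an honest model category) shows that the projection
$$ \pi: \int_{A \in \CAlg(\M)} \Mod(A) \lrar \CAlg(\M) $$
is a model fibration; the relative model category structure witnessing this is, as observed in the proof of Theorem~\ref{t:mod-fib}, the one extracted from the proof of Theorem~\ref{model structure}.

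Since each step is a direct citation, there is no real obstacle in this final corollary itself; the genuine work lies upstream, in Theorem~\ref{t:rel-prop-com} (hence in Lemma~\ref{l:technical-2} and the commutative Schwede--Shipley filtration) and in checking that the ambient examples --- e.g.\ $\Ch(k)$ for $k$ of characteristic $0$, or symmetric spectra with the positive flat stable model structure --- are commutatively flat and satisfy the commutative monoid axiom. The only points that need a moment of care are that the hypothesis ``commutatively flat'' is used on the nose in Theorem~\ref{t:rel-prop-com}, and that $\CAlg(\M)$ is combinatorial so that Theorem~\ref{model structure} carries no hidden smallness issue; both are delivered by Theorem~\ref{t:white}.
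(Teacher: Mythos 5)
Your proof is correct and follows exactly the route the paper intends (the paper leaves this corollary without an explicit proof, the implicit argument being precisely your chain: Theorem~\ref{t:white} gives the model structure on $\CAlg(\M)$, Theorem~\ref{t:rel-prop-com} gives properness and relativity of $\Mod(-)$, and Theorem~\ref{model structure} together with Theorem~\ref{t:mod-fib} deliver the integral model structure and the model fibration property). One minor remark: your closing worry about combinatoriality is unnecessary, since Theorem~\ref{model structure} requires no smallness hypotheses beyond bicompleteness, which is supplied by Proposition~\ref{p:cocomplete}.
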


\begin{example}
Examples~\ref{e:comp},~\ref{e:positive} and~\ref{e:space} all satisfy the assumptions of Corollary~\ref{c:com}.
\end{example}


\begin{thebibliography}{Lur09}
\bibitem[Bar]{Bar} C.~Barwick, \emph{On left and right model categories and left and right Bousfield localizations}, Homology, Homotopy Appl. 12 no. 2, 245--320, (2010).

\bibitem[Ber]{Be} J.~Bergner, \emph{Homotopy limits of model categories and more general homotopy theories}, Bulletin of the London Mathematical Society 44.2, p.311--322, (2012).

\bibitem[BM]{BM} C.~Berger and I.~Moerdijk \emph{Resolutions of coloured operads and rectification of homotopy algebras}, Categories in algebra, geometry and mathematical physics, 31--58, Contemp. Math , 431, Amer. Math. Soc., Providence RI (2007).

\bibitem[DDK]{DDK} E.~Dror, W.~Dwyer and D.M.~Kan,
\emph{Equivariant maps which are self homotopy equivalences}, Proceedings of the American Mathematical Society, 80, No. 4, p. 670--672 (1980).

\bibitem[EM]{EM} A.~D.~Elmendorf and M.~A.~Mandell, \emph{Ring, modules and algebras in infinite loop space theory}, Adv. Math., 205(1), 163--228 (2006).


\bibitem[GHN]{GHN}
D.~Gepner, R.~Haugseng, T.~Nikolaus, \emph{Lax colimits and free fibrations in $\infty$-categories}, preprint arXiv:1501.02161 (2015).


\bibitem[GJ]{GJ} P.~Goerss and R.~Jardine, \emph{Simplicial Homotopy Theory}, Vol. 174. Springer (2009).

\bibitem[Gra]{Gra} J.~W.~Gray \emph{Formal category theory: adjointness for 2-categories}, Lecture Notes in Mathematics 391, Springer-Verlag (1974).

\bibitem[Gro]{SGA1} 
A. Grothendieck, \emph{Rev\^etements \'etales et groupe fondamental}, Institut des Hautes Etudes Scientifiques (1964).


\bibitem[GS]{GS} J.~P.~C.~Greenlees, B.~Shipley, \emph{Homotopy theory of modules over diagrams of rings}, preprint arXiv:1309.6997 (2013).



\bibitem[Hin]{Hin} V.~Hinich, \emph{Dwyer-Kan localization revisited}, preprint arXiv:1311.4128 (2013).

\bibitem[Hir]{Hir} P.~S.~ Hirschhorn, \emph{Model categories and their localizations}, Mathematical Surveys and Monographs, 99. American Mathematical Society, Providence, RI, xvi+457 pp. (2003).

\bibitem[HP]{HP} Y.~Harpaz, M.~Prasma, \emph{Strict versus coherent global equivariant homotopy theory}, In preparation.

\bibitem[Lur09]{Lur09} J.~Lurie, \emph{Higher topos theory}, No. 170. Princeton University Press, (2009).

\bibitem[Lur11]{Lur11} J.~Lurie, {Higher Algebra}, preprint, available at
\href{http://www.math.harvard.edu/~lurie/papers/higheralgebra.pdf}{Author's Homepage} (2011).


\bibitem[Mac]{Mac} S.~MacLane, \emph{Categories for the Working Mathematician}, Graduate Texts in Mathematics, Springer-Verlag (1971).


\bibitem[MM]{Ieke}
S.~MacLane, I.~Moerdijk, \emph{Sheaves in geometry and logic: A first introduction to topos theory}, Springer (1992).

\bibitem[Pra]{Pra} M.~Prasma, \emph{Segal group actions}, preprint arxiv:1311.4749. 

\bibitem[Qui]{Qui} D.~G.~Quillen, \emph{Homotopical Algebra}, Lecture Notes in Math., Vol. 43, Springer-Verlag, New York (1967).

\bibitem[Roi]{Ro} A.~Roig, \emph{Model category structures in bifibred categories}, Journal of Pure and Applied Algebra 95.2, p.203--223, (1994).

\bibitem[HS]{HS} A.~Hirschowitz and C.~Simpson, \emph{Descente pour les n-champs (Descent for n-stacks)}, preprint arXiv:980.7049 (1998).

\bibitem[Shi]{Shi} B.~Shipley, \emph{A convenient model category for commutative ring spectra}, Contemporary Mathematics 346, p. 473--484, (2004).

\bibitem[SS]{SS} S.~Schwede and B.~E.~Shipley, \emph{Algebras and modules in monoidal model categories},
Proc. London Math. Soc. (3) 80 , no. 2, 491--511 (2000).

\bibitem[Sta]{St} A.~E.~Stanculescu, \emph{Bifibrations and weak factorisation systems}, Applied Categorical Structures 20.1, p.19--30, (2012).

\bibitem[Str]{Str} R. Street, \emph{Fibrations in bicategories}, Cahiers de Topologie et Géométrie Différentielle Catégoriques 21.2 (1980), p.111--160.

\bibitem[Kro]{Kro}
Kro, Tore August, \emph{Model structure on operads in orthogonal spectra}, Homology, Homotopy and Applications 9.2, 2007, p. 397--412.

\bibitem[To\"{e}]{To} B.~To\"{e}n, \emph{Derived Hall algebras}, Duke Mathematical Journal 135.3, p.587--615, (2006).

\bibitem[Whi]{Wh} D.~White, \emph{Model Structures on Commutative Monoids in General Model Categories}, preprint arXiv:1403.6759 (2014).


\end{thebibliography}
\end{document}